\documentclass{amsart}
\usepackage{amsfonts, amsmath, amssymb, mathrsfs}
\usepackage{amsthm, url}
\usepackage{graphicx,epstopdf}
\usepackage{mathdots}
\usepackage{subfig}
\usepackage{color}
\usepackage{bbm}
\usepackage{hyperref}
\usepackage[fit]{truncate}
\usepackage{paralist}

\theoremstyle{plain}
\newtheorem{Theorem}{Thm}[section]
\newtheorem{Thm}[Theorem]{Theorem}
\newtheorem{Lem}[Theorem]{Lemma}
\newtheorem{Cor}[Theorem]{Corollary}
\newtheorem{Prop}[Theorem]{Proposition}
\newtheorem{Ques}[Theorem]{Questions}
\newtheorem*{Thm*}{Theorem}
\newtheorem*{Lem*}{Lemma}
\newtheorem*{Rem*}{Remark}

\theoremstyle{definition}
\newtheorem{Def}[Theorem]{Definition}
\newtheorem{Exm}[Theorem]{Example}
\newtheorem{Rem}[Theorem]{Remark}

\newtheorem{theorem}{Theorem}

\theoremstyle{definition}

\newtheorem{exampleth}[theorem]{Example}

\DeclareMathOperator{\codim}{codim}

\DeclareMathOperator{\conv}{conv}

\DeclareMathOperator{\Mat}{Mat}
\DeclareMathOperator{\Sym}{Sym}
\DeclareMathOperator{\Herm}{Herm}

\DeclareMathOperator\dd{d}
\DeclareMathOperator\im{im}
\DeclareMathOperator\rank{rank}
\DeclareMathOperator\disc{Disc}

\newcommand\wh{\widehat}
\newcommand\ol{\overline}

\newcommand\scp[1]{\langle #1\rangle}

\newcommand{\R}{\mathbb{R}}
\newcommand{\Q}{\mathbb{Q}}

\newcommand{\Z}{\mathbb{Z}}

\renewcommand\P{\mathbb{P}}
\newcommand{\C}{\mathbb{C}}
\newcommand{\V}{\mathcal{V}}

\newcommand\x{\underline x}
\newcommand\m{m}
\DeclareMathOperator\new{Newt}

\DeclareMathOperator{\G}{\mathcal{G}}
\DeclareMathOperator{\GS}{\mathcal{G}^+}
\DeclareMathOperator{\h}{\mathcal{H}}
\DeclareMathOperator{\HS}{\mathcal{H}^+}

\renewcommand\Re{{\rm Re}}
\renewcommand\Im{{\rm Im}}

\title{Gram Spectrahedra}
\author{Lynn Chua}
\address{University of California, Berkeley, CA, USA}
\email{chualynn@berkeley.edu }
\author{Daniel Plaumann}
\address{Technische Universit\"at Dortmund, Dortmund, Germany }
\email{Daniel.Plaumann@math.tu-dortmund.de}
\author{Rainer Sinn}
\address{
Georgia Institute of Technology, Atlanta, GA, USA }
\email{rsinn3@math.gatech.edu}
\author{Cynthia Vinzant}
\address{
North Carolina State University, Raleigh, NC, USA }
\email{clvinzan@ncsu.edu}

\begin{document}
\maketitle

\begin{abstract}
  Representations of nonnegative polynomials as sums of squares are
  central to real algebraic geometry and the subject of active
  research. The sum-of-squares representations of a given polynomial
  are parametrized by the convex body of positive semidefinite Gram
  matrices, called the Gram spectrahedron. This is a fundamental
  object in polynomial optimization and convex algebraic geometry. We
  summarize results on sums of squares that fit naturally into the
  context of Gram spectrahedra, present some new results, and highlight
  related open questions. We discuss sum-of-squares representations
  of minimal length and relate them to Hermitian 
  Gram spectrahedra and point evaluations on toric varieties.
\end{abstract}

\section*{Introduction}
The relationship between nonnegative polynomials and sums of squares is a
beautiful subject in real algebraic geometry. It was greatly influenced in recent years by connections to
polynomial optimization through the theory of moments (see
\cite{BPTMR3075433, Marshall08, Sc09}). A polynomial $f\in \R[\x]$ is
\emph{nonnegative} if $f(x)\geq 0$ for all $x\in \R^n$ and is a
\emph{sum of squares} if $f = p_1^2 + \hdots + p_r^2$ for some
$p_1, \hdots, p_r$ in $\R[\x]$. Clearly, a sum of squares is
nonnegative, but not all nonnegative polynomials are sums of squares.
The study of the relationship between these two notions goes back to
Hilbert \cite{HilbertMR1510517}. Determining the nonnegativity of a
polynomial in more than one variable is computationally difficult in general, 
already for general polynomials of degree four. Writing a polynomial as a sums of squares, on the
other hand, provides a computationally tractable certificate for
nonnegativity. Testing whether a polynomial is a sum of squares is
equivalent to testing the feasibility of a \emph{semidefinite
 program}. In fact, the set of sum-of-squares representations of a
fixed polynomial $f\in \R[\x]$ is naturally written as a
\emph{spectrahedron}, i.e.~the intersection of the cone of
positive semidefinite matrices with an affine-linear space. This
spectrahedron is the \emph{Gram spectrahedron} of $f$ and is the
central object of this paper.

Representations of a polynomial as a sum of squares are far from
unique. For example,
$x^2 + y^2 = (\cos(\theta) x - \sin(\theta) y)^2 + (\sin(\theta) x+
\cos(\theta) y)^2$
for any $\theta \in [0,2\pi]$. More generally, the orthogonal group
in dimension $r$ acts on representations of a polynomial as a sum of
$r$ squares. It was noted in \cite{CLR} that \emph{Gram matrices}
give natural representatives for each orbit of sum-of-squares
representations under this action. The Gram spectrahedron of a
polynomial $f$ is the set of all its positive semidefinite Gram matrices, a parameter space for the sum-of-squares representations of $f$.
So $f$ is a sum of squares if and only if its Gram spectrahedron
is nonempty.
More precisely, a polynomial $f$ has a representation as a sum of $r$ squares if and only its Gram spectrahedron contains a matrix of rank at most $r$.

The aim of this paper is to survey the literature on sum-of-squares
representations and Gram spectrahedra, present some new results, 
and highlight related open questions. We are
particularly interested in the convex algebro-geometric properties of
Gram spectrahedra. This includes the basic geometric properties, the
ranks of extreme points, and the corresponding algebraic degree of
optimization.

One fundamental question is: What is the minimum rank of a positive
semidefinite Gram matrix of a polynomial? We consider this question 
for generic polynomials with a fixed \emph{Newton polytope}.
We describe a recent result due to Blekherman, Smith, and Velasco, 
which produces an explicit characterization of the Newton 
polytopes for which all nonnegative polynomials are sums of squares 
(Theorem~\ref{thm:NewtonPolytopes}). In these
cases, the minimum rank of a Gram matrix is known
\cite{BPSV2016arXiv160604387B}. In this paper, we extend this analysis
to determine the shortest sum-of-squares representations over
varieties of \emph{almost} minimal degree
(Theorem~\ref{Thm:ACMalmost}).

The algebraic degree of optimization over a spectrahedron is a measure
of the size of the field extension needed to write the point
maximizing a linear function. These degrees were studied and computed
for general spectrahedra in \cite{algDegGH} and \cite{algDegNRS}. For
Gram spectrahedra, these degrees are not well-understood. We discuss
some small cases. The algebraic degree of optimization over Gram
spectrahedra of ternary quartics was studied numerically in
\cite{PSV}. For Gram spectrahedra of binary sextics, this algebraic
degree is small and we can write the optimal point in radicals. This
involves using the classical theory of Kummer surfaces to write down
an explicit formula for the dual surface to the boundary of the Gram
spectrahedron.

The paper is organized as follows. In Section~\ref{sec:Gram}, we
introduce basic definitions and properties of Gram spectrahedra, 
their relation to sum-of-squares representations, and requisite field extensions.
Connections with toric varieties and the consequences 
of the results in \cite{BSVMR3486176} for
nonnegative polynomials with a given Newton polytope 
are described in Section~\ref{sec:toric}. 
Section~\ref{sec:Ranks} is devoted to the ranks of extremal matrices
in Gram spectrahedra, with a special focus on those of minimum rank. An original result determines the shortest sum-of-squares representations
of quadratic forms on varieties of almost minimal degree. Gram spectrahedra of binary
forms and ternary quartics are explored in greater depth in
Section~\ref{sec:Examples}. Section~\ref{sec:Hermitian} describes a
Hermitian analogue of the Gram spectrahedron that characterizes
Hermitian sum-of-squares representations. We conclude in
Section~\ref{sec:Biquadratic} with a description of open
questions connecting the sum-of-squares length of a specific family of
polynomials to questions in topology and the computational complexity of the permanent.

\medskip \textbf{Acknowledgements.} We would especially like to thank
Claus Scheiderer and Bernd Sturmfels, from whom we have learned much
of the theory presented here. We are grateful to Greg Blekherman,
Thorsten Mayer, Mateusz Micha\l{}ek, Bruce Reznick, Emmanuel Tsukerman, and Mauricio Velasco for helpful
discussions. We also thank Friedrich Knop for answering a question on
toric varieties on \textit{Mathoverflow}. Lynn Chua was supported by a
UC Berkeley Graduate Fellowship and the Max Planck Institute for
Mathematics in the Sciences, Leipzig. Daniel Plaumann received
financial support from the Zukunftskolleg of the University of
Konstanz and DFG grant PL 549/3-1. Rainer Sinn was partially supported
by NSF grant DMS-0757212.  Cynthia Vinzant received support from the
NSF (DMS-1204447) and the FRPD program at N.C. State.

\section{Gram Matrices and Sums of Squares}\label{sec:Gram}
In this section, we outline the Gram matrix method introduced in \cite{CLR}.
Given a real polynomial $f\in\R[\x]$ in $n$ variables $\x=(x_1,\dots,x_n)$,
the Newton polytope of $f$ is the convex hull of all exponents of
monomials occurring in $f$ with non-zero coefficient and is denoted
$\new(f)$. We fix a polytope $P\subset\R^n$ with vertices in $\Z_{\geq
 0}^n$ and write $\R[\x]_{2P}$ for the vector space of all
polynomials $f\in\R[\x]$ whose Newton polytope is contained in $2P$.
We write $\Sigma_{2P}$ for the convex cone of all polynomials in $\R[\x]_{2P}$ that are sums of squares.
When $P$ is the scaled simplex, $d \Delta_{n-1} = \{ \alpha \in \R_{\geq 0}^n : \sum_i \alpha_i =d\}$, we will 
write $\R[\x]_{d\Delta} = \R[\x]_d$.  
The study of sums of squares via their Gram matrices and Newton polytopes was pioneered by 
Choi, Lam, and Reznick in \cite{CLR}, who observed the following. 

\begin{Prop}[{see \cite[Theorem~3.5]{CLR}}]\label{Prop:newton}
If $\new(p_1^2+\cdots+p_r^2)$ is contained in $2P$ for some $p_1,\dots,p_r\in\R[\x]$, 
then $\new(p_i)$ is contained in $P$ for all $i=1,\dots,r$.
\end{Prop}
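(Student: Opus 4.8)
The plan is to reduce the statement to a convexity fact about Newton polytopes of products and then exploit the special structure of squares. First I would recall the standard fact that for any $p,q\in\R[\x]$ one has $\new(pq)=\new(p)+\new(q)$, the Minkowski sum, because the vertices of $\new(p)+\new(q)$ are sums $v+w$ of vertices $v\in\new(p)$, $w\in\new(q)$ uniquely, so the corresponding coefficient of $pq$ is a product of nonzero leading coefficients and cannot cancel. In particular $\new(p_i^2)=2\,\new(p_i)$. Next I would observe that for a sum of squares there can be no cancellation among the ``extreme'' monomials: if $v$ is a vertex of $\new(p_i)$ for some $i$, then $2v$ is a vertex of $\new(p_i^2)$, and the coefficient of the monomial $\x^{2v}$ in $p_i^2$ is the square of the coefficient of $\x^v$ in $p_i$, hence strictly positive. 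Summing over $i$, such positive contributions cannot cancel against the other terms, so $2v\in\new(f)$ where $f=p_1^2+\cdots+p_r^2$.

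From here the argument is a short convexity computation. We want to show $\new(p_i)\subseteq P$ for each fixed $i$; since $\new(p_i)$ is the convex hull of its vertices, it suffices to show every vertex $v$ of $\new(p_i)$ lies in $P$. By the previous paragraph $2v\in\new(f)\subseteq 2P$, hence $v\in P$. This already finishes the proof, so the ``main obstacle'' is quite mild; the only point requiring care is the no-cancellation claim, i.e.\ that a vertex of some $\new(p_i)$ survives in $f$.

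Let me make that step precise, since it is the crux. Fix $i$ and a vertex $v$ of $\new(p_i)$, and choose a linear functional $\ell\in\R^n$ such that $\ell$ attains its strict maximum over $\new(p_i)$ at $v$. Then among all monomials appearing in $p_i^2$, the functional $2\ell$ attains its maximum over $\new(p_i^2)=2\,\new(p_i)$ uniquely at $2v$, and the $\x^{2v}$-coefficient of $p_i^2$ equals the square of the (nonzero) $\x^v$-coefficient of $p_i$, hence is positive. For any other index $j$, $2\ell$ attains its maximum over $\new(p_j^2)=2\,\new(p_j)$ at some face, but in any case the coefficient of $\x^{2v}$ in $p_j^2$ is either zero or, when $2v$ happens to be the (necessarily unique, by the same uniqueness argument applied to $p_j$) maximizer, again a nonnegative square. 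Therefore the coefficient of $\x^{2v}$ in $f=\sum_j p_j^2$ is a sum of nonnegative reals at least one of which is strictly positive, so it is nonzero and $2v\in\new(f)$.

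I expect no real difficulty beyond bookkeeping: the proof is essentially the observation that the ``outer'' coefficients of a sum of squares are themselves sums of squares of the outer coefficients of the summands, so cancellation at vertices is impossible. The hypothesis $\new(f)\subseteq 2P$ then forces $2v\in 2P$, i.e.\ $v\in P$, for every vertex $v$ of every $\new(p_i)$, and taking convex hulls gives $\new(p_i)\subseteq P$.
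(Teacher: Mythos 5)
There is a genuine gap in the no-cancellation step, which is the crux of the argument. You fix $i$, a vertex $v$ of $\new(p_i)$, and a functional $\ell$ that is strictly maximized at $v$ \emph{over $\new(p_i)$ only}. That correctly shows the coefficient of $\x^{2v}$ in $p_i^2$ is $(a_v^i)^2>0$. But for another index $j$, your claim that the coefficient of $\x^{2v}$ in $p_j^2$ is ``either zero or a nonnegative square'' is false: that coefficient is $\sum_{\beta+\gamma=2v}a^j_\beta a^j_\gamma$, and since $\ell$ was not chosen to separate anything inside $\new(p_j)$, the point $2v$ may lie in the interior of $2\new(p_j)$ and the sum can be negative. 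Concretely, take $p_1=\sqrt{2}\,x$ and $p_2=x^2-1$: here $v=1$ is the vertex of $\new(p_1)=\{1\}$, yet $p_1^2+p_2^2=x^4+1$, so the coefficient of $x^{2v}=x^2$ in $f$ is $0$. Thus your asserted statement ``the coefficient of $\x^{2v}$ in $f$ is nonzero for every vertex $v$ of every $\new(p_i)$'' is simply not true, and the first paragraph's conclusion $2v\in\new(f)$ cannot be derived this way.

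The repair is to choose the vertex more carefully: let $Q=\conv\bigl(\bigcup_{j}\new(p_j)\bigr)$ and argue at an \emph{extreme point $\alpha$ of $Q$}. If some $\new(p_i)\nsubseteq P$, then $Q\nsubseteq P$, so some extreme point $\alpha$ of $Q$ lies outside $P$; such an $\alpha$ is a vertex of some $\new(p_j)$, and now a functional separating $\alpha$ from all other extreme points of $Q$ works simultaneously for every summand, forcing the coefficient of $\x^{2\alpha}$ in $f$ to equal $\sum_j (a^j_\alpha)^2>0$, contradicting $2\alpha\notin 2P\supseteq\new(f)$. This is exactly the paper's argument; your proof is missing the passage to the joint convex hull $Q$, and without it the key cancellation claim fails. (In your counterexample, $1$ is a vertex of $\new(p_1)$ but not an extreme point of $Q=[0,2]$, which is why the coefficient can vanish there.)
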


\begin{proof}
Let $f = p_1^2 + \dots + p_r^2$ and assume for contradiction that $\new(f)\subset 2P$ but
$\new(p_1)\nsubseteq P$. We can further assume that $\new(p_1)$ has a vertex $\alpha$ that
is not in $P$ and is an extreme point of the convex hull $Q$ of $\bigcup_{i=1}^r \new(p_i)$. Then the coefficient of $x^{2 \alpha}$ in $f$ must be $0$ because $2\alpha$ does not lie in $2P\supset \new(f)$. Since $\alpha$ is an extreme point of $Q$, there is a linear functional $\ell$ such that $\ell(\alpha) = -1$ and $\ell(\beta)>-1$ for all extreme points $\beta$ of $Q$. Therefore, $\ell(2 \alpha) = -2$ and $\ell(\beta + \beta') > -2$ for any extreme points $\beta,\beta'$ of $Q$ such that $\beta\neq \alpha$ or $\beta'\neq \alpha$. Since every extreme point of $2Q$ is the sum of two extreme points of $Q$, this means that the coefficient of $x^{2\alpha}$ is the sum of squares $\sum_{i=1}^r (a_\alpha^i)^2$ of the coefficients $a_\alpha^i$ of $x^\alpha$ in $p_i$. Since $a_\alpha^1\neq 0$, this coefficient of $p_1^2 + \dots + p_r^2$ is non-zero, a contradiction.
\end{proof}

We fix an order of the monomials in $x_1,\dots,x_n$, whose exponent vectors are lattice points in $P$, and write $\m_P$ for the vector of these monomials in the fixed order. Let $N$ be the length of this vector, i.e.~the number of lattice points in $P$.
Write $\Sym_N$ for the vector space of real symmetric $N\times N$ matrices and $\Sym_N^+$ 
for the cone of positive semidefinite matrices in $\Sym_N$. 

\begin{Prop}[{see \cite[Theorem~2.4]{CLR}}]\label{Prop:GramMethod}
A polynomial $f\in\R[\x]_{2P}$ is a sum of squares if
and only if there is a matrix $A \in \Sym_N^+$ such that
\[
f = \m_P^t A \m_P.
\]
\end{Prop}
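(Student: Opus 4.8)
The plan is to prove both directions of the equivalence by relating a sum-of-squares decomposition of $f$ to a factorization of the matrix $A$ through the vector of monomials $\m_P$.

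First I would prove the easy direction: suppose $A \in \Sym_N^+$ satisfies $f = \m_P^t A \m_P$. Since $A$ is positive semidefinite, it admits a factorization $A = B^t B$ for some real $r \times N$ matrix $B$, where $r = \rank(A)$ (e.g.\ via a Cholesky-type or spectral decomposition). Writing $p = B\m_P = (p_1,\dots,p_r)^t$, each $p_i$ is a real polynomial (a linear combination of the monomials in $\m_P$), and $f = \m_P^t B^t B \m_P = p^t p = p_1^2 + \cdots + p_r^2$, so $f$ is a sum of squares. (One also checks $\new(p_i^2+\cdots+p_r^2) \subseteq 2P$ automatically, since each $p_i$ is supported on lattice points of $P$; this is consistent with Proposition~\ref{Prop:newton}.)

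For the converse, suppose $f = p_1^2 + \cdots + p_r^2$ with $f \in \R[\x]_{2P}$. By Proposition~\ref{Prop:newton}, $\new(p_i) \subseteq P$ for every $i$, so each $p_i$ is a real linear combination of the monomials appearing in $\m_P$; hence there is a real vector $b_i \in \R^N$ with $p_i = b_i^t \m_P$. Assembling the $b_i$ as the rows of an $r\times N$ matrix $B$ and setting $A = B^t B = \sum_i b_i b_i^t$, we get $A \in \Sym_N^+$ and $\m_P^t A \m_P = \sum_i (b_i^t \m_P)^2 = \sum_i p_i^2 = f$, as desired.

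The one genuine subtlety — the step I expect to need the most care — is the implicit claim that the map $A \mapsto \m_P^t A \m_P$ is well-defined as a statement about matrices representing $f$, i.e.\ that equating the polynomial $\m_P^t A \m_P$ with $f$ is a set of \emph{linear} conditions on $A$. This is where the choice of $P$ (rather than a larger polytope) matters: the entries of $\m_P \m_P^t$ are the monomials $x^{\alpha+\beta}$ for lattice points $\alpha,\beta \in P$, and these monomials need not be distinct, so the coefficient of a given monomial $x^\gamma$ in $\m_P^t A \m_P$ is the sum of $A_{\alpha\beta}$ over all pairs with $\alpha+\beta=\gamma$. The key point, used in the converse direction, is that every monomial occurring in any such decomposition has exponent in $2P$, and conversely one must note that if $f \in \R[\x]_{2P}$ then all the linear constraints are consistent. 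Both of these are handled by Proposition~\ref{Prop:newton} together with the definition of $\R[\x]_{2P}$, so no new ideas are required; it is purely a matter of bookkeeping the correspondence between the coefficients of $f$ and the entries of $A$ along ``anti-diagonals'' indexed by lattice points of $2P$.
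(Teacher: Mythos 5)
Your proof is correct and matches the paper's argument essentially verbatim: both directions rest on factoring a positive semidefinite $A$ as $B^tB$ and, conversely, on Proposition~\ref{Prop:newton} to write each $p_i$ as $b_i^t\m_P$ and set $A=\sum_i b_ib_i^t$. The extra remarks about linearity of $A\mapsto \m_P^tA\m_P$ are fine but not needed beyond what the paper already does.
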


\begin{proof}
If $f = p_1^2 + \dots + p_r^2$, then by Proposition~\ref{Prop:newton}, 
$\new(p_i)\subseteq P$ for each $i=1, \hdots, r$. Let $c_i$ be the vector of coefficients
of $p_i$ such that $p_i = c_i^t \m_P$. Let $C$ be the matrix whose
$j$-th row is the vector $c_j^t$. Then $A = C^t C = \sum_i c_ic_i^t$ is a positive
semidefinite Gram matrix of $f$. Conversely, let $A$ be a positive
semidefinite Gram matrix of $f$ of rank $r$. Then it follows from the
principal axes theorem that there exists a
factorization $A = C^t C$, where $C$ is an $r\times N$ matrix
. This shows that $f = p_1^2 + \dots +p_r^2$, where the coefficient vector of $p_j$ is the $j$-th row of $C$.
\end{proof}

Proposition~\ref{Prop:GramMethod} is the basis for the computation of
sum-of-squares decompositions using semidefinite
programming (see for example \cite[Ch.~10]{Marshall08}). 
The set of all matrices satisfying the conditions of
Proposition~\ref{Prop:GramMethod} make up a spectrahedron.

\begin{Def}\label{Def:GramSpec}
Fix a polytope $P\subset \R^n$ with vertices in $\Z_{\geq 0}^n$ and let $N$ be the number of lattice points in $P$. 
Let $f\in\R[\x]_{2P}$ be a polynomial with $\new(f)\subseteq 2P$.
\begin{enumerate}[(a)]
\item Every matrix $A\in \Sym_N$ such that $f = \m_P^t A \m_P$ holds is called a \emph{Gram matrix} of $f$. We write $\G(f)$ for the affine-linear space of all Gram matrices of $f$.
\item The \emph{Gram spectrahedron} $\GS(f)$ of $f$ is the intersection of the cone of positive semidefinite matrices with the affine-linear space of Gram matrices of $f$, i.e.
\[
\GS(f) \ \ = \ \ \G(f)\cap\Sym_N^+  \ \ = \ \  \{A \in \Sym_N^+\colon \m_P^t A \m_P = f \}.
\]
\item The \emph{length} of a sum-of-squares representation
  $f=p_1^2+\cdots+p_r^2$ is $r$, the number of summands. The
  \emph{length} (or \emph{sum-of-squares length}) of the polynomial $f$ is
  the shortest length of any sum-of-squares representation of $f$. If $f$
  is not a sum of squares, its length is defined to be infinity.
\end{enumerate}
\end{Def}

By Proposition~\ref{Prop:GramMethod}, the length of a polynomial $f$ is
equal to the minimum rank of any matrix in $\GS(f)$. Indeed, a
matrix $A\in\GS(f)$ of rank $r$
gives rise to a representation of $f$
of length $r$.
Conversely, the proof shows that a representation of $f$ as a sum of $r$ squares 
leads to an explicit Gram matrix of rank at most $r$.

The sum-of-squares representation corresponding to a positive semidefinite
Gram matrix of a polynomial is not unique but rather depends on the
choice of a decomposition. The following lemma is helpful in stating
this precisely.

\begin{Lem}\label{Lem:OrthogonalEquivalence}
  Let $N,r\ge 0$ and let $B,C\in\Mat_{r\times N}(\R)$. Then
  $B^tB=C^tC$ if and only if there exists an orthogonal $r\times
  r$-matrix $U$ such that $C=UB$. 
\end{Lem}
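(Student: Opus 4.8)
The plan is to prove both directions of the equivalence, with the ``if'' direction being immediate and the ``only if'' direction requiring a little care about kernels. For the easy direction, suppose $C = UB$ with $U$ orthogonal. Then $C^tC = B^tU^tUB = B^t B$ since $U^tU = I$. Nothing more is needed here.

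For the converse, assume $B^tB = C^tC$; call this common $N\times N$ matrix $A$. The first observation is that for any vector $v\in\R^N$, we have $\|Bv\|^2 = v^tB^tBv = v^tC^tCv = \|Cv\|^2$, so $B$ and $C$ have the same kernel and agree in norm on every vector. Let $V = \im(B^t) = (\ker B)^\perp \subseteq \R^N$, and let $W_B = \im(B) \subseteq \R^r$ and similarly $W_C = \im(C)$. Restricting $B$ to $V$ gives a linear isomorphism $B|_V\colon V \to W_B$, and likewise for $C$. Define $U_0 := (C|_V)\circ(B|_V)^{-1}\colon W_B \to W_C$. The key step is to check that $U_0$ is a well-defined linear isometry from $W_B$ onto $W_C$: given $w = Bv \in W_B$ (with $v$ chosen in $V$), we have $\|U_0 w\| = \|Cv\| = \|Bv\| = \|w\|$ by the norm identity above, and $U_0$ is bijective because both $B|_V$ and $C|_V$ are. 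Since $U_0$ is an isometry, $\dim W_B = \dim W_C$, so their orthogonal complements in $\R^r$ also have equal dimension; pick any orthogonal (i.e.\ norm-preserving) isomorphism $U_1\colon W_B^\perp \to W_C^\perp$. Then $U := U_0 \oplus U_1$ is an orthogonal $r\times r$ matrix. Finally, one verifies $UB = C$ column by column: for each standard basis vector $e_j\in\R^N$, write $e_j = v_j + k_j$ with $v_j\in V$ and $k_j\in\ker B = \ker C$; then $Be_j = Bv_j \in W_B$, so $UBe_j = U_0 Bv_j = Cv_j = Ce_j$. Hence $UB = C$.

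The main obstacle, such as it is, is purely bookkeeping: one must be careful that $U_0$ is defined on the image of $B$ rather than trying to define it on all of $\R^r$, and that the norm-preservation argument genuinely forces $\ker B = \ker C$ so that the decomposition $e_j = v_j + k_j$ transports correctly. I should also note the degenerate cases: if $r = 0$ both sides are empty and the statement is vacuous; if some matrices are zero the argument still goes through with $V = \{0\}$ or $W_B = \{0\}$. Everything else — linearity of $U_0$, the direct-sum decomposition $\R^r = W_B \oplus W_B^\perp$, orthogonality of the assembled $U$ — is routine linear algebra. A slicker alternative worth mentioning is to apply the singular value decomposition to $B$ and to $C$: both have the form $B = P\Sigma_B R^t$, $C = Q\Sigma_C S^t$ with $P,Q$ orthogonal $r\times r$, $R,S$ orthogonal $N\times N$, and the condition $B^tB = C^tC$ forces $\Sigma_B = \Sigma_C$ and lets one match up right singular vectors, producing $U = QP^t$ after the appropriate adjustment on the zero block; but the direct argument above is self-contained and avoids invoking SVD.
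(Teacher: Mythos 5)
Your proof is correct and follows essentially the same route as the paper's: both construct a linear isometry from the column span of $B$ onto that of $C$ sending $Bv\mapsto Cv$ (the paper phrases this via the equality of pairwise inner products of columns, you via the norm identity $\|Bv\|=\|Cv\|$, which are the same fact by polarization) and then extend it to an orthogonal matrix on all of $\R^r$. Your version merely spells out the well-definedness and extension steps that the paper leaves implicit.
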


\begin{proof}
  Suppose that $B^tB=C^tC$. Let $b_1,\dots,b_N$ and $c_1,\dots,c_N$ be
  the column vectors of $B$  and $C$ (resp.) and let
  $V_B \subset\R^r$ and $V_C\subset\R^r$ be the column spans. By
  hypothesis, the columns of $B$ and $C$ have the same pairwise inner
  products $\scp{b_i,b_j}=\scp{c_i,c_j}$, $i,j=1,\dots,N$. This
  implies that there exists a linear isometry $\phi\colon V_B\to V_C$
  with $\phi(b_i)=c_i$ for all $i$. Any extension of $\phi$ to $\R^r$
  yields the desired orthogonal matrix $U$. The converse is obvious.
\end{proof}

We say that two sum-of-squares representations of the same polynomial
are \emph{equivalent} if they give the same Gram
matrix. By the lemma above, two representations
$f=p_1^2+\cdots+p_r^2=q_1^2+\cdots+q_r^2$ of
the same length are equivalent if and only if there exists an
orthogonal $(r\times r)$-matrix $U$ such that
\[
(q_1,\dots,q_r)^t=U\cdot (p_1,\dots,p_r)^t.
\]
Two representations of different length may be equivalent if there are linear relations among the summands. 
This equivalence was introduced in \cite[Proposition~2.10]{CLR}.

\begin{Lem}\label{lem:generaltop}
Let $P\subset\R^n$ be a lattice polytope whose vertices lie in $\Z_{\geq 0}^n$.
\begin{enumerate}[(a)]
\item For every polynomial $f\in \R[\x]_{2P}$, the Gram spectrahedron $\GS(f)$ is compact.
\item (see \cite[Proposition~5.5]{CLR}) The Gram spectrahedron $\GS(f)$ contains a positive definite matrix if and only if the polynomial $f$ lies in the interior of the cone of sums of squares in $\R[\x]_{2P}$.
\end{enumerate}
\end{Lem}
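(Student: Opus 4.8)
The plan is to prove (a) and (b) in turn, using the linear-algebraic description of $\GS(f)$ from Definition~\ref{Def:GramSpec}.

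For part (a), I would first observe that $\GS(f)$ is closed, being the intersection of the closed set $\Sym_N^+$ with the affine-linear space $\G(f)$. Since $\GS(f)$ consists of positive semidefinite matrices, it suffices to bound their traces in order to conclude boundedness (for PSD matrices, a bound on the trace bounds all entries, hence the whole set). The key point is that the diagonal entries of any Gram matrix $A$ of $f$ are, up to sign and the linear constraint $f = \m_P^t A \m_P$, controlled by the coefficients of $f$: indeed the coefficient of the monomial $x^{2\alpha}$ in $f$ (for $\alpha$ a lattice point of $P$ with $2\alpha$ a vertex of $2P$, so that $2\alpha$ has a unique expression as a sum of two lattice points of $P$) equals the corresponding diagonal entry $A_{\alpha\alpha}$. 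More robustly, for an arbitrary monomial order one uses that each diagonal entry $A_{\alpha\alpha}$ appears in the equation reading off the coefficient of $x^{2\alpha}$ with coefficient $1$, together with off-diagonal entries $A_{\beta\gamma}$ with $\beta+\gamma = 2\alpha$; since $A$ is PSD, $|A_{\beta\gamma}| \le \sqrt{A_{\beta\beta}A_{\gamma\gamma}} \le \tfrac12(A_{\beta\beta}+A_{\gamma\gamma}) \le \tfrac12\trace(A)$, so summing the coefficient equations over all $\alpha$ bounds $\trace(A)$ in terms of $\|f\|$ and $N$. Hence $\GS(f)$ is closed and bounded, so compact. The main obstacle here is handling the off-diagonal contributions cleanly; the inequality $|A_{\beta\gamma}| \le \sqrt{A_{\beta\beta}A_{\gamma\gamma}}$ valid for PSD matrices is exactly what makes this go through.

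For part (b), suppose first that $\GS(f)$ contains a positive definite matrix $A_0$. Given any $g \in \R[\x]_{2P}$, the affine space $\G(g)$ is a translate of the linear space $\G(0) = \{A : \m_P^t A \m_P = 0\}$, so we may pick any $A_g \in \G(g)$; for small $\varepsilon > 0$ the matrix $A_0 + \varepsilon A_g$ is still positive definite (openness of $\Sym_N^{++}$) and lies in $\G(f + \varepsilon g)$, so $f + \varepsilon g \in \Sigma_{2P}$. Since $g$ was arbitrary, $f$ lies in the interior of $\Sigma_{2P}$. Conversely, suppose $f$ is an interior point of $\Sigma_{2P}$. Fix any $A_1 \in \GS(f)$ (nonempty since $f$ is a sum of squares) and let $h = \m_P^t I_N \m_P = \sum_{\alpha} x^{2\alpha}$, where $I_N$ is the identity. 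Since $f$ is interior, $f - \delta h \in \Sigma_{2P}$ for sufficiently small $\delta > 0$, so there is some $A_2 \in \Sym_N^+$ with $\m_P^t A_2 \m_P = f - \delta h$; then $A_2 + \delta I_N$ is positive definite and is a Gram matrix of $f$, i.e.\ lies in $\GS(f)$. This direction is essentially immediate once one uses $h$ as a "strictly positive" element whose unique natural Gram matrix is the identity. Throughout, the one subtlety to keep straight is that $\Sigma_{2P}$ is a full-dimensional cone in $\R[\x]_{2P}$ precisely because $h \in \R[\x]_{2P}$ is a sum of squares with a positive definite Gram matrix, which is what licenses talking about its interior and closes the loop with part (b).
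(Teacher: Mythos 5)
Your part (b) is correct. The ``only if'' direction (perturbing a positive definite $A_0\in\GS(f)$ by $\varepsilon A_g$ for $A_g\in\G(g)$) is in substance the same as the paper's, which invokes the general fact that a surjective linear map carries interiors of convex sets to interiors; your ``if'' direction --- subtracting $\delta h$ with $h=\m_P^t I_N \m_P$, taking any $A_2\in\GS(f-\delta h)$ and adding $\delta I_N$ back --- is a cleaner and more explicit route than the paper's weak-separation argument. (Both your proof and the paper's tacitly assume the Gram map $\Sym_N\to\R[\x]_{2P}$ is surjective, i.e.\ that $\G(g)\neq\emptyset$ for every $g\in\R[\x]_{2P}$; this is what makes ``interior of $\Sigma_{2P}$ in $\R[\x]_{2P}$'' the right notion and is where $2$-normality of $P$ lurks.)

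Part (a) has a genuine gap in the boundedness step. Reducing compactness to a trace bound is fine, and your observation that $A_{\alpha\alpha}$ equals the coefficient of $x^{2\alpha}$ in $f$ when $\alpha$ is a vertex of $P$ is correct. But for a non-vertex lattice point $\alpha$ the coefficient equation reads $c_{2\alpha}(f)=A_{\alpha\alpha}+2\sum_{\{\beta,\gamma\}}A_{\beta\gamma}$ with $\beta\neq\gamma$ and $\beta+\gamma=2\alpha$, and once you estimate $|A_{\beta\gamma}|\le\tfrac12(A_{\beta\beta}+A_{\gamma\gamma})\le\tfrac12\trace(A)$ and sum over all $\alpha$, you only obtain $\trace(A)\le \|f\|+K\,\trace(A)$ where $\|f\|$ is the sum of the $|c_{2\alpha}(f)|$ and $K\ge 1$ as soon as a single off-diagonal term occurs (already for $P=[0,2]$, where $c_2(f)=A_{11}+2A_{02}$). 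This is vacuous: the diagonal entries you are trying to bound reappear on the right-hand side with total weight at least one, so the inequality does not close. The paper avoids this circularity entirely: assuming $\GS(f)$ unbounded, it rescales matrices $A_n$ of norm $n$ to $\tfrac1n A_n\in\GS(\tfrac1n f)$ of norm $1$, extracts a convergent subsequence, and observes that the limit would be a norm-one positive semidefinite Gram matrix of the zero polynomial --- impossible, since $\m_P^tA\m_P=0$ with $A=C^tC$ forces each entry of $C\m_P$ to vanish identically, hence $A=0$. Equivalently, the recession cone $\G(0)\cap\Sym_N^+$ of the spectrahedron is $\{0\}$. If you insist on an explicit a priori trace bound, use point evaluations instead of coefficient equations: choose $u_1,\dots,u_M\in\R^n$ with $B=\sum_k\m_P(u_k)\m_P(u_k)^t$ positive definite; then $\lambda_{\min}(B)\,\trace(A)\le\trace(AB)=\sum_k f(u_k)$ for every $A\in\GS(f)$.
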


\begin{proof}
The Gram spectrahedron $\GS(f)$ is closed, as it is
the intersection of two closed sets. Suppose $\GS(f)$ was
unbounded. Then there exists a positive semidefinite Gram matrix $A_n$
of $f$ with norm $n$ (in the Frobenius norm), for all sufficiently
large $n\in \Z_{\geq 0}$. Thus the matrix $\frac1n A_n$ is a positive
semidefinite Gram matrix of $\frac1n f$ with norm $1$. By compactness
of the unit sphere in $\Sym_N$, the sequence $(A_n)_n$ has a
convergent subsequence. The limit $A$ of this subsequence is a positive
semidefinite matrix representing the zero polynomial. But the only
such matrix is the zero matrix,
contradicting the fact that $A$ has norm $1$.

Part (b) follows from the general fact that linear maps preserve
(relative) interiors of convex sets. For the sake of completeness, we
include a proof of this statement. Let $C\subset\R^m$ be a convex set
with non-empty interior and let $\phi\colon\R^m\to\R^n$ be a
surjective linear map. Since $\phi$ is open, we have
$\phi({\rm int}(C))\subset{\rm int}(\phi(C))$. Conversely, let
$x\in\R^n$ be a point with $\phi^{-1}(x)\cap{\rm
  int}(C)=\emptyset$.
By weak separation of convex sets, there exists a linear functional
$\lambda\colon\R^m\to\R$ with $\lambda|_{\phi^{-1}(x)}=0$ and
$\lambda|_{{\rm int}(C)}>0$. Since ${\rm int}(C)\neq\emptyset$, this
implies $\lambda|_C\ge 0$ and since $\lambda$ is constant on the
fibers of $\phi$, it induces a functional $\lambda'\colon\R^n\to\R$
via $\lambda'(\phi(y))=\lambda(y)$ for $y\in\R^m$. Thus
$\lambda'(x)=0$ and $\lambda'|_{\phi(C)}\ge 0$, so that $x$ is not an
interior point of $\phi(C)$.

Applying this general fact to the Gram map $\Sym_N\to\R[x]_{2P}$,
$A\mapsto m_P^t A m_P$, which maps $\Sym_N^+$ to the cone of sums of squares in $\R[x]_{2P}$,
we obtain (b).
\end{proof}

From a computational point of view, it is also interesting to consider rational sum-of-squares representations of rational polynomials.
\begin{Lem}[{see also \cite[Section 3.6]{BPTMR3075433}}]\label{Lem:RationalGramMatrix}
A polynomial $f \in \Q[\x]_{2P}$ with rational coefficients is a sum
of squares of polynomials with rational coefficients if and only if
the Gram spectrahedron $\GS(f)$ contains a matrix with rational entries.
\end{Lem}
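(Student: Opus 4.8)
The plan is to prove the two implications separately. The ``only if'' direction is an immediate bookkeeping consequence of results already established; the real content sits in the converse, which rests on the classical fact that a symmetric bilinear form diagonalizes over $\Q$.

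\emph{The ``only if'' direction.} Suppose $f = p_1^2 + \cdots + p_r^2$ with all $p_i \in \Q[\x]$. By Proposition~\ref{Prop:newton}, $\new(p_i) \subseteq P$ for each $i$, so we may write $p_i = c_i^t \m_P$ with $c_i \in \Q^N$. Exactly as in the proof of Proposition~\ref{Prop:GramMethod}, the matrix $A := \sum_{i=1}^r c_i c_i^t$ is then a positive semidefinite Gram matrix of $f$, and its entries are rational because the $c_i$ are. Hence $A$ is a rational point of $\GS(f)$.

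\emph{The ``if'' direction.} Let $A \in \GS(f)$ have rational entries. The key step is a rational congruence diagonalization: since every symmetric matrix over a field of characteristic $\neq 2$ is congruent to a diagonal matrix, there exist an invertible matrix $P$ with rational entries and a diagonal matrix $D = \operatorname{diag}(d_1, \ldots, d_N)$ with $d_i \in \Q$ such that $A = P^t D P$. Since $A$ is positive semidefinite, so is $D$, hence $d_i \geq 0$ for all $i$. Writing $q_i$ for the $i$-th entry of the vector $P\m_P$ --- a rational polynomial with $\new(q_i) \subseteq P$ --- we obtain
\[
f \ = \ \m_P^t A \m_P \ = \ (P\m_P)^t D (P\m_P) \ = \ \sum_{i=1}^N d_i\, q_i^2 .
\]
It then remains to observe that each nonnegative rational $d_i$ is a sum of squares of rationals: if $d_i = a/b$ with $a,b$ positive integers, then $d_i = \sum_{j=1}^{ab}(1/b)^2$ (or, more economically, write the positive integer $ab$ as a sum of at most four integer squares). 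Substituting these expressions rewrites $\sum_i d_i q_i^2$ as a sum of squares in $\Q[\x]$.

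\emph{Main obstacle.} The only point beyond routine bookkeeping is the rational diagonalization $A = P^t D P$: the naive completion-of-squares induction stalls precisely when the current leading diagonal entry vanishes. This is handled by the standard case split --- permute to a nonzero diagonal entry if one exists; otherwise pick an off-diagonal entry $A_{ij} \neq 0$ and replace $e_i$ by $e_i + e_j$, whose form value $2A_{ij}$ is nonzero --- all of which keeps $P$ over $\Q$. For positive semidefinite $A$ this is even more transparent, since a vanishing diagonal entry forces the whole corresponding row and column to vanish, and symmetric Gaussian elimination with pivoting stays rational. Everything else follows by tracking Newton polytopes through Proposition~\ref{Prop:newton}.
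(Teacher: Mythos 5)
Your proof is correct and follows essentially the same route as the paper's: diagonalize the rational Gram matrix by a rational congruence, observe the diagonal entries are nonnegative rationals, and write each as a sum of rational squares (the paper uses Lagrange's four-square theorem here, which you mention as the economical option). The only quibble is notational: you reuse the letter $P$ for the congruence matrix, which collides with the polytope $P$ fixed in the statement.
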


\begin{proof}
If $f = p_1^2 + \dots + p_r^2$, where the polynomials $p_i\in \Q[\x]_P$, 
then the corresponding positive semidefinite 
Gram matrix of $f$ has rational entries. Conversely, suppose $A$ is a
positive semidefinite Gram matrix of $f$. We will factor it more
carefully than before: First, diagonalize $A$ over $\Q$ as a
quadratic form, i.e.~find an invertible matrix $U$ such that $U^t A U
= D$. Then $D$ is a diagonal matrix with rational entries and with the
same signature as $A$. The entries of $D$ are nonnegative. By
the theorem of Lagrange, we can write every diagonal entry of $D$ as a
sum of four squares, so we can write $D = D_1^2 + D_2^2 + D_3^2 +
D_4^2$, where the matrices $D_1,D_2,D_3,D_4$ are diagonal with
rational entries. In this way, we obtain a rational sum-of-squares
representation of $f$ of length $4\cdot{\rm rank}(A)$, namely
\[
f = \sum_{j=1}^4 (U^{-1}\m_P)^tD_j^t D_j (U^{-1} \m_p) = \m_P^t \left(\sum_{j=1}^4 (D_j U^{-1})^tD_j U^{-1}\right) \m_P.\qedhere
\]
\end{proof}

\begin{Rem}
  We may wish to consider sum-of-squares representations of polynomials over other fields, 
  like number fields. Lemma~\ref{Lem:OrthogonalEquivalence} holds over any ordered
  field, but Lemma~\ref{Lem:RationalGramMatrix} does not. This is
  because a field may admit more than one ordering, whereas the
  ordering of $\Q$ and $\R$ is unique. For example, the polynomial
  $x^2+\sqrt{2}$ is not a sum of squares in $\Q(\sqrt{2})[x]$, even
  though it possesses a positive semidefinite Gram matrix with entries
  in $\Q(\sqrt{2})\subset\R$, because $\sqrt{2}$ is not a sum of squares in the
  field $\Q(\sqrt{2})$.  The correct generalization of
  Lemma~\ref{Lem:RationalGramMatrix} to any ordered
  field therefore must use a stronger notion of positivity for
  symmetric matrices (namely, positivity under any ordering of the
  field), see \cite[Chapter VIII]{LaMR2104929} or \cite[Chapter
  3]{PrDeMR1829790}.
\end{Rem}

We may ask whether the condition in Lemma \ref{Lem:RationalGramMatrix}
holds for any polynomial $f$ with rational
coefficients. This holds if $f \in\Q[\x]_{2P}$ lies in
the interior of $\Sigma_{2P}$, since the Gram spectrahedron of $f$ will be full dimensional in 
$\Sym_N$ by Lemma~\ref{lem:generaltop}(b) and $\Sym_N(\Q)$ is dense in $\Sym_N(\R)$. 
On the boundary of the cone of sums of squares, this is no longer the case.  There are
in fact rational polynomials that are sums of squares over $\R$ but
not over $\Q$. The existence of such polynomials was shown by
Scheiderer, along with explicit examples, answering a question of
Sturmfels that had been open for some time. 
As observed below, the example of Scheiderer is minimal, as 
any such an example must have degree at least $4$ in at least $3$ variables.

\begin{Rem}
For $P = \Delta_{n-1}$, any $f\in\Q[\x]_{2P}$ is a quadratic form, which has a uniquely determined Gram matrix 
with rational entries. So it is positive semidefinite if and only if $f$ is a sum of squares of linear forms with rational coefficients.

Furthermore, if $f\in\Q[x]$ is a nonnegative univariate polynomial, 
then $f$ is a sum of squares of rational
polynomials. If $f$ is positive, it has a rational Gram matrix
by the arguments above. If $f$ has a zero $z\in \R$, then the minimal
polynomial $p$ of $z$ over $\Q$ must divide $f$ to an even power
because $p\in\Q[x]$ is separable (in particular, $p'(z)\neq 0$). Since
$p^2(h_1^2 + \dots + h_r^2) = (ph_1)^2 + \dots + (ph_r)^2$, we can
reduce to the case that $f$ is strictly positive. 
A representation of $f$ as a sum of rational squares can be found algorithmically as explained in \cite[Kapitel 2]{Schweighofer}.
\end{Rem}

\begin{Exm}{\cite[Example 2.8]{ScheidererRational}}.
The ternary quartic with rational coefficients 
\[
f = x^4 + xy^3 + y^4 - 3x^2yz - 4xy^2z + 2x^2z^2 + xz^3 + yz^3 + z^4
\]
is positive semidefinite. Its Gram spectrahedron is a line segment that is the convex hull of two representations of $f$ as a sum of two squares over the number field $\Q(\sqrt{-\beta})$ of degree $6$, where the minimal polynomial of $\beta$ over $\Q$ is $t^3-4t-1$. The representations as sums of two squares over $\Q(\sqrt{-\beta})$ are given via the identity
\[
4 f = \left( 2x^2 + \beta y^2 - yz + (2 + \frac1\beta) z^2\right)^2 + (\sqrt{-\beta})^2 \left(2xy - \frac1\beta y^2 + \frac2\beta xz + \beta yz - z^2 \right)^2
\]
by the two real embeddings of $\Q(\sqrt{-\beta})$. Scheiderer constructs this ternary quartic $f$ as a product of four linear forms over $\Q(\alpha)$, where $\alpha^4 - \alpha + 1=0$; namely
\[
f = (x + \alpha y + \alpha^2 z) (x + \ol{\alpha}y + \ol{\alpha}^2 z) (x + \zeta y + \zeta^2 z) (x + \ol{\zeta} y + \ol{\zeta}^2 z),
\]
where $\alpha, \ol{\alpha}, \zeta, \ol{\zeta}$ are the four roots of $t^4 - t + 1$, which come in complex conjugate pairs and the bar denotes complex conjugation. The minimal polynomial $t^3 - 4t -1$ is the resolvent cubic of the quartic minimal polynomial of $\alpha$. The factorization of $f$ over $\C$ shows that $f$ has two real zeros $(\alpha \ol{\alpha}: -\alpha-\ol{\alpha}:1)$ and $(\zeta \ol{\zeta}: -\zeta - \ol{\zeta}:1)$, which are singular points of the curve $\V(f)\subset\P^2$, and four complex singularities.

More generally, Scheiderer shows that every ternary quartic
$f\in\Q[x,y,z]$ that is a sum of squares over $\R$ but not over $\Q$
factors as $f = \ell_1 \ell_2 \ell_3 \ell_4$ in $\C[x,y,z]$ and the absolute Galois group of $\Q$ acts as the full symmetric group or the alternating group on four letters on this set of four lines; see \cite[Theorem 4.1]{ScheidererRational}.
\end{Exm}
For an approach to this example using symbolic computations for
linear matrix inequalities, see \cite[Section
5.2]{HNS2015arXiv150803715H}. The construction using products of
linear forms can be generalized to higher degree and more variables,
see \cite[Theorem 2.6]{ScheidererRational}. But this will
always result in forms with zeros, leaving the following as an open problem.
\begin{Ques}[{\cite[5.1]{ScheidererRational}}]
Does there exist  $f\in\Q[\x]_{2P}$ that is a sum of squares in
$\R[\x]_{2P}$, strictly positive on $\R$, but not a sum of squares over $\Q[\x]_P$?
\end{Ques}

Such an example must lie in the interior of the cone of
nonnegative polynomials and on the boundary of the cone $\Sigma_{2P}$
of sums of squares by Lemma \ref{lem:generaltop}(b). There
are explicit constructions of such polynomials in the literature, e.g.~\cite{BlMR2904568} and \cite{Reznick}, 
but it is not clear how to keep track of rationality.

\section{Connections to Toric Geometry}\label{sec:toric}
The Gram matrix method for fixed Newton polytopes can be understood through
toric geometry, see \cite[Section 6]{BSVMR3486176}. 
An excellent reference for toric geometry is \cite{CLSMR2810322}.

We begin by listing the Newton polytopes for which every nonnegative polynomial is a sum of squares.
This uses the classification of all projective varieties $X\subset\P^{N-1}$ such
that every nonnegative quadratic form on $X$ is a sum of squares in
the homogeneous coordinate ring of $X$, carried out by
Blekherman-Smith-Velasco in \cite{BSVMR3486176}. We translate their
result into a statement purely about Newton polytopes (see also \cite[Theorem~6.3]{BSVMR3486176}), using toric geometry and the classification of varieties of
minimal degree by del Pezzo and Bertini (see \cite{EisenbudHarris} for a modern presentation).
\begin{Thm}[{\cite[Theorems~1.1 and 6.3]{BSVMR3486176}}]\label{thm:NewtonPolytopes}
Let $P\subset\R^n$ be a lattice polytope and suppose that $P\cap \Z^n$ generates $\Z^n$ as a group. Further suppose that every nonnegative polynomial $f\in \R[\x]_{2P}$ with Newton polytope $2P$ is a sum of squares. Then the lattice polytope $P$ is, up to translation and an automorphism of the lattice, contained in one of the following polytopes.
\begin{enumerate}
\item The $m$-dimensional standard simplex $\conv\{0,e_1,\dots,e_m\}\subset \R^m$, where \\
$e_1,\dots,e_m$ is the standard basis of $\R^m$.
\item The Cayley polytope of $m$ line segments $[0,d_i]$ ($d_i\in \Z_{\geq 0}$, $i =1,\dots,m$):
\[
\conv \left\{ ([0,d_1]\times e_1) \cup ([0,d_2]\times e_2) \cup \dots \cup ([0,d_m]\times e_m)\right\}\subset \R\times\R^{m}.
\]
\item The scaled $2$-simplex $\conv\{(0,0),(2,0),(0,2)\}\subset\R^2$.
\item The free sum $\conv\{(Q\times\{0\}) \cup (\{0\}\times\Delta_{n-1})\}\subset\R^m\times \R^n$, where $Q\subset\R^m$ is one of the preceding polytopes (1)-(3) and $\Delta_{n-1} = \conv\{e_1,\dots,e_n\}\subset\R^n$.
\end{enumerate}
\end{Thm}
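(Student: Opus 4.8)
The plan is to translate the statement about Newton polytopes into the language of projective toric varieties and invoke the classification of Blekherman--Smith--Velasco. First I would recall that a lattice polytope $P$ with $P\cap\Z^n$ generating $\Z^n$ gives rise to a projective toric variety $X_P\subset\P^{N-1}$, where $N=\#(P\cap\Z^n)$, via the monomial map sending a point of the dense torus to the vector $\m_P$ of monomials with exponents in $P$; the homogeneous coordinate ring of this embedding is the semigroup ring of the cone over $P$, and its degree-two part is spanned by monomials with exponents in $2P$. Concretely, $\R[\x]_{2P}$ is naturally identified with the space of quadratic forms on $\P^{N-1}$ restricted to $X_P$, a polynomial $f\in\R[\x]_{2P}$ being nonnegative on $\R^n$ corresponds (after clearing denominators on the torus) to nonnegativity of the corresponding quadratic form on the real points of $X_P$, and Proposition~\ref{Prop:GramMethod} says $f$ is a sum of squares precisely when that quadratic form is a sum of squares in the coordinate ring of $X_P$. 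Thus the hypothesis ``every nonnegative $f\in\R[\x]_{2P}$ with Newton polytope $2P$ is a sum of squares'' becomes, essentially, ``every nonnegative quadratic form on $X_P$ is a sum of squares in its coordinate ring,'' modulo the care needed around forms whose Newton polytope is a proper face of $2P$.

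Next I would apply \cite[Theorem~1.1]{BSVMR3486176}: a real irreducible nondegenerate projective variety $X$ (totally real, satisfying mild normality hypotheses that toric varieties from full lattice polytopes meet) has the property that every nonnegative quadratic form on $X$ is a sum of squares if and only if $X$ is a variety of minimal degree, i.e. $\deg X = \codim X + 1$. So the task reduces to identifying which lattice polytopes $P$ yield toric varieties of minimal degree. Here $\deg X_P = m!\,\mathrm{vol}(P)$ (normalized volume) and $\codim X_P + 1 = N - \dim X_P = \#(P\cap\Z^n) - \dim P$, so the condition is the purely combinatorial equation $m!\,\mathrm{vol}(P) = \#(P\cap\Z^n) - \dim P$. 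I would then invoke the del Pezzo--Bertini classification of varieties of minimal degree (quadric hypersurfaces, rational normal scrolls, the Veronese surface $v_2(\P^2)$, and cones over these), combined with the dictionary between smooth/normal toric varieties of minimal degree and their polytopes: the standard simplex gives $\P^m$ in its Veronese-type embedding (item (1)), Cayley polytopes of segments give rational normal scrolls (item (2)), the scaled $2$-simplex gives the Veronese surface (item (3)), and forming a cone over a projective variety corresponds on the polytope side to the free sum with a simplex (item (4)). One must also handle the quadric hypersurface case and check it is subsumed; smooth quadrics appear as scrolls or Veronese-type, and singular ones as cones, so they fit into the given list.

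The main obstacle, and the place I expect to spend the most effort, is the reduction in the first step: the correspondence between nonnegativity/sums of squares on $\R^n$ and on the real toric variety $X_P$ is not perfectly clean, because (i) a form $f\in\R[\x]_{2P}$ may have Newton polytope strictly smaller than $2P$ — the theorem explicitly restricts to $f$ with $\new(f)=2P$, so one needs to argue that this suffices to detect the property for the whole cone, or conversely that the BSV hypothesis only constrains $P$ through such $f$; and (ii) the real points of $X_P$ versus the Euclidean-space picture differ by the torus orbit structure and by issues of which real forms of the toric variety one obtains. The cleanest route is probably to cite \cite[Section~6]{BSVMR3486176} directly for the equivalence, since the authors carry out exactly this translation, and then focus the argument on the polytope-theoretic identification of minimal-degree toric varieties via the volume/lattice-point count and the del Pezzo--Bertini list; the "up to translation and lattice automorphism" clause is then just the statement that these operations are precisely the ones preserving the isomorphism class of $X_P$ together with its embedding. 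A secondary technical point is verifying that cones over minimal-degree toric varieties are again toric of the claimed polytope type (the free-sum construction) and that the lattice-generation hypothesis is preserved, which is routine but should be stated.
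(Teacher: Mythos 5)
Your proposal follows essentially the same route as the paper: pass to the projective toric variety $X_P$, identify $\R[\x]_{2P}$ with quadratic forms on $X_P$, invoke \cite[Theorem~1.1]{BSVMR3486176} to reduce to varieties of minimal degree, and then read off the polytopes from the del Pezzo--Bertini classification together with the fact that projective equivalence of toric varieties corresponds to translation plus a lattice automorphism. The technical caveats you flag (forms with smaller Newton polytope, the real-points dictionary, and the quadric hypersurface case) are handled in the paper exactly as you suggest, by citing \cite[Section~6]{BSVMR3486176} for the translation.
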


\begin{proof}
Suppose the lattice polytope $P\subset\R^n$ has dimension $n$, contains $N$ lattice points, and $P\cap \Z^n$ generates $\Z^n$. List the lattice points of $P$ as $m_1,\dots,m_N$. Then $P$ defines an $n$-dimensional projective toric variety $X_P\subset\P^{N-1}$, which is the Zariski closure of the image of the map
\begin{equation}\label{eq:toric}
(\C^\ast)^n \to \P^{N-1}, \ \ x\mapsto (x^{m_1}:x^{m_2}:\dots:x^{m_N}).
\end{equation}
A polynomial $f\in\R[\x]_{2P}$ is the restriction of a quadratic form
to $X_P$ and the affine space of Gram matrices of $f$ is the set of
all quadratic forms in $N$ variables whose restriction to $X_P$ give
$f$. By \cite[Theorem 1.1]{BSVMR3486176}, every nonnegative quadratic
form on $X_P$ is a sum of squares if and only if $X_P$ is a variety of
minimal degree. By the classification of varieties of minimal degree
\cite{EisenbudHarris}, the toric variety $X_P$ has to be projectively
equivalent to one of the toric varieties given by the polytopes listed
(1)-(4). Note that rational normal scrolls
correspond to the Cayley polytopes in case (2), the Veronese surface
in $\P^5$ corresponds to the scaled $2$-simplex in case (3), and a
cone over a smooth variety of minimal degree corresponds to the
polytope in case (4). Two toric varieties $X_P\subset\P^{N-1}$ and
$X_Q\subset \P^{N-1}$ given by spanning lattice polytopes
$P,Q\subset\R^n$ are projectively equivalent if and only if the
polytope $P$ is obtained from $Q$ via an affine-linear isomorphism of
$\Z^n$, that is a composition of a translation and a lattice
automorphism, which implies the claim. While this last fact is known to experts in toric geometry, we are unable to find a suitable reference, and a proof would take us too far outside the scope of this paper.
\end{proof}

\begin{Exm}
Consider the polytope $P = \conv\{(0,0),(1,0),(2,1)\}$, which is the image of $\Delta_2$ under the lattice isomorphism that maps $(1,0)$ to $(1,0)$ and $(0,1)$ to $(2,1)$.
Theorem~\ref{thm:NewtonPolytopes} implies that every nonnegative polynomial with support in $2P = \conv\{(0,0),(2,0),(4,2)\}$ is a sum of squares, i.e. polynomials of the form
\[
p = a + bx+cx^2+dx^2y+ex^3y+fx^4y^2.
\]
Indeed, assuming $f=1$, it can be written as $p = \left(x^2 y + \frac12 (d+e x)\right)^2 - \frac14 \disc_y(p)$ and $p$ is nonnegative if and only if $-\disc_y(p)$ is a sum of two squares.
\end{Exm}

We have discussed all toric cases of varieties of minimal degree. There is one more case in the classification, that of quadratic hypersurfaces, which is intimately related to the S-procedure in the optimization literature.
\begin{Rem}\label{rem:quadratichypersurface}
Let $X = \V(Q)\subset\P^n$ be a quadratic hypersurface defined by a quadratic form $Q\in\R[x_0,\dots,x_n]_2$. If a quadratic form $f$ is nonnegative at all points $x\in\R^{n+1}$ with $Q(x) = 0$, then $f$ is a sum of squares of linear forms modulo $Q$ because $X$ is a variety of minimal degree, see \cite[Theorem 1.1]{BSVMR3486176}. In other words, there is a $\lambda\in\R$ such that $f + \lambda Q$ is a positive semidefinite matrix. In analogy with the toric case, we call the set $\{\lambda\in\R\colon f + \lambda Q \text{ is positive semidefinite}\}$ the Gram spectrahedron of $f$ modulo $Q$. This Gram spectrahedron is a point if and only if $f$ and $Q$ have a common real zero in $\P^n$. If $f$ is strictly positive on $X(\R)$, then the Gram spectrahedron modulo $Q$ is a line segment. Its interior points are positive definite matrices and, for generic $f$, the two boundary points have rank $n$. So a generic quadratic form $f$, which is positive on $X(\R)$, has two representations as a sum of $n$ squares of linear forms modulo $Q$ and no shorter representations.

The hyperbolicity of the determinant of the symmetric matrix implies that, for generic positive $f$ modulo $Q$, all $n+1$ rank-$n$ matrices representing $f$ are real.
\end{Rem}

This is the only case in which generic Gram spectrahedra are line segments.
\begin{Prop}\label{prop:quadratichypersurf}
Let $X\subset \P^n$ be a variety defined by quadrics. Suppose that $q\in \R[X]_2$ is an interior point of the cone of sums of squares in $\R[X]_2$. 
The Gram spectrahedron $\GS(q)$ is a line segment if and only if $X$ is a quadratic hypersurface.
\end{Prop}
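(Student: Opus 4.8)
The plan is to prove both directions. For the "only if" direction, I would argue contrapositively: if $X \subset \P^n$ is defined by quadrics but is \emph{not} a quadratic hypersurface, then $\GS(q)$ is strictly bigger than a line segment, i.e.\ has dimension at least $2$. The key observation is that $\dim \GS(q) = \dim \G(q) = \binom{N+1}{2} - \dim I(X)_2$, where $N = n+1$ and $I(X)_2$ is the space of quadrics vanishing on $X$ (using that $q$ is an interior point of $\Sigma$, so by Lemma~\ref{lem:generaltop}(b) the Gram spectrahedron is full-dimensional in the affine space $\G(q)$). So $\GS(q)$ is a line segment (dimension exactly $1$) iff $\dim I(X)_2 = \binom{N+1}{2} - 1$, i.e.\ iff $X$ is cut out by a single quadric. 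Since $X$ is assumed to be defined by quadrics, this happens precisely when $X = \V(Q)$ is a quadratic hypersurface. One must be slightly careful: "defined by quadrics" should mean the homogeneous ideal (or at least its degree-$2$ part together with the saturation condition) is generated in degree $2$; and one needs that $X$ is nondegenerate so that there are no linear forms in the ideal inflating the situation — but this is the standing setup for coordinate rings $\R[X]$ in this section. I would also invoke Remark~\ref{rem:quadratichypersurface} for the concrete description in the hypersurface case.

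For the converse, once the dimension count is in place, it is immediate: if $X = \V(Q)$ is a quadratic hypersurface then $\dim I(X)_2 = 1$ (again using nondegeneracy, so $Q$ is the unique quadric up to scalar), hence $\G(q)$ is an affine line, hence $\GS(q)$, being compact (Lemma~\ref{lem:generaltop}(a)), nonempty, and contained in a line, is a closed bounded segment; and it is genuinely one-dimensional rather than a point because $q$ lies in the interior of $\Sigma_{\R[X]_2}$, so $\GS(q)$ contains a positive definite (interior) point by Lemma~\ref{lem:generaltop}(b), forcing $\dim \GS(q) = 1$.

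The main obstacle is pinning down the correct hypotheses on "$X$ is defined by quadrics" and "interior point" so that the clean dimension formula $\dim \G(q) = \binom{N+1}{2} - \dim I(X)_2$ actually holds and the edge cases are excluded. Concretely, I need: (i) the evaluation map $\Sym_N \to \R[X]_2$ sending a symmetric matrix to the corresponding quadratic form on $X$ is surjective with kernel exactly $I(X)_2$, which is just linear algebra; (ii) $q$ being an interior point guarantees $\GS(q)$ is an affine-space-full-dimensional spectrahedron so its dimension equals $\dim\ker = \binom{N+1}{2} - \dim I(X)_2 - (\text{number of independent linear conditions imposed by fixing } q)$; unwinding this, $\dim\GS(q) = \dim I(X)_2$ when $\R[X]_2$ has the expected dimension. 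So "line segment" $\iff \dim I(X)_2 = 1 \iff X$ is a quadric hypersurface (given that it is cut out by quadrics and nondegenerate). I would present this as a short paragraph of linear algebra followed by the two one-line implications, citing Lemma~\ref{lem:generaltop} and Remark~\ref{rem:quadratichypersurface}.
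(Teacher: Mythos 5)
Your argument is essentially the paper's: the interior-point hypothesis gives a positive definite Gram matrix $G$ (by Lemma~\ref{lem:generaltop}(b)), so $\GS(q)$ is full-dimensional in the affine space $\G(q) = G + I(X)_2$, and hence is a line segment exactly when $\dim I(X)_2 = 1$, i.e.\ when $X$ is a quadratic hypersurface. One correction is needed: the dimension count in your first paragraph is inverted. The space $\G(q)$ is a fiber of the surjective evaluation map $\Sym_N \to \R[X]_2$, hence a translate of its kernel $I(X)_2$, so $\dim \G(q) = \dim I(X)_2$; the quantity $\binom{N+1}{2} - \dim I(X)_2$ that you write there is $\dim \R[X]_2$, the dimension of the image, not of the fiber. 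Accordingly, the correct criterion is $\dim I(X)_2 = 1$ (a single quadric up to scalar), not $\dim I(X)_2 = \binom{N+1}{2} - 1$. You do state the correct version in your closing paragraph, so this is a self-correcting slip rather than a missing idea, but as written the first paragraph contradicts the last and would have to be repaired before the proof is coherent.
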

\begin{proof} The spectrahedron $\G^+(q)$ is the intersection of the affine space $G + I_2$ with the cone $\Sym_{n+1}^+$, 
where $G$ is some Gram matrix of $q$ and $I_2$ is the linear space of symmetric matrices representing quadratic forms that vanish identically on $X$. 
By arguments similar to the proof of Lemma~\ref{lem:generaltop}, $q$ has a positive definite Gram matrix, which we can take to be $G$. 
Then the dimension of $\G^+(q)$ equals the dimension of $I_2$, which is one if and only if $X$ is a quadratic hypersurface. 
\end{proof}

We will now also give an interpretation of Gram spectrahedra in a toric setup. 
\begin{Rem}\label{rem:toric}
Let $P\subset\R^n$ be a lattice polytope whose vertices have nonnegative coordinates. We write $X_P$ for the Zariski closure of the image of the map
\[
m_P\colon (\C^\ast)^n \to \P^{N-1},  \ \ x\mapsto (x^{m_1}:x^{m_2}:\dots:x^{m_N}),
\]
where $N$ is the number of lattice points in $P$ and
$m_1,m_2,\dots,m_N$ are those lattice points in a fixed order. Note
that this does not quite agree with the usual notation in toric geometry (for
example in \cite{CLSMR2810322}) since we do not assume the polytope $P$
to be normal --- the map $m_P$ might not be an embedding of the torus into
$X_P$. If $P$ is normal, the projective variety $X_P$
defined above is isomorphic to the toric variety associated to
$P$. For our purposes, it is enough to assume that all lattice points
in $2P$ can be written as a sum of two lattice points in $P$. This property is called \emph{$2$-normal}. With
this assumption, we can interpret the vector spaces $\R[\x]_P$ and
$\R[\x]_{2P}$ in terms of the homogeneous coordinate ring of $X_P$.
Similarly to the Veronese embedding, polynomials $p\in\R[\x]_P$, whose
Newton polytopes are contained in $P$, are in 1-1 correspondence with
linear forms on $X_P$. Polynomials $f\in\R[\x]_{2P}$ can be represented by quadratic forms
restricted to $X_P$. In this case, a representing quadratic form is
not uniquely determined.  We will interpret the Gram spectrahedron of
a polynomial $f\in\R[\x]_{2P}$ in this setup.

The convex cone of symmetric matrices $A\in\Sym_N$ that represent a positive multiple of $f$, i.e.~$m_P^t A m_P = \lambda f$ for some $\lambda > 0$ (and the $0$ matrix), is the cone over the Gram spectrahedron $\GS(f)$ of $f$, which we denote by $\wh{\GS}(f)$. We can identify a matrix in $\wh{\GS}(f)$ with a nonnegative quadratic form on $\P^{N-1}$ that restricts to $f$ on $X_P$ (up to scaling), which means that it cuts out the divisor on $X_P$, which is uniquely determined by $f\in\R[\x]_{2P}$. The linear span $\wh{\G}(f)$ of the cone $\wh{\GS}(f)$ is the set of all quadratic forms that either cut out the divisor on $X_P$ defined by $f$ or vanish identically on $X_P$, i.e.~lie in the degree-$2$ part of the homogeneous ideal defining $X_P$. Note that we are homogenizing, that is, we are going from $\wh{\G}(f)$ to $\P(\wh{\G}(f))$. The hyperplane at infinity is the quadratic part of the vanishing ideal of $X_P$ and therefore independent of $f\in\R[\x]_{2P}$.

This point of view gives a natural interpretation of the boundary of the Gram spectrahedron.
If $f\in\R[\x]_{2P}$ is strictly positive, then the boundary of $\wh{\GS}(f)$ is the set of nonnegative extensions of $f$ to $\P^{N-1}$ that have a real zero in $\P^{N-1}$, which must necessarily be off the variety $X_P$.
\end{Rem}

The assumption that allows us to interpret polynomials in $\R[\x]_{2P}$ with quadratic forms on $X_P$ is satisfied for every lattice polytope $P$ such that every nonnegative polynomial in $\R[\x]_{2P}$ is a sum of squares. The main idea of the proof goes back to Motzkin's example of a nonnegative polynomial that is not a sum of squares and Reznick's generalizations in \cite{ReznickMR985241}.

\begin{Prop}[see also {\cite[Lemma~6.2]{BSVMR3486176}}]\label{Prop:normal}
Let $P\subset\R^n$ be a lattice polytope with vertices in $\Z_{\geq 0}^n$ and suppose that every nonnegative polynomial in $\R[\x]_{2P}$ is a sum of squares. Then every lattice point in $2P$ is a sum of two lattice points in $P$, i.e.~$P$ is $2$-normal.
\end{Prop}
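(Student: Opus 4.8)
The plan is to argue by contradiction, reproducing the mechanism behind Motzkin's polynomial and Reznick's agiforms. Suppose there is a lattice point $\gamma\in 2P\cap\Z^n$ that is \emph{not} a sum of two lattice points of $P$; I will construct a nonnegative polynomial $f\in\R[\x]_{2P}$ that is not a sum of squares, contradicting the hypothesis.

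The first step is purely combinatorial. Since $P$ is a lattice polytope, its vertices lie in $\Z^n$, so $P=\conv(P\cap\Z^n)$; consequently $2P$ is the convex hull of its \emph{even} lattice points $\{2\beta:\beta\in P\cap\Z^n\}$. Note that $\gamma$ itself cannot be even, since $\gamma=2\beta$ with $\beta\in P\cap\Z^n$ would give $\gamma=\beta+\beta$, against our assumption. Hence we may write $\gamma=\sum_j\lambda_j\delta_j$ as a convex combination of even lattice points $\delta_j=2\beta_j$ of $2P$, with positive rational weights $\lambda_j=a_j/N$ (such weights can be chosen because the $\delta_j$ and $\gamma$ are integral), where $\sum_j a_j=N$, and we set
\[
f \ := \ \sum_j \lambda_j\, x^{\delta_j} \ - \ x^{\gamma}.
\]

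The second step checks that $f\ge 0$ on $\R^n$ and lies in $\R[\x]_{2P}$. Each $x^{\delta_j}=(x^{\beta_j})^2\ge 0$, so the weighted AM--GM inequality gives $\sum_j\lambda_j x^{\delta_j}\ge\prod_j (x^{\delta_j})^{\lambda_j}$. The right-hand side is a nonnegative real whose $N$-th power equals $\prod_j (x^{\delta_j})^{a_j}=x^{\sum_j a_j\delta_j}=(x^{\gamma})^{N}$, so it equals $|x^{\gamma}|\ge x^{\gamma}$; therefore $f\ge 0$ everywhere. Moreover $\new(f)\subseteq 2P$ since all exponents $\delta_j$ and $\gamma$ lie in $2P$, and the coefficient of $x^{\gamma}$ in $f$ is $-1$ because $\gamma$, not being even, differs from every $\delta_j$.

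The final step derives the contradiction. By hypothesis $f=\sum_i p_i^2$ for some $p_i\in\R[\x]$. By Proposition~\ref{Prop:newton}, $\new(p_i)\subseteq P$, so every exponent occurring in $p_i^2$ has the form $\beta+\beta'$ with $\beta,\beta'\in P\cap\Z^n$; since $\gamma$ admits no such representation, the coefficient of $x^{\gamma}$ in $f$ must vanish, contradicting that it equals $-1$. The one delicate point is the nonnegativity argument: one must choose \emph{even} exponents $\delta_j$ so that the monomials $x^{\delta_j}$ are themselves squares and can absorb the sign of $x^{\gamma}$ via $x^{\gamma}\le|x^{\gamma}|$, and this is possible precisely because $\gamma$ lies in the convex hull of the even lattice points of $2P$ — the content of the first step.
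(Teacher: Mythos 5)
Your proposal is correct and follows essentially the same route as the paper: both construct the Reznick-type agiform $f=\sum_j\lambda_j x^{\delta_j}-x^{\gamma}$ from a convex combination of even lattice points (the paper uses vertices of $2P$, which are automatically even) and combine the weighted AM--GM inequality with Proposition~\ref{Prop:newton} to conclude that $f$ is nonnegative but not a sum of squares. Your version just makes explicit a few details the paper leaves implicit (rationality of the weights, that $\gamma$ is not even, and the evenness needed for the AM--GM step), all of which check out.
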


\begin{proof}
Assume for contradiction that there is a lattice point $\beta\in 2P$ that cannot be written as a sum of two lattice points in $P$. Choose affinely independent vertices $\alpha_0,\dots,\alpha_{n}$ of $2P$ such that $\beta = \lambda_0 \alpha_0 + \dots + \lambda_n \alpha_n$ is a convex combination of these vertices. By the weighted arithmetic-geometric mean inequality, the polynomial $f = \sum \lambda_i x^{\alpha_i} - x^\beta$ is nonnegative. It cannot be a sum of squares because the coefficient of $x^\beta$ will be $0$ in any sum of squares of polynomials whose  Newton polytope is contained in $P$. This proves the claim.
\end{proof}

\section{Ranks on Gram Spectrahedra}\label{sec:Ranks}

As explained in Section~\ref{sec:Gram}, the length of a polynomial is
the minimum number of squares needed to represent it. In the context
of ring theory and quadratic forms, the maximal length of any sum of
squares is often called the \textit{Pythagoras number} (see
for example \cite[\S 4]{Sc09}). In terms of Gram spectrahedra, the
length of a polynomial is the minimum rank of any matrix in its Gram
spectrahedron. One can use both general rank-constraints for
spectrahedra and more specific methods for sums of squares to study
the possible ranks of Gram matrices.

First, we will look at the ranks of extreme points of spectrahedra in general.
\begin{Prop}[{\cite[Chapter 3]{PatakirangeMR1778223}}]\label{Prop:Patakirange}
Let $L\subset \Sym_N$ be an affine-linear space of dimension $m$. The rank $r$ of an extreme point of $L\cap \Sym_N^+$ satisfies
\[
\binom{r+1}{2} + m \leq \binom{N+1}{2}.
\]
Furthermore, if the affine-linear space $L$ is generic, the rank $r$ also satisfies
\[
m \geq \binom{N-r+1}{2}.
\]
\end{Prop}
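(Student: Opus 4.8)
The plan is to treat the two inequalities separately, the first via the facial structure of the cone $\Sym_N^+$ at an extreme point and the second via a dimension count on the variety of symmetric matrices of bounded rank. For the first inequality, let $A$ be an extreme point of $S=L\cap\Sym_N^+$ of rank $r$, and set $W=\im(A)\subseteq\R^N$, so $\dim W=r$. I would consider the linear space
\[
  \mathcal F_A\;=\;\{\,X\in\Sym_N : \im(X)\subseteq W\,\},
\]
which is isomorphic to $\Sym_r$ and so has dimension $\binom{r+1}{2}$; moreover $\mathcal F_A\cap\Sym_N^+$ is the smallest face of $\Sym_N^+$ containing $A$, and $A$ lies in its relative interior because $A$ restricts to a positive definite form on $W$. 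Writing $L_0$ for the $m$-dimensional direction space of $L$, the key observation is that $L_0\cap\mathcal F_A=\{0\}$: if $0\ne Y\in L_0\cap\mathcal F_A$, then, $A$ being a relative-interior point of $\mathcal F_A\cap\Sym_N^+$ and $Y$ lying in its linear span, $A\pm\varepsilon Y\in\Sym_N^+$ for all small $\varepsilon>0$ while also $A\pm\varepsilon Y\in L$; so $A$ is the midpoint of two distinct points of $S$, contradicting extremality. Hence $m+\binom{r+1}{2}=\dim L_0+\dim\mathcal F_A\le\dim\Sym_N=\binom{N+1}{2}$.

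For the second inequality, let $\Sym_N^{\le r}$ be the real algebraic set of symmetric $N\times N$ matrices of rank at most $r$, cut out by the $(r+1)\times(r+1)$ minors; it is irreducible of codimension $\binom{N-r+1}{2}$ in $\Sym_N$ (the stratum of rank exactly $r$ fibers over $\gr(r,N)$ by $X\mapsto\im(X)$ with fiber the nondegenerate forms in $\Sym_r$, so has dimension $r(N-r)+\binom{r+1}{2}=\binom{N+1}{2}-\binom{N-r+1}{2}$). The heart of the matter is to show that a generic affine subspace $L$ of dimension $m<\binom{N-r+1}{2}$ is disjoint from $\Sym_N^{\le r}$. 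For this I would form the incidence variety
\[
  Z=\{\,(X,M) : X\in\Sym_N^{\le r},\ X\in M\,\}\ \subseteq\ \Sym_N^{\le r}\times\mathcal G_m,
\]
where $\mathcal G_m$ is the Grassmannian of $m$-dimensional affine subspaces of $\Sym_N$, of dimension $(m+1)\bigl(\binom{N+1}{2}-m\bigr)$; the fiber of $Z$ over $X$ is the set of $m$-flats through $X$, of dimension $m\bigl(\binom{N+1}{2}-m\bigr)$, so $\dim Z=\bigl(\binom{N+1}{2}-\binom{N-r+1}{2}\bigr)+m\bigl(\binom{N+1}{2}-m\bigr)$, which is strictly smaller than $\dim\mathcal G_m$ precisely when $m<\binom{N-r+1}{2}$. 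In that range the image of $Z$ under the second projection --- the set of $m$-flats meeting $\Sym_N^{\le r}$ --- lies in a proper closed subvariety of $\mathcal G_m$, so a generic $L$ avoids $\Sym_N^{\le r}$, whence every point of $S$ has rank $>r$. Thus, for generic $L$, an extreme point of $S$ of rank $\rho$ gives a point of $L\cap\Sym_N^{\le\rho}$, forcing $m\ge\binom{N-\rho+1}{2}$.

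The first inequality is routine convex geometry; the real work is the genericity statement behind the second. The subtlety there is that $\Sym_N^{\le r}$ is singular along the smaller-rank loci and one is working over $\R$, so after the clean complex dimension count one must note that $\Sym_N^{\le r}(\R)$ and the real affine Grassmannian $\mathcal G_m$ are irreducible of the expected real dimension, so that ``contained in a proper subvariety'' really yields a Euclidean-dense open set of admissible $L$. I expect that bookkeeping to be the only delicate point; alternatively one can invoke the determinantal-variety analysis of \cite{PatakirangeMR1778223} directly.
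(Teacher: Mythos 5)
Your argument is correct and follows essentially the same route as the paper's: the first inequality via the face of $\Sym_N^+$ through the extreme point (a linear space of dimension $\binom{r+1}{2}$ meeting the direction space of $L$ trivially) plus the dimension formula, and the second via the codimension $\binom{N-r+1}{2}$ of the rank-$\le r$ determinantal variety. You merely supply more detail than the paper does on the genericity step, by spelling out the incidence-variety dimension count.
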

These two inequalities define an interval of possible ranks for the
extreme points of a general spectrahedron, which is called the
\emph{Pataki interval}. The first inequality gives an upper bound on the rank $r$, whereas the second gives a lower bound.

\begin{proof}
We prove these two inequalities by a dimension count. Denote by $\V_r$ the variety of matrices of rank $\leq r$ in $\Sym_N$. 
Then $\V_r$ is ruled by linear spaces of dimension $\binom{r+1}{2}$, all of which are conjugate to the linear space of $r\times r$ symmetric matrices embedded into $\Sym_N$ as the upper left block, i.e. matrices of the form
\[
\begin{pmatrix}
A & 0 \\
0 & 0
\end{pmatrix}.
\]
The cone of positive semidefinite rank-$r$ matrices has non-empty interior in all of these linear spaces and a rank $r$ matrix is a relative interior point. If $L\cap \Sym_N^+$ has an extreme point $A$ of rank $r$, the affine-linear space $L$ must intersect the face $F_A$ of $\Sym_N^+$, in which $A$ is a relative interior point, in $A$ only. The span of $F_A$ is a linear space $U_A$ of 
dimension $\binom{r+1}{2}$ and the intersection $U_A\cap L$ is $0$-dimensional.  The first inequality then follows from the dimension formula in linear algebra.

For the second inequality, note that $\V_r$ has codimension $\binom{N-r+1}{2}$ in $\Sym_N$. A general linear space $L$ intersects $\V_r$ only if $\dim(L)$ is at least this codimension.
\end{proof}

Note that the \emph{smallest} rank of a positive semidefinite Gram matrix of a polynomial $f$ equals the smallest length of any sum of squares representation of $f$.
\begin{Prop}\label{Prop:smallestrank} \
\begin{enumerate}[(a)]
\item Let $r$ be the smallest rank of any matrix in the spectrahedron $L\cap \Sym_N^+$. Any matrix $A\in L\cap \Sym_N^+$ of rank $r$ is an extreme point of the spectrahedron.
\item For a general polynomial $f\in \R[\x]_{2P}$, the smallest rank $r$ of any positive semidefinite Gram matrix of $A$ satisfies the Pataki inequality
\[
m \geq \binom{N-r+1}{2},
\]
where $N$ is the number of lattice points in $P$ and $m$ is the dimension of the affine-linear space of Gram matrices of $f$.
\end{enumerate}
\end{Prop}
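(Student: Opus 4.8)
The plan is to treat the two parts separately: part~(a) is the standard ``minimum rank forces extremality'' argument, and part~(b) combines it with a dimension count that plays the role of the genericity hypothesis in Proposition~\ref{Prop:Patakirange} for the (non-generic, but generically translated) affine space $\G(f)$.

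For~(a), I would argue by contradiction. Suppose $A\in L\cap\Sym_N^+$ has rank $r$, the minimum rank on the spectrahedron, and $A=\tfrac12(B+C)$ with $B,C\in L\cap\Sym_N^+$, $B\neq C$. The first step is the usual kernel inclusion: for $v\in\ker A$ one has $0=v^tAv=\tfrac12(v^tBv+v^tCv)$ with both terms nonnegative, so $Bv=Cv=0$ because $B,C\succeq 0$; hence $\ker A\subseteq\ker B\cap\ker C$, and by minimality of $r$ these are equalities and $\rank B=\rank C=r$. Writing $D=B-A\neq 0$, which vanishes on $\ker A$, I would then look at the line $A+sD$ inside the affine space $L$: on $W=(\ker A)^\perp$ it reads $A|_W+sD|_W$ with $A|_W\succ 0$ and $D|_W\neq 0$, so $\{s: A+sD\succeq 0\}$ is a closed interval that is proper (otherwise dividing by $s$ and letting $s\to\pm\infty$ would force $D|_W\succeq 0$ and $D|_W\preceq 0$) and that contains $s=\pm1$. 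At a finite endpoint $s^*$ the matrix $A+s^*D$ still lies in $L\cap\Sym_N^+$, but $A|_W+s^*D|_W$ is singular, so $A+s^*D$ has rank $<r$ — contradicting minimality. Hence $A$ is extreme.

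For~(b), part~(a) says a minimum-rank positive semidefinite Gram matrix of $f$ is an extreme point of $\GS(f)=\G(f)\cap\Sym_N^+$, so morally the claim is the Pataki lower bound; but that bound is stated for generic affine spaces, whereas $\G(f)$ always has the \emph{fixed} linear part $I_2=\{A\in\Sym_N:\m_P^tA\m_P=0\}$ of dimension $m$, only its translate varying with $f$. I would instead prove the inequality directly by the dimension count underlying that bound. Fix $r$ and let $\V_r\subset\Sym_N$ be the locus of matrices of rank $\leq r$, of codimension $\binom{N-r+1}{2}$. An affine space $A_0+I_2$ meets $\V_r$ precisely when $A_0\in\V_r+I_2$, and
\[
\dim(\V_r+I_2)\ \leq\ \dim\V_r+\dim I_2\ =\ \tbinom{N+1}{2}-\tbinom{N-r+1}{2}+m,
\]
which is strictly less than $\binom{N+1}{2}=\dim\Sym_N$ exactly when $m<\binom{N-r+1}{2}$. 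Since the Gram map $\Sym_N\to\R[\x]_{2P}$ is surjective and $\V_r+I_2$ is invariant under translation by $I_2$, in that case a general $f$ has $\G(f)\cap\V_r=\emptyset$, i.e.\ no Gram matrix of rank $\leq r$. Intersecting these finitely many Zariski-open conditions (over the $r$ with $m<\binom{N-r+1}{2}$) and restricting to the full-dimensional cone $\Sigma_{2P}$ so that $\GS(f)\neq\emptyset$, I conclude that for general $f$ the minimum rank $r$ of a positive semidefinite Gram matrix satisfies $m\geq\binom{N-r+1}{2}$.

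The main obstacle is exactly this genericity gap in~(b): one cannot invoke Proposition~\ref{Prop:Patakirange} verbatim because $\G(f)$ is a translate of a single, highly non-generic linear space. What rescues the argument is the elementary fact that projecting $\V_r$ modulo $I_2$ does not increase dimension, so the ``bad'' set of translates has the expected deficient dimension whenever $m<\binom{N-r+1}{2}$, and a generic translate therefore still avoids $\V_r$. A secondary point to get right is the reading of ``general $f\in\R[\x]_{2P}$'': a general element there is not even nonnegative, so the statement should be understood inside $\Sigma_{2P}$, which is harmless since the conditions above are Zariski-open and $\Sigma_{2P}$ is full-dimensional.
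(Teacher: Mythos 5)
Your proof is correct and follows essentially the same route as the paper's: part (a) is the standard ``minimal rank implies extreme point'' argument via the face structure of $\Sym_N^+$, and part (b) is the same dimension count (the paper phrases it as the image of $\V_k$ under the Gram map having codimension at least one, which is exactly your statement that $\V_r+\G(0)$ is a proper subvariety). Your write-up merely supplies more detail, in particular the perturbation argument in (a), which the paper compresses into a one-line appeal to the rank behaviour of convex combinations of positive semidefinite matrices.
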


\begin{proof}
Part (a) follows from the fact that the rank of a convex combination $\lambda A + (1-\lambda)B$ of positive semidefinite matrices $A$ and $B$ is at least the minimum of the ranks of $A$ and $B$. Part (b) follows from a dimension count: Let $k$ be an integer that does not satisfy the Pataki inequality in the claim, then the codimension of the variety of symmetric matrices of rank at most $k$ is larger than $m$, the dimension of the kernel of the Gram map. So the image of this variety under the Gram map has codimension at least $1$. Therefore, a generic polynomial does not have a Gram matrix of rank at most $k$.
\end{proof}

\begin{Rem}
This Proposition \ref{Prop:smallestrank} shows that the rank of all extreme points of the Gram spectrahedron of a general polynomial lies in the Pataki interval for the ranks of extreme points of general spectrahedra.
\end{Rem}

Recall that the spanning polytopes $P$ for which every nonnegative polynomial in $\R[\x]_{2P}$ is a sum of squares are classified, see Theorem \ref{thm:NewtonPolytopes}. For varieties of minimal degree, Blekherman, Plaumann, Sinn, and Vinzant  \cite{BPSV2016arXiv160604387B} determined the lowest rank of a positive semidefinite Gram matrix of a general positive quadratic form. 
\begin{Thm}[{\cite[Theorem 1.1]{BPSV2016arXiv160604387B}}]\label{thm:BPSV}
Let $P\subset\R^n$ be an $n$-dimensional lattice polytope such that every nonnegative polynomial in $\R[\x]_{2P}$ is a sum of squares. The lowest rank of a positive semidefinite Gram matrix of a general nonnegative polynomial $f\in\R[\x]_{2P}$ is $n+1$, which is the smallest rank in the Pataki interval.
\end{Thm}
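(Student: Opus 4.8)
The plan is to establish the two bounds $r\ge n+1$ and $r\le n+1$ for the minimum rank $r$ of a positive semidefinite Gram matrix of a general nonnegative $f$ separately; the first is a short consequence of the Pataki bound once we know the dimension of the space of Gram matrices, while the second is the substantive part and genuinely uses that $X_P$ has minimal degree. I would begin with the toric dictionary of Remark~\ref{rem:toric}: since $P$ is $n$-dimensional and every nonnegative polynomial in $\R[\x]_{2P}$ is a sum of squares, $P$ is $2$-normal by Proposition~\ref{Prop:normal}, so $\R[\x]_P\cong\R[X_P]_1$ and $\R[\x]_{2P}\cong\R[X_P]_2$, and by \cite[Theorem~1.1]{BSVMR3486176} (as in the proof of Theorem~\ref{thm:NewtonPolytopes}) the toric variety $X_P\subset\P^{N-1}$ is a variety of minimal degree with $\dim X_P=n$, hence $\codim X_P=k:=N-1-n$ and $\deg X_P=k+1$. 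Under this dictionary the space $\G(f)$ of Gram matrices of $f$ is the coset $G+I_2$, where $I_2$ is the degree-$2$ part of the homogeneous ideal of $X_P$, so
\[
m:=\dim\G(f)=\dim I_2=\binom{N+1}{2}-h_{X_P}(2).
\]

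For the lower bound, I would use that a variety of minimal degree is arithmetically Cohen--Macaulay with $h$-vector $(1,k)$, so $h_{X_P}(d)=\binom{n+d}{n}+k\binom{n+d-1}{n}$ and in particular $h_{X_P}(2)=\binom{n+2}{2}+k(n+1)$; a direct computation then gives $m=\binom{k+1}{2}=\binom{N-n}{2}$. By Proposition~\ref{Prop:smallestrank}(b), the smallest rank $r$ of a positive semidefinite Gram matrix of a general $f$ satisfies $m\ge\binom{N-r+1}{2}$. Since $\binom{N-r+1}{2}$ strictly decreases as $r$ grows from $n$ to $n+1$ (we may assume $k\ge 1$, the case $k=0$ of a linear $X_P$ being trivial) and equals $\binom{N-n}{2}=m$ exactly at $r=n+1$, this forces $r\ge n+1$; one also checks $\binom{n+2}{2}+m\le\binom{N+1}{2}$, so $r=n+1$ satisfies the first Pataki inequality as well and is precisely the lower endpoint of the Pataki interval.

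The main obstacle is the upper bound: that a general nonnegative $f$ is a sum of at most $n+1$ squares, equivalently that $\GS(f)$ contains a matrix of rank $n+1$. This is special to varieties of minimal degree --- a generic spectrahedron of dimension $m=\binom{N-n}{2}=\codim\V_{n+1}$ typically contains no rank-$(n+1)$ matrix at all, and for $k\ge 2$ the first Pataki inequality only bounds ranks of extreme points by a number larger than $n+1$. As a first step I would show that the set of such $f$ has nonempty interior in $\R[\x]_{2P}$: the map $\Phi\colon(\R[\x]_P)^{n+1}\to\R[\x]_{2P}$, $(\ell_0,\dots,\ell_n)\mapsto\ell_0^2+\cdots+\ell_n^2$, has image equal to the set of sums of at most $n+1$ squares, and its differential at a general tuple is surjective, because its image is $\ell_0\R[X_P]_1+\cdots+\ell_n\R[X_P]_1$ and for general linear forms this equals $\R[X_P]_2$: the $n+1$ forms $\ell_0,\dots,\ell_n$ are a regular sequence on the arithmetically Cohen--Macaulay ring $\R[X_P]$, and the resulting Artinian quotient has $h$-vector $(1,k)$, hence vanishes in degree $2$. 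The delicate point --- where the full strength of \cite{BPSV2016arXiv160604387B}, building on \cite{BSVMR3486176}, is needed --- is to upgrade ``nonempty interior'' to ``a general nonnegative $f$'', i.e.\ to show that the nonnegative polynomials that are \emph{not} sums of at most $n+1$ squares lie in a set of smaller dimension. This amounts to controlling the real points of the generically finite scheme $\G(f)\cap\V_{n+1}$ as $f$ varies over the interior of the cone of sums of squares, and I expect it to require either a discriminant/hyperbolicity argument in the spirit of Remark~\ref{rem:quadratichypersurface} or an analysis running through the classification of varieties of minimal degree in Theorem~\ref{thm:NewtonPolytopes} --- rational normal scrolls, the Veronese surface in $\P^5$, cones over these, and free sums --- where explicit coordinates make the requisite transversality and reality statements verifiable. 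Combining the two bounds then yields that the minimum rank equals $n+1$ for a general nonnegative $f$.
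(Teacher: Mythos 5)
Your proposal follows essentially the same route as the paper: use Proposition~\ref{Prop:normal} and \cite[Theorem~1.1]{BSVMR3486176} to identify $X_P$ as an $n$-dimensional variety of minimal degree, invoke \cite[Theorem~1.1]{BPSV2016arXiv160604387B} for the substantive upper bound $r\le n+1$, and do a dimension count for the Pataki interval (which the paper simply cites as \cite[Lemma~1.2]{BPSV2016arXiv160604387B} but which you carry out correctly via the $h$-vector $(1,k)$, obtaining $m=\binom{N-n}{2}$). Your explicit acknowledgement that the upper bound rests on the cited reference matches the paper's treatment, so the argument is correct and not materially different.
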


\begin{proof}
Since every nonnegative polynomial in $\R[\x]_{2P}$ is a sum of
squares, every lattice point in $2P$ is a sum of two lattice points in
$P$ by Proposition~\ref{Prop:normal}. So we can interpret every
polynomial whose Newton polytope is contained in $2P$ as a quadratic form on $X_P$. Therefore, the projective variety $X_P$ is an $n$-dimensional variety of minimal degree by \cite[Theorem~1.1]{BSVMR3486176}. So \cite[Theorem 1.1]{BPSV2016arXiv160604387B} implies that every nonnegative quadratic form on $X_P$ is a sum of $n+1$ squares. The fact that this is the smallest rank in the Pataki interval for this setup follows from the dimension count in \cite[Lemma 1.2]{BPSV2016arXiv160604387B}, which also implies that $n+1$ is the smallest rank of a positive semidefinite Gram matrix of a general polynomial.
\end{proof}

In most other cases, the smallest rank of an extreme point of a
general Gram spectrahedron is not known. Recently, Scheiderer showed
that every ternary sextic that is a sum of squares is a sum of $4$
squares, and every quartic in four variables that is a sum of squares is a
sum of $5$ squares \cite[Theorems 4.1 and 4.2]{ScheidererLengths}.

In both of these cases, the smallest rank of a positive semidefinite
Gram matrix is the smallest rank in the Pataki intervals. We now prove
a more general theorem that implies both of Scheiderer's cases and extends to arithmetically Cohen-Macaulay and linearly normal varieties of almost minimal degree, i.e.~such that $\deg(X) = \codim(X)+2$. The key invariant is the quadratic deficiency. A variety $X\subset\P^n$ with quadratic deficiency $\epsilon(X) = 1$ is either a hypersurface of degree at least $3$ (which is not interesting in our context) or a linearly normal variety of almost minimal degree, see Zak \cite[Proposition 5.10]{ZakMR1678545}.
\begin{Thm}\label{Thm:ACMalmost}
  Let $X\subset\P^n$ be an irreducible non-degenerate real projective
  variety with Zariski-dense real points. If $X$ is arithmetically
  Cohen-Macaulay and the quadratic deficiency
  $\epsilon(X)$ equals $1$, then
  every sum of squares in $\R[X]_2$ is a sum
  of $\dim(X)+2$ squares. This is the smallest rank in the Pataki
  interval.
\end{Thm}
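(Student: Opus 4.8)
The assertion has two parts. First, that $\dim(X)+2$ is the smallest rank in the Pataki interval is a dimension count. Write $d=\dim X$, $c=\codim X$ and $m=\dim(I_X)_2$; note that $m$ is also the dimension of the affine space of Gram matrices of any fixed $q\in\R[X]_2$. Since the quadratic deficiency $\epsilon(X)$ equals $1$, there is one fewer quadric through $X$ than through a variety of minimal degree of codimension $c$, so $m=\binom{c+1}{2}-1$; equivalently, as $X$ is arithmetically Cohen--Macaulay and non-degenerate, the $h$-vector of $\R[X]$ is $(1,c,1)$. With $N=n+1=c+d+1$, the Pataki lower bound $m\ge\binom{N-r+1}{2}$ of Proposition~\ref{Prop:Patakirange} reads $\binom{c+1}{2}-1\ge\binom{c+d+2-r}{2}$; this fails for $r=d+1$, where the right side is $\binom{c+1}{2}$, and holds for $r=d+2$, where it is $\binom{c}{2}$ and $\binom{c+1}{2}-\binom{c}{2}=c\ge 1$, while the upper bound $\binom{r+1}{2}+m\le\binom{N+1}{2}$ is easily verified at $r=d+2$. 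Hence $d+2$ is the smallest rank in the Pataki interval, and it remains to exhibit, for every sum of squares in $\R[X]_2$, a representation of length at most $d+2$.

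For that, let $q\in\R[X]_2$ be a sum of squares, so $\GS(q)\neq\emptyset$, and pick $A\in\GS(q)$ of minimal rank $r$; by Proposition~\ref{Prop:smallestrank}(a) it is an extreme point, and I must show $r\le d+2$. Set $L=\im(A)$, a subspace of $\R^N$ of dimension $r$, and write $\Sym(L)\subseteq\Sym_N$ for the symmetric matrices supported on $L$. Extremality of $A$ is equivalent to $(I_X)_2\cap\Sym(L)=\{0\}$. A matrix supported on $L$ is the pullback, along the linear projection $\pi_L\colon\P^n\dashrightarrow\P(L)=\P^{r-1}$ with center $\P(\ker A)$, of a quadratic form on $\P(L)$; since $X$ is irreducible, such a form vanishes on $X$ exactly when its image vanishes on $Y:=\overline{\pi_L(X)}$. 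Therefore $(I_X)_2\cap\Sym(L)\cong(I_Y)_2$, so extremality says that the non-degenerate variety $Y\subseteq\P^{r-1}$, which satisfies $\dim Y\le\dim X=d$, lies on no quadric. If $r\ge d+3$ then $\codim Y\ge2$, and we are confronted with a non-degenerate variety of codimension at least $2$, dominated by the arithmetically Cohen--Macaulay almost-minimal-degree variety $X$, that lies on no quadric --- deriving a contradiction from this is the heart of the matter.

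Here one invokes the structure of arithmetically Cohen--Macaulay varieties of almost minimal degree: by the classification, such an $X$ is arithmetically Gorenstein (a del Pezzo variety, or a cone over one), with the symmetric $h$-vector $(1,c,1)$; see Zak~\cite{ZakMR1678545} and the references there. The plan is then to pass to a general Artinian reduction $\overline R=\R[X]/(\theta_1,\dots,\theta_{d+1})$, which has Hilbert function $(1,c,1)$, so that the image of $\R[X]_2$ in $\overline R_2$ is one-dimensional and the cone of sums of squares there degenerates to a ray; using Cohen--Macaulayness, a minimal-length certificate in $\overline R$ should lift to one on $X$ whose length exceeds the minimal-degree bound $\dim X+1$ of \cite{BPSV2016arXiv160604387B} (Theorem~\ref{thm:BPSV}) by at most one. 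Alternatively, one may attempt to run the free-resolution argument of \cite{BPSV2016arXiv160604387B} directly, exploiting that the minimal free resolution of $\R[X]$ is linear except for a single extra strand reflecting $\epsilon(X)=1$. I expect the main obstacle to be exactly this lifting: quantifying precisely how the sum-of-squares length of a quadratic form changes under Artinian reduction for arithmetically Gorenstein varieties, and showing the increase over the minimal-degree case is at most one square.
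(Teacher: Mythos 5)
Your verification that $\dim X+2$ is the smallest rank in the Pataki interval is correct: with $c=\codim X$ and $\epsilon(X)=1$ the affine space of Gram matrices has dimension $\binom{c+1}{2}-1$, which falls short of $\binom{c+1}{2}$ (the bound at $r=\dim X+1$) and meets $\binom{c}{2}$ (the bound at $r=\dim X+2$). But this is the routine half of the theorem. The main claim --- that every sum of squares in $\R[X]_2$ is a sum of $\dim X+2$ squares --- is not established, and you say so yourself. Two concrete problems. First, the condition you extract, that $\overline{\pi_L(X)}\subset\P(\im A)$ lies on no quadric, is a consequence of $A$ being an \emph{extreme point}, and extreme points of these Gram spectrahedra exist in ranks strictly larger than $\dim X+2$ (the Pataki interval has several entries above $\dim X+2$, and already for ternary quartics the generic boundary point of $\GS(f)$ is an extreme point of rank $5$ while the minimum is $3$). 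So no contradiction can follow from that condition alone; you would have to exploit \emph{minimality} of the rank, which is strictly stronger than extremality, and your setup never does. Second, the Artinian-reduction plan is only a hope: the image of $\R[X]_2$ in $\overline R_2$ is one-dimensional, so the reduction forgets the form $q$ and all real positivity data, and no mechanism is offered for lifting a short representation back to $\R[X]$. As written, the argument does not close.

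For comparison, the paper's proof runs along entirely different lines: it is a transversality-and-connectedness argument on the cone $\Sigma_X$ itself. Writing $m=\dim X$, one studies the maps $\phi_k\colon\R[X]_1^k\to\R[X]_2$, $(\ell_1,\dots,\ell_k)\mapsto\sum\ell_i^2$. The hypotheses (ACM and $\epsilon(X)=1$) enter by showing that at a tuple $(\ell_1,\dots,\ell_{m+1})$ with no common zero on $X$ the differential $\dd\phi_{m+1}$ has image a hyperplane in $\R[X]_2$; hence $\dd\phi_{m+2}$ is surjective at generic tuples. Removing from $\Sigma_X$ the discriminant $D$ and the hypersurface $\im(\phi_{m+1})$ leaves a dense set $S$ in which $\im(\phi_{m+2})\cap S$ is open and closed; since $D\cap{\rm int}(\Sigma_X)$ has codimension $2$ and a local perturbation at a smooth point of $\im(\phi_{m+1})$ produces sums of $m+2$ squares on both sides of that wall, $\im(\phi_{m+2})$ contains all of $S$, and closedness of $\phi_{m+2}$ gives $\im(\phi_{m+2})=\Sigma_X$. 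This surjectivity-of-the-differential and open--closed argument is the missing heart of the proof, and none of it appears in your proposal.
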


\begin{proof}
Let $m$ be the dimension of $X$ and consider first the map 
\[
\phi_{m+1}\colon \left\{
\begin{array}[]{l}
\R[X]_1^{m+1} \to \R[X]_2 \\
(\ell_1,\ell_2,\dots,\ell_{m+1}) \mapsto \sum_{i=1}^{m+1} \ell_i^2 .
\end{array}\right.
\]
The differential of this map at $(\ell_1,\dots,\ell_{m+1})$ takes $m+1$ linear forms $(h_1,\dots,h_{m+1})$ to $2 \sum_{i=1}^{m+1} h_i \ell_i$. If $\ell_1,\dots,\ell_{m+1}$ do not have a common zero on $X$, the same dimension count as in \cite[Lemma 1.2]{BPSV2016arXiv160604387B}, using the fact that $X$ is arithmetically Cohen-Macaulay, shows that the image of $\dd \phi_{m+1}$ at $(\ell_1,\dots,\ell_{m+1})$ is a hyperplane in $\R[X]_2$. In fact, \cite[Proposition 3.5]{BSVMR3486176} shows that the algebraic boundary of $\Sigma_X$ has two irreducible components, the discriminant $D$ and the Zariski closure of the image of $\phi_{m+1}$. We now show that the image of the map
\[
\phi_{m+2} \colon \left\{
\begin{array}[]{l}
\R[X]_1^{m+2} \to \R[X]_2 \\
(\ell_1,\ell_2,\dots,\ell_{m+2})\mapsto \sum_{i=1}^{m+2} \ell_i^2
\end{array}
\right.
\]
is the cone of sums of squares in $\R[X]_2$. First, note that the differential of $\phi_{m+2}$ is surjective at $(\ell_1,\ell_2,\dots,\ell_{m+2})$, whenever the linear functionals $\ell_1,\dots,\ell_{m+2}$ are linearly independent and do not have a common zero on $X$. Indeed, the space of all quadratic forms $h_1\ell_1 + \dots + h_{m+1} \ell_{m+1}$, where $h_1,\dots,h_{m+1}$ vary over all linear forms, is a hyperplane in $\R[X]_2$ by the dimension count above; so adding $\ell_{m+2}^2$ will give all of $\R[X]_2$.
Let $S = \Sigma_X\setminus (\im(\phi_{m+1}) \cup D)$ be a subset of
the cone of sums of squares, where we remove the sums of $m+1$ squares
and the discriminant. Then $S$ is dense in $\Sigma_X$ and the subset
$\im(\phi_{m+2})\cap S$ is open and closed in $S$. Therefore,
$\im(\phi_{m+2})\cap S$ is a union of connected components of $S$. In
fact, it is all of $S$: The set $\Sigma_X\setminus D$ is connected
because $D\cap{\rm int}(\Sigma_X)$ has codimension $2$. So two connected components of $S$ are separated by the hypersurface $Z = \im(\phi_{m+1})$. In this case, there is a regular point $s$ of this hypersurface in the interior of $\Sigma_X$. In a small neighborhood of $s$, the complement of $Z(\R)$ in $\R[X]_2$ has two connected components. Let $H$ be the tangent hyperplane to $Z$ at $s$. Since it meets the interior of $\Sigma_X$, there are squares $s_1$ and $s_2$ in both half-spaces defined by $H$. So both connected components of the complement of $Z(\R)$ contain a sum of $m+2$ squares, namely $s + \epsilon s_1$ and $s + \epsilon s_2$ for sufficiently small $\epsilon>0$. In conclusion, the set $S$ is contained in the image of $\phi_{m+2}$. By the density of $S$ in $\Sigma_X$ and the fact that the map $\phi_{m+2}$ is closed, we conclude $\Sigma_X = \im(\phi_{m+2})$, proving the claim.
\end{proof}

From this, we obtain the results of Scheiderer mentioned above.
\begin{Cor}[{\cite[Theorem 4.1 and 4.2]{ScheidererLengths}}]
\item
\begin{enumerate}
\item Every ternary sextic that is a sum of squares is a sum of $4$ squares.
\item Every quartic sum of squares in four variables is a sum of $5$ squares.
\end{enumerate}
\end{Cor}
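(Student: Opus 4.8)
The plan is to deduce both statements from Theorem~\ref{Thm:ACMalmost} applied to two Veronese varieties. For part~(1) I would take $X\subset\P^9$ to be the third Veronese embedding of $\P^2$, equivalently the variety $X_P$ attached to the polytope $P=3\Delta_2$ as in Section~\ref{sec:toric}. Since this embedding is given by the complete linear system $|\mathcal{O}_{\P^2}(3)|$, it is projectively normal, so $\R[X]_1$ is canonically the space of ternary cubics and $\R[X]_2$ the space of ternary sextics, with the squaring map $\R[X]_1\to\R[X]_2$ carrying a cubic $p$ to $p^2$. Hence a ternary sextic is a sum of squares in $\R[X]_2$ exactly when it is a sum of squares of cubics, and by Proposition~\ref{Prop:newton} every sum-of-squares ternary sextic is automatically of that form. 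For part~(2) I would symmetrically take $X\subset\P^9$ to be the second Veronese embedding of $\P^3$, that is $X_P$ for $P=2\Delta_3$; now $\R[X]_1$ is the space of quadratic forms in four variables and $\R[X]_2$ the space of quartic forms in four variables, and Proposition~\ref{Prop:newton} again identifies the sums of squares in $\R[X]_2$ with the quartic sums of squares of quadratics.

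Next I would check that each such $X$ satisfies the hypotheses of Theorem~\ref{Thm:ACMalmost}. Both are images of real projective spaces under embeddings defined by monomials, hence irreducible, non-degenerate (distinct monomials are linearly independent), and with real points dense in the Zariski topology; and Veronese embeddings of projective space are projectively normal, so arithmetically Cohen-Macaulay. It remains to verify that the quadratic deficiency equals $1$. For the third Veronese of $\P^2$ one computes $\deg(X)=3^2=9$ and $\codim(X)=9-2=7$, and for the second Veronese of $\P^3$ one computes $\deg(X)=2^3=8$ and $\codim(X)=9-3=6$; in both cases $\deg(X)=\codim(X)+2$, so $X$ is a linearly normal, arithmetically Cohen-Macaulay variety of almost minimal degree that is not a hypersurface, and therefore $\epsilon(X)=1$ by Zak \cite[Proposition 5.10]{ZakMR1678545}.

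Granting these checks, Theorem~\ref{Thm:ACMalmost} gives that every sum of squares in $\R[X]_2$ is a sum of $\dim(X)+2$ squares: for the third Veronese of $\P^2$ this is $2+2=4$, which with the dictionary of the first paragraph proves~(1), and for the second Veronese of $\P^3$ this is $3+2=5$, which proves~(2). The only genuinely non-routine point is the claim $\epsilon(X)=1$: this rests either on Zak's structure theory of varieties of almost minimal degree as quoted above, or on a direct count of the quadrics vanishing on these Veronese varieties (a classical computation). Everything else is the standard translation between forms on projective space and linear or quadratic forms on a Veronese variety, together with the classical fact that Veronese embeddings are arithmetically Cohen-Macaulay.
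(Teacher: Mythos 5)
Your proposal is correct and is essentially the paper's own argument: the paper likewise applies Theorem~\ref{Thm:ACMalmost} to the cubic Veronese $\nu_3(\P^2)$ and the quadratic Veronese $\nu_2(\P^3)$, citing that each is arithmetically Cohen--Macaulay with quadratic deficiency $1$. Your extra details (the degree/codimension counts $9=7+2$ and $8=6+2$, the appeal to Zak for $\epsilon(X)=1$, and the Newton-polytope dictionary) merely flesh out the checks the paper leaves implicit.
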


\begin{proof}
A ternary sextic corresponds to a quadratic form in $\R[\nu_3(\P^2)]$, the coordinate ring of the cubic Veronese embedding of $\P^2$, which is arithmetically Cohen-Macaulay. The quadratic deficiency of $\nu_3(\P^2)$ is $1$, so (1) follows from Theorem \ref{Thm:ACMalmost}. A quartic in $4$ variables corresponds to a quadratic form in $\R[\nu_2(\P^3)]$, the coordinate ring of the quadratic Veronese embedding of $\P^3$, which is again arithmetically Cohen-Macaulay with quadratic deficiency $1$.
\end{proof}

\begin{Rem}
  The two cases considered by Scheiderer in the above corollary
  correspond to toric varieties satisfying the conditions of
  Theorem~\ref{Thm:ACMalmost}. More generally, all such toric
  varieties have been classified. Namely, an embedded projective toric
  variety $X_P$ with normal lattice polytope $P$ is of almost minimal degree if and only if it is
  a Gorenstein del Pezzo variety (see Brodmann-Schenzel
  \cite[Theorem~6.2]{BS07}). In terms of the polytope, it means that the polytope $(n-1)P$ corresponding to the anti-canonical divisor up to translation is reflexive  (see \cite[Theorem 8.3.4]{CLSMR2810322}), i.e.~$P$ is Gorenstein of index $(n-1)$. These polytopes (without the normality assumption) have been
  classified by Batyrev and Juny in \cite{BJ10}. In the
  two-dimensional case, there are 16 reflexive polytopes, also
  discussed in the book of Cox, Little, and Schenck \cite[\S
  8.3]{CLSMR2810322}. Disregarding pyramids, which correspond to
  algebraic cones, there are only 37 Gorenstein polytopes of index $(n-1)$ in total, and their dimension is at most 5 by \cite{BJ10}. We have verified that all of these 37 lattice polytopes except for $P_1$ in the notation of \cite{BJ10} are normal, using the \textsc{Normaliz} software package \cite{Normaliz}.
\end{Rem}

Scheiderer also proves the following bound on the smallest rank of a positive semidefinite Gram matrix for ternary forms of degree at least $8$.
\begin{Thm}[{\cite[Theorem 3.6]{ScheidererLengths}}]\label{thm:scheiderer}
Every ternary form of degree $2d$ that is a sum of squares is a sum of $r$ squares, where $r$ is either $d+1$ or $d+2$.
\end{Thm}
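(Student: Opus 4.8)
The plan is to adopt the viewpoint of Section~\ref{sec:toric} and regard a ternary form $f$ of degree $2d$ as a quadratic form on the Veronese variety $X=\nu_d(\P^2)\subset\P^{N-1}$, $N=\binom{d+2}{2}$, so that $\R[X]_1$ is the space of ternary $d$-forms, $\R[X]_2$ the space of ternary $2d$-forms, $\Sigma_X\subset\R[X]_2$ the cone of sums of squares, and the length of $f$ is the smallest rank of a matrix in $\GS(f)$. The statement amounts to two inequalities on the Pythagoras number $p$ of $\R[X]_2$: the upper bound $p\le d+2$ (every sum of squares is a sum of $d+2$ squares) and the lower bound $p\ge d+1$ (some sum of squares is not a sum of $d$ squares). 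I would prove these separately.

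For the upper bound I would run the scheme of the proof of Theorem~\ref{Thm:ACMalmost}, with $X=\nu_d(\P^2)$ in place of a variety of almost minimal degree and $d+2$ in place of $\dim(X)+2$. Consider the squaring maps $\phi_r\colon\R[X]_1^r\to\R[X]_2$, $(\ell_1,\dots,\ell_r)\mapsto\sum_i\ell_i^2$. As in Lemma~\ref{lem:generaltop}(a) each $\phi_r$ is proper modulo the action of the orthogonal group $O(r)$, so $\im(\phi_r)$ is closed. The differential of $\phi_{d+2}$ at $(\ell_1,\dots,\ell_{d+2})$ has image $\sum_i\ell_i\cdot\R[X]_1$, the degree-$2d$ part of the ideal generated by $d+2$ forms of degree $d$ in three variables; since Fr\"oberg's conjecture is known in three variables, one checks that already four generic forms of degree $d$ generate everything in degree $2d$ once $d\ge 2$, so $\dd\phi_{d+2}$ is surjective at a generic tuple with no common zero and $\im(\phi_{d+2})$ contains a Euclidean-open subset of $\mathrm{int}(\Sigma_X)$. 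The remaining — and main — task is to propagate this across the algebraic boundary $\partial_a\Sigma_X$: one removes from $\Sigma_X$ the discriminant $D$ (which meets $\mathrm{int}(\Sigma_X)$ in codimension $2$, so $\Sigma_X\setminus D$ is connected) together with the locus where every $(d+2)$-term representation has non-surjective differential, argues that $\im(\phi_{d+2})$ is open and closed in the complement, and shows — as in Theorem~\ref{Thm:ACMalmost} — that $\phi_{d+2}$ reaches both sides of each remaining wall, so that its image meets every connected component. Closedness of $\im(\phi_{d+2})$ and density of the complement then force $\im(\phi_{d+2})=\Sigma_X$.

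For the lower bound I would show that a sufficiently general $f$ in $\mathrm{int}(\Sigma_X)$ is not a sum of $d$ squares. For $d\ge 3$ the variety $\nu_d(\P^2)$ is neither of minimal nor of almost minimal degree, so the parameter count underlying the Pataki bound — which would predict a complex Gram matrix of rank $d$ — does not produce a positive semidefinite one. Concretely, a positive semidefinite Gram matrix of rank at most $d$ gives $f=\sum_{i=1}^d\ell_i^2$ with the $\ell_i$ spanning a subspace $W\subseteq\R[X]_1$ of dimension at most $d$; running over all such $W$ and the (orthogonal-type) choices of the $\ell_i$ inside $\Sym^2 W$ bounds the dimension of the set of $f$ so obtained, and one checks this set is a proper semialgebraic subset of $\Sigma_X$. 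Hence $p\ge d+1$, and with the upper bound $p\in\{d+1,d+2\}$.

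The main obstacle is the propagation step in the upper bound. In the almost-minimal-degree setting of Theorem~\ref{Thm:ACMalmost} one has the clean description $\partial_a\Sigma_X=D\cup\overline{\im(\phi_{\dim(X)+1})}$ with a single hypersurface wall; for $\nu_d(\P^2)$ with $d\ge 4$ this fails, since $\overline{\im(\phi_{d+1})}$ is not a boundary component, and one must instead control the new interior walls separating sums of $\le d+2$ squares from the rest and show none of them traps $\im(\phi_{d+2})$. A naive version of the argument with $\phi_{d+1}$ in place of $\phi_{d+2}$ would prove too much, so the fact that $d+2$ (rather than $d+1$) squares always suffice is exactly what this analysis must extract; deciding whether $d+1$ already suffices is the borderline case left open.
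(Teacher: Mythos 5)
First, note that the paper does not prove this statement at all --- it quotes it as \cite[Theorem 3.6]{ScheidererLengths} --- so there is no internal proof to compare against; your argument has to stand on its own, and it does not. The more serious problem is the lower bound. You propose to show that a \emph{general} $f$ in the interior of $\Sigma_X$ is not a sum of $d$ squares by bounding the dimension of the set of sums of $d$ squares. That set is full-dimensional once $d\ge 4$: the parameter count gives $\dim\im(\phi_d)\le d\binom{d+2}{2}-\binom{d}{2}=\tfrac{d(d^2+2d+3)}{2}$, which already exceeds $\dim\R[x,y,z]_{2d}=(d+1)(2d+1)$ for $d\ge 4$, and indeed the Remark after Theorem~\ref{thm:scheiderer} in this paper records that for ternary octics ($d=4$) length $4=d$ is \emph{typical}. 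So for all $d\ge 4$ --- exactly the range where the theorem is not already covered by Hilbert's theorem or Theorem~\ref{Thm:ACMalmost} --- the forms of length $\ge d+1$ form a thin set, a genericity-plus-dimension argument cannot detect them, and the sentence ``one checks this set is a proper semialgebraic subset'' is precisely the step that fails. The known proofs of $p\ge d+1$ instead \emph{construct} special forms, e.g.\ forms whose real zero set $Z$ forces every square in every representation into the $(d+1)$-dimensional space of degree-$d$ forms vanishing on $Z$, with a unique (hence positive definite, rank $d+1$) Gram matrix on that space.

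The upper bound is also not established. The mechanism of Theorem~\ref{Thm:ACMalmost} depends on $\epsilon(X)=1$, which yields the clean boundary decomposition $\partial_a\Sigma_X=D\cup\overline{\im(\phi_{m+1})}$ with a single hypersurface wall, a regular point of which can be perturbed into both adjacent components by adding a small square. For $X=\nu_d(\P^2)$ with $d\ge 4$ none of this is available: $\im(\phi_{d+1})$ --- in fact already $\im(\phi_4)$ --- is full-dimensional, so generic surjectivity of $\dd\phi_{d+2}$ buys nothing beyond what $\phi_4$ already gives, and the ``walls'' bounding $\im(\phi_{d+2})$ inside ${\rm int}(\Sigma_X)$ are not identified, let alone crossed. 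You flag this yourself as the main obstacle, and you also observe that the identical template applied to $\phi_{d+1}$ (or $\phi_5$) would prove a false statement; but that means the argument as written contains no step in which the specific value $d+2$ does any work, so the essential content of the theorem is assumed rather than proved. As it stands, neither inequality is established.
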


\begin{Rem}
For $d\geq 4$, even $d+1$ is not the lower bound in the Pataki interval because $r=d$ satisfies the Pataki inequality. So the ranks of extreme points of a general Gram spectrahedron in the case of ternary forms of large degree satisfy the Pataki inequalities but the ranks do not achieve the lower bound of the interval.
\end{Rem}

In analogy to real Waring rank, we discuss typical (sum of squares) lengths.
\begin{Def}
A length $r$ is \emph{typical} in $\R[\x]_{2P}$ if the set of sums of squares of length $r$ has non-empty interior in $\R[\x]_{2P}$.
\end{Def}

\begin{Prop} Let $\Sigma(r)\subset\R[\x]_{2P}$ be the set of
  polynomials of length $\leq r$.
  \begin{compactenum}[(a)]\item The length is a lower semi-continuous
    function, i.e.~$\Sigma(r)$ is closed.
   \item If the interior of $\Sigma(r)$ is nonempty, then it is dense
     in $\Sigma(r)$.
  \end{compactenum}
\end{Prop}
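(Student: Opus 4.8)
The plan is to handle the two parts separately, both via the description of $\Sigma(r)$ through Gram matrices and sums of squares. For (a), the idea is that the Gram map $A\mapsto \m_P^tA\m_P$ is \emph{proper} in restriction to the positive semidefinite cone, so that $\Sigma(r)$ --- which by Propositions~\ref{Prop:newton} and~\ref{Prop:GramMethod} is the image under this map of the closed set $\{A\in\Sym_N^+:\rank A\le r\}$ --- is closed. Concretely, given $f_k\to f$ in $\R[\x]_{2P}$ with each $f_k\in\Sigma(r)$, choose positive semidefinite Gram matrices $A_k\in\GS(f_k)$ of rank at most $r$. The first step is to show $(A_k)$ is bounded: otherwise, passing to a subsequence, $A_k/\|A_k\|$ would converge to a nonzero positive semidefinite matrix representing the zero polynomial, which is impossible since the monomials in $\m_P$ are linearly independent; this is precisely the normalization argument of Lemma~\ref{lem:generaltop}(a). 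A convergent subsequence of $(A_k)$ then has a limit $A$ with $A\in\Sym_N^+$ and $\rank A\le r$ (both closed conditions) and $\m_P^tA\m_P=f$, so $f\in\Sigma(r)$.

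For (b), the plan is to realize $\Sigma(r)$ as the image of the polynomial map
\[
\phi_r\colon \R[\x]_P^{r}\to\R[\x]_{2P},\qquad (p_1,\dots,p_r)\mapsto p_1^2+\cdots+p_r^2,
\]
which is well defined and has image exactly $\Sigma(r)$ by Proposition~\ref{Prop:newton} (padding a shorter representation with zero summands). Assuming ${\rm int}(\Sigma(r))\neq\emptyset$, the set $\Sigma(r)$ has positive Lebesgue measure, so by Sard's theorem (the critical values of $\phi_r$ have measure zero) there is a point $w_0$ at which the differential $\dd\phi_r$ is surjective. The locus $W\subseteq\R[\x]_P^{r}$ where $\dd\phi_r$ is surjective is Zariski-open --- it is the non-vanishing locus of some maximal minor of the Jacobian --- and nonempty, hence Zariski-dense and so dense in the Euclidean topology. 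On $W$ the map $\phi_r$ is a submersion, hence open, so $\phi_r(W)$ is an open subset of $\R[\x]_{2P}$ contained in $\Sigma(r)$, i.e.\ $\phi_r(W)\subseteq{\rm int}(\Sigma(r))$. Finally, since $W$ is dense, $\overline{W}=\R[\x]_P^{r}$, and continuity of $\phi_r$ gives $\Sigma(r)=\phi_r(\overline{W})\subseteq\overline{\phi_r(W)}\subseteq\overline{{\rm int}(\Sigma(r))}$, which is the claim. (Alternatively one can avoid Sard: a full-dimensional semialgebraic set is Zariski-dense in its ambient space, so $\phi_r$ is dominant, and a dominant morphism from a smooth variety is submersive on a dense open set in characteristic zero.)

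The formal ingredients --- closedness of the rank and positive semidefiniteness conditions, and the fact that submersions are open maps --- are routine. The one real point in (a) is the uniform boundedness of the $A_k$, equivalently properness of the Gram map on $\Sym_N^+$; in (b) it is the existence of a single point where $\dd\phi_r$ is surjective, where Sard's theorem (or generic smoothness) does the work. Neither argument uses any special structure of $P$.
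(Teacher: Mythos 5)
Your proof is correct and in substance follows the paper's route; the only real divergence is in part (a). There the paper exploits homogeneity: since $\phi_r(\lambda p)=\lambda^2\phi_r(p)$ and $\phi_r(p)\neq 0$ for $p\neq 0$, the squaring map descends to a continuous map $\P(\R[\x]_P^r)\to\P(\R[\x]_{2P})$ between compact Hausdorff spaces, hence is closed, and closedness of $\Sigma(r)$ follows in one line. You instead work on the Gram-matrix side, writing $\Sigma(r)$ as the image of the rank-$\le r$ stratum of $\Sym_N^+$ and proving properness directly via the normalization argument of Lemma~\ref{lem:generaltop}(a). Both arguments hinge on the identical fact that the zero polynomial admits no nontrivial positive semidefinite representation; the paper's version is slicker, yours is more explicit and makes the sequential compactness visible, and both are complete. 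For part (b) the paper merely asserts that nonemptiness of the interior forces the differential of $\phi_r$ to be generically surjective and that $\Sigma(r)$ is therefore of full local dimension at every point; your Sard-plus-Zariski-openness argument, followed by $\phi_r(\overline{W})\subseteq\overline{\phi_r(W)}$, is a careful spelling-out of exactly that claim and, if anything, supplies the step from generic surjectivity to density of the interior more explicitly than the paper does.
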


\begin{proof}
  Let $\phi\colon\R[\x]_P^r\to\R[\x]_{2P}$ be the map
  $(p_1,\dots,p_r)\mapsto \sum_{i=1}^rp_i^2$. The map $\phi$ is
  homogeneous and $\phi(p_1,\hdots,p_r)\neq 0$ whenever
  $(p_1,\hdots,p_r)\neq 0$. So we can view $\phi$ as a continuous map from
  $\P(\R[\x]_P^r)$ to $\P(\R[\x]_{2P})$, taking the Euclidean
  topology on both projective spaces. As a continuous map between
  compact Hausdorff spaces, it is both proper and closed. In
  particular, the image of $\phi$ in $\R[\x]_{2P}$ is
  closed. This proves (a). For (b), it suffices to note that if ${\rm
    int}\bigl(\Sigma(r)\bigr)\neq\emptyset$, the
  differential of $\phi$ is generically surjective. Thus $\Sigma(r)$
  is locally of full dimension at every point.
\end{proof}

\begin{Cor}
All lengths between the minimal typical length and the maximal length are typical. Moreover, the maximal typical length is the maximal length.
\end{Cor}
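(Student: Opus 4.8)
The plan is to derive the statement from the preceding proposition, which records that every $\Sigma(r)$ is closed and that, whenever $\Sigma(r)$ has nonempty interior, this interior is dense in $\Sigma(r)$. Write $L_{\max}$ for the maximal length of a sum of squares in $\R[\x]_{2P}$. This number is finite and attained: by Proposition~\ref{Prop:GramMethod} every sum of squares in $\R[\x]_{2P}$ has a positive semidefinite Gram matrix, and the rank of such a matrix --- hence the length of the associated representation --- is at most the number $N$ of lattice points of $P$; so $\Sigma(N)=\Sigma_{2P}$ and every length is at most $N$. Note also that $\Sigma_{2P}$ has nonempty interior in $\R[\x]_{2P}$: the polynomial $\m_P^t I_N \m_P$ has the positive definite Gram matrix $I_N$, hence is an interior point of $\Sigma_{2P}$ by Lemma~\ref{lem:generaltop}(b). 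In particular there is a smallest integer $k$ with ${\rm int}(\Sigma(k))\neq\emptyset$.

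The engine of the argument, which I would establish first, is the observation: \emph{if ${\rm int}(\Sigma(r))\neq\emptyset$ and $r$ is not typical, then $\Sigma(r)=\Sigma(r-1)$.} The set of sums of squares of length exactly $r$ is $\Sigma(r)\setminus\Sigma(r-1)$, and since $\Sigma(r-1)$ is closed, a routine point-set check shows that its interior in $\R[\x]_{2P}$ equals ${\rm int}(\Sigma(r))\setminus\Sigma(r-1)$. Thus $r$ being non-typical says ${\rm int}(\Sigma(r))\subseteq\Sigma(r-1)$, and passing to closures gives $\Sigma(r)=\overline{{\rm int}(\Sigma(r))}\subseteq\overline{\Sigma(r-1)}=\Sigma(r-1)$, so $\Sigma(r)=\Sigma(r-1)$. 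This is the one step that uses the density part of the preceding proposition, and I regard it as the crux; everything else is bookkeeping. As a first application, the integer $k$ above is typical: otherwise $\Sigma(k)=\Sigma(k-1)$, so ${\rm int}(\Sigma(k-1))={\rm int}(\Sigma(k))\neq\emptyset$, contradicting the minimality of $k$. Hence the minimal typical length $L_{\min}$ exists, and $L_{\min}=k$, since any typical $r$ has ${\rm int}(\Sigma(r))\neq\emptyset$ and therefore $r\geq k$.

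Next I would show that one coincidence in the filtration propagates all the way up: if $\Sigma(r)=\Sigma(r-1)$, then $\Sigma(s)=\Sigma(r-1)$ for every $s\geq r-1$, and consequently $\Sigma_{2P}=\Sigma(r-1)$. Indeed, for $s\geq r$ and $f=p_1^2+\cdots+p_{s+1}^2$, the truncation $p_1^2+\cdots+p_s^2$ lies in $\Sigma(s)=\Sigma(r-1)$, hence is a sum of $r-1$ squares, and restoring $p_{s+1}^2$ writes $f$ as a sum of $r$ squares; induction on $s$ gives $\Sigma(s+1)=\Sigma(r)=\Sigma(r-1)$, and finally $\Sigma_{2P}=\bigcup_s\Sigma(s)=\Sigma(r-1)$.

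It remains to put the pieces together. Fix an integer $r$ with $L_{\min}\leq r\leq L_{\max}$. Since $\Sigma(L_{\min})\subseteq\Sigma(r)$ and $L_{\min}$ is typical, ${\rm int}(\Sigma(r))\neq\emptyset$. If $r$ were not typical, the observation above would force $\Sigma(r)=\Sigma(r-1)$, hence $\Sigma_{2P}=\Sigma(r-1)$ by the propagation step, so every sum of squares would have length at most $r-1<L_{\max}$, contradicting that $L_{\max}$ is attained. Therefore every such $r$ is typical, which is the first assertion. In particular $L_{\max}$ is typical, so the maximal typical length is at least $L_{\max}$; conversely, every typical length is the length of some sum of squares and hence at most $L_{\max}$. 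Thus the maximal typical length equals $L_{\max}$, the second assertion. The main obstacle is the observation isolated at the start of the second paragraph; once it is available, the corollary follows by the short formal argument above.
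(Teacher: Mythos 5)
Your proof is correct and follows essentially the same route as the paper's: the inductive shortening argument showing that $\Sigma(r)=\Sigma(r-1)$ propagates upward to give $\Sigma_{2P}=\Sigma(r-1)$, combined with the closedness and density statements of the preceding proposition to convert strict containments $\Sigma(r-1)\subsetneq\Sigma(r)$ into typicality. The only difference is that you spell out explicitly the topological step (non-typical plus nonempty interior forces $\Sigma(r)=\Sigma(r-1)$) that the paper compresses into ``the claim follows from the preceding proposition.''
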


This statement is also proved by Scheiderer \cite[Corollary~2.5]{ScheidererLengths}.

\begin{proof}
Let $r$ be a typical length and suppose that every sum of $r+1$ squares can be written as a sum of $r$ squares. We show that $r$ is the maximal typical length. Indeed, every sum of $r+k$ squares can be inductively shortened to a sum of $r$ squares. This argument shows that the set of polynomials of length $k$ is strictly contained in the set of polynomials of length $k+1$ for all $k$ that are smaller than the maximal typical length. Now the claim follows from the preceding proposition.
\end{proof}

The following statement follows directly from Proposition~\ref{Prop:smallestrank}.
\begin{Prop}
  Let $P\subset\R^n$ be an $n$-dimensional lattice
  polytope. The minimal typical length in $\R[\x]_{2P}$ satisfies the
  Pataki inequalities from Proposition~\ref{Prop:Patakirange}.
\end{Prop}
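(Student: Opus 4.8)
The statement asks us to show that the minimal typical length $r_0$ in $\R[\x]_{2P}$ satisfies both inequalities of Proposition~\ref{Prop:Patakirange}, and the plan is to extract each inequality from the corresponding part of Proposition~\ref{Prop:smallestrank}. First I would check that $r_0$ is well-defined: since $P$ is $n$-dimensional, $\Sigma_{2P}$ is full-dimensional in $\R[\x]_{2P}$, because $\m_P^t\m_P$ has the identity as a Gram matrix and hence lies in the interior of $\Sigma_{2P}$ by Lemma~\ref{lem:generaltop}(b); as $\Sigma_{2P}=\bigcup_{r\le N}\Sigma(r)$ is a finite union of closed sets, one of them has nonempty interior. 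By minimality of $r_0$, a general polynomial in the interior of $\Sigma(r_0)$ has length exactly $r_0$; in particular, some $f\in\R[\x]_{2P}$ has a positive semidefinite Gram matrix $A$ of rank $r_0$.

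For the upper bound $\binom{r_0+1}{2}+m\le\binom{N+1}{2}$, fix $f$ and $A$ as above. By Proposition~\ref{Prop:smallestrank}(a), $A$ is an extreme point of the Gram spectrahedron $\GS(f)=\G(f)\cap\Sym_N^+$, and $\G(f)$ is an affine-linear space of dimension $m$. The first inequality of Proposition~\ref{Prop:Patakirange}, applied with $L=\G(f)$, gives the bound. This half uses only the existence of one length-$r_0$ polynomial, and no genericity.

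For the lower bound $m\ge\binom{N-r_0+1}{2}$, I would argue by contradiction. If $m<\binom{N-r_0+1}{2}$, then the dimension count in the proof of Proposition~\ref{Prop:smallestrank}(b) shows that the set of polynomials in $\R[\x]_{2P}$ admitting \emph{any} Gram matrix of rank at most $r_0$ has empty interior. By Proposition~\ref{Prop:GramMethod}, $\Sigma(r_0)$ is contained in this set, so $\Sigma(r_0)$ has empty interior --- contradicting that $r_0$ is a typical length. Hence the second Pataki inequality holds.

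This is a short deduction and I do not expect a genuine obstacle. The one point that needs a little care is the translation between ``$\Sigma(r_0)$ has nonempty interior'' and statements about ranks of Gram matrices: the upper bound needs only a single polynomial of length $r_0$, whereas the lower bound relies on $\Sigma(r_0)$ not being contained in any lower-dimensional subset, which is exactly what the dimension count of Proposition~\ref{Prop:smallestrank}(b) forbids unless the Pataki inequality holds. A harmless bookkeeping subtlety --- that the Gram map need not be surjective when $P$ is not $2$-normal --- does not affect the argument, since $\Sigma(r_0)$ having nonempty interior forces surjectivity of the Gram map anyway.
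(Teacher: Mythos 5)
Your argument is correct and is essentially the deduction the paper has in mind: the paper offers no proof beyond the remark that the statement ``follows directly from Proposition~\ref{Prop:smallestrank}'', and your two halves --- extremality of a minimal-rank Gram matrix of a length-$r_0$ polynomial combined with the first Pataki inequality for the upper bound, and the dimension count from Proposition~\ref{Prop:smallestrank}(b) contradicting nonempty interior of $\Sigma(r_0)$ for the lower bound --- are exactly that deduction. The only blemish is your opening claim that $\Sigma_{2P}$ is always full-dimensional in $\R[\x]_{2P}$, which fails when $P$ is not $2$-normal (cf.\ Proposition~\ref{Prop:normal}); but, as you note at the end, in that case no length is typical and the proposition is vacuous, so the argument stands.
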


In general, it is difficult to determine the typical lengths for a
given polytope. Even the case of simplices, corresponding to all forms
of fixed degree, is open. Under the assumption that every nonnegative polynomial is a sum of squares, we can determine the minimal typical and maximal length, which happen to be equal.

\begin{Rem}
If the projective variety $X_P$ is of minimal degree, there is only one typical length $n+1$ by \cite[Theorem~1.1]{BPSV2016arXiv160604387B}. If $P$ is normal and the toric variety $X_P$ is of almost minimal degree (it is automatically arithmetically Cohen-Macaulay because it is toric \cite[Theorem~9.2.9]{CLSMR2810322}), there is also only one typical length, namely $n+2$ (Theorem~\ref{Thm:ACMalmost}) because $n+1$ is not typical, see \cite[Proposition~3.5]{BSVMR3486176}. In general, there can be several typical lengths. The first example is ternary forms of degree $8$. It can be verified by symbolic computation that length $4$ is typical. By Scheiderer's result \cite[Theorem~3.6]{ScheidererLengths} (see Theorem~\ref{thm:scheiderer}), we know that the maximal length in this case is either $5$ or $6$. In any case, length $5$ is also typical.
\end{Rem}

Coming back to the relation between sums of squares and general
convexity results, we have seen several ways in which the convex
geometry of Gram spectrahedra is similar to that of generic
spectrahedra of the same matrix size and dimension, for example, in
their Pataki intervals.  There are, however, some ways in which Gram
spectrahedra behave non-generically.  As discussed in the next
section, the algebraic degree of semidefinite programming over Gram
spectrahedra seems to be lower than the generic degree.  One reason
for this special behavior is that the affine Gram spaces $\G(f)$ for
$f\in \R[\x]_{2P}$ all share the same geometry at infinity.

\begin{Rem}
  Let $P\subset \R^n$ be a lattice polytope.  The affine-linear spaces
  $\G(f)$, where $f\in \R[\x]_{2P}$, are all translates of the same
  linear space $\G(0) = \{A\in \Sym_N : \m_P^tA\m_P = 0\}$. If the
  restriction of the determinant to $\G(0)$ is non-zero, then it is
  the leading form of the restriction $\det(\G(f))$, for any
  $f\in \R[\x]_{2P}$.  This implies that if $\det(\G(0))$ is non-zero
  and irreducible, then the determinant $\det(\G(f))$ is irreducible
  for every $f\in \R[\x]_{2P}$.  Based on computational evidence, this
  is often, but not always, the case.  For example, for binary forms
  corresponding to $P = d\Delta_1$, the restriction of the determinant
  to $\G(0)$ is identically zero for $d=1$, non-zero and reducible for
  $2\leq d \leq 4$, and non-zero and irreducible for $d\geq 5$.
\end{Rem}

\section{Examples: Binary forms and Ternary Quartics}\label{sec:Examples}
\subsection{Binary Forms}\label{subsec:Binary}
Binary forms are homogeneous polynomials in two variables. After dehomogenizing, these correspond to univariate polynomials. 
A nonnegative binary form can always be written as a sum of two squares. Following \cite[Example 2.13]{CLR}, 
we can compute the number of such representations. 

\begin{Prop}\label{prop:binaryforms}
Let $f\in \R[s,t]_{2d}$ be a positive binary form with distinct roots.
Over $\C$, the affine space $\G(f)$ contains $\frac{1}{2}\binom{2d}{d}$ matrices of rank two. The Gram spectrahedron $\GS(f)$ contains $2^{d-1}$ matrices of rank $2$. If $d$ is odd, then these are the only real rank two matrices. If $d$ is even, there are an additional $\frac{1}{2}\binom{d}{d/2}$ real indefinite matrices of rank $2$ in $\G(f)$.
\end{Prop}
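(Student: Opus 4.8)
The plan is to set up a dictionary between rank-$\le 2$ Gram matrices of $f$ and factorizations $f=gh$ into two binary forms of degree $d$. Take $P=d\Delta_1$, so $\m_P=(s^d,s^{d-1}t,\dots,t^d)$ and a Gram matrix is a symmetric $(d+1)\times(d+1)$-matrix $A$ with $\m_P^tA\m_P=f$. Over $\C$, every symmetric matrix $A$ of rank $\le 2$ can be written as $A=\tfrac12(uv^t+vu^t)$ with $u,v\in\C^{d+1}$ (put the congruence normal form $\mathrm{diag}(1,1,0,\dots,0)$ into this shape), and then $\m_P^tA\m_P=g\cdot h$, where $g$ and $h$ are the binary forms with coefficient vectors $u$ and $v$. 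Since $\deg g,\deg h\le d$ while $\deg(gh)=2d$, both have degree exactly $d$, and $A\in\G(f)$ of rank $\le 2$ corresponds to a factorization $f=gh$ with $g,h\in\C[s,t]_d$. Because $f$ has $2d$ distinct roots, $g$ and $h$ share no root, hence are not proportional, so $\rank A=2$ exactly; rank $1$ or $0$ would force $f$ to be a square or $0$, which is excluded. Conversely, a rank-$2$ symmetric $A$ determines the quadric $\{x: x^tAx=0\}$, which is the union of the hyperplanes $\{u^tx=0\}$ and $\{v^tx=0\}$; thus $A$ recovers the unordered pair $\{[u],[v]\}$, i.e. the unordered factorization, while $A=\tfrac12(uv^t+vu^t)$ is invariant under $(u,v)\mapsto(\lambda u,\lambda^{-1}v)$. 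So the rank-$2$ matrices in $\G(f)\otimes\C$ are in bijection with unordered factorizations $f=gh$ in degree $d$, which are exactly the $\tfrac12\binom{2d}{d}$ ways of splitting the $2d$ distinct roots of $f$ into two sets of size $d$.

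Next I would analyze reality and definiteness through this bijection. Conjugating $A$ replaces $\{g,h\}$ by $\{\bar g,\bar h\}$, so $A$ is real iff complex conjugation fixes the unordered pair $\{g,h\}$ (up to scaling), which can happen in two ways. (i) $\bar g\propto h$: then $f=gh$ is a positive constant multiple of $g\bar g=|g|^2$ (positive because $f$ is a positive form), so after rescaling $f=p^2+q^2$ with $p=\Re g$, $q=\Im g\in\R[s,t]_d$, and $A$ is the positive semidefinite Gram matrix $c_pc_p^t+c_qc_q^t$. Since $f$ has no real roots, its $2d$ roots form $d$ conjugate pairs, and a quick argument shows $g$ must contain exactly one root from each pair: this gives $2^d$ choices of $g$, and the swap $g\leftrightarrow\bar g$ identifies them in pairs, leaving $2^{d-1}$ positive semidefinite rank-$2$ matrices. (ii) $\bar g\propto g$ and $\bar h\propto h$: then $g$ and $h$ may be taken real, and being real degree-$d$ divisors of $f$, each is a product of a set of conjugate pairs of roots; this forces $d$ even and corresponds to splitting the $d$ conjugate pairs into two groups of $d/2$, i.e. $\tfrac12\binom{d}{d/2}$ unordered choices. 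For such $A$ one has $x^tAx=(u^tx)(v^tx)$ with $u,v$ real and non-proportional, so $A$ is indefinite of rank $2$. Cases (i) and (ii) are disjoint (in (ii), $\bar g\propto g\not\propto h$), and together they give all real rank-$2$ matrices; when $d$ is odd, case (ii) is empty, so the $2^{d-1}$ semidefinite matrices are the only real ones.

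The main obstacle is the bookkeeping of the two-to-one identifications: making the quadric-splitting argument cleanly recover $\{g,h\}$ up to scaling, verifying that the only coincidences among rank-$2$ matrices are the expected ones, and confirming the root-counting claims — in particular that in case (i) the factor $g$ meets each conjugate pair in exactly one root, and that in case (ii) a real divisor of a form with no real roots is a product of conjugate pairs (forcing $d$ even). Everything beyond this is a routine count of subsets of the root set. One can also cross-check the answer at $d=1,2,3$ against $\tfrac12\binom{2d}{d}=2^{d-1}+\tfrac12\binom{d}{d/2}$ (the last term present only for $d$ even) for consistency.
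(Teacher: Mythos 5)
Your argument is correct and is essentially the paper's proof: both identify the rank-two Gram matrices with factorizations $f=gh$ into degree-$d$ factors, hence with partitions of the $2d$ distinct roots into two blocks of size $d$, and then sort these by how complex conjugation acts on the blocks (swapping them gives the $2^{d-1}$ positive semidefinite matrices via $f=g\bar g=\Re(g)^2+\Im(g)^2$; fixing both gives the $\tfrac12\binom{d}{d/2}$ real indefinite ones, possible only for $d$ even), with your explicit normal form $A=\tfrac12(uv^t+vu^t)$ merely spelling out the linear algebra the paper handles via equivalence classes of sums of two squares over $\C$. One small caveat: your closing ``cross-check'' identity $\tfrac12\binom{2d}{d}=2^{d-1}+\tfrac12\binom{d}{d/2}$ fails already at $d=3$ (it reads $10$ versus $4$) because most complex rank-two Gram matrices are not real, so it should be read only as the inequality that the real count does not exceed the complex count.
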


\begin{proof}
Over $\C$, writing $f$ as a sum of two squares $f =p^2+q^2$ is the same as giving a factorization $f = (p+i q)(p-i q)$. 
Therefore, a sum-of-two-squares representation yields a partition of the roots of $f$ into two subsets of size $d$.
Moreover, two equivalent representations as a sum of two squares yield the same partition. 
Indeed, any representation of $f$ as a sum of two squares, which is equivalent to $f = p^2 + q^2 = (p+i q)(p-i q)$, has the form
\[
 f \ \ = \ \ (\cos(\theta) p + \sin(\theta) q)^2 + (\sin(\theta) p - \cos(\theta) q)^2 \ \ = \ \ \bigl(e^{i\theta}( p -iq)\bigl)\cdot \bigl(e^{-i\theta}( p +iq )\bigl).
\]
This equivalent representation gives the same partition of the roots of $f$ into the roots of $p+iq$ and $p-iq$. The number of such partitions is $\frac{1}{2}\binom{2d}{d}$.

For such a partition to be real, complex conjugation must either fix
both blocks of $d$ roots or swap them. If both blocks are fixed, $d$
must be even, and the number of such partitions is
$\frac{1}{2}\binom{d}{d/2}$. This gives
$f = g h = \frac{1}{2}((g+h)^2 - (g-h)^2)$, where both $g$ and $h$ are
positive. If the two blocks of the partition are swapped by conjugation,
then each conjugate pair has one element in each block. The number of
such partitions is $2^{d-1}$. These give an expression
$f = g \overline{g} = ({\rm Re}(g))^2+ ({\rm Im}(g))^2$ as a sum of
two squares of real polynomials.
\end{proof}

Ranks of the matrices in the convex hull of the $2^{d-1}$ matrices of
rank two are discussed at the end of Section~\ref{sec:Hermitian}.
Extreme points on the Gram spectrahedron of rank greater than $2$ are
not well understood.  For example, in high dimensions, no instance of one single
spectrahedron with extreme points of all ranks in the Pataki interval
is known.  The Gram spectrahedron of a binary form seems like a good
candidate to have this property.

\begin{Ques}
  Does there exist a binary form whose Gram spectrahedron has 
  extreme points of all ranks in the Pataki interval?
\end{Ques}

We will discuss the Gram spectrahedra of binary forms of low degree. Let $f\in\R[s,t]_{2d}$ be a binary form of degree $2d$.
If $2d=2$, the Gram spectrahedron is $0$-dimensional. If $f = a_2s^2 + a_1st + a_0t^2$ is nonnegative, then its Gram spectrahedron consists of the positive semidefinite matrix
$
{\small \begin{pmatrix}
a_2 \! & \! \frac{1}{2}a_1 \\
 \frac{1}{2}a_1 \! & \! a_0
\end{pmatrix}}$.
The determinant of this matrix is the discriminant of $f$. 
If $f$ has a zero, then $f = \ell^2$ for the linear form vanishing at the root of $f$ and its Gram matrix has rank $1$. If $f$ is strictly positive, then $f$ is the sum of two squares and its Gram matrix has rank $2$. This representation can be found by completing the square, and it is the unique way of writing $f$ as a sum of squares up to orthogonal changes of coordinates.

If $f$ is a positive binary form of degree $2d = 4$, then $\GS(f)$ is a line segment, which is the convex hull of the two decompositions of $f$ as the sum of two squares, say $f = p_1^2 + q_1^2$ and $f = p_2^2 + q_2^2$, cf.~Proposition~\ref{prop:quadratichypersurf}. The representation 
\[
f  = \frac12 (p_1^2 + q_1^2) + \frac12 (p_2^2 + q_2^2)
\]
is equivalents to  a sum of three squares, because the rank of a Gram matrix of $f$ in the relative interior of the Gram spectrahedron is $3$.
\medskip

\subsection{Binary sextics and Kummer surfaces}
The case of binary sextics has interesting connections to classical algebraic geometry. We fix the notation
\[
f \ \ = \ \ a_6s^6 -6a_5s^5t +15a_4s^4t^2 -20a_3s^3t^3 +15a_2s^2t^4 -6a_1st^5 + a_0t^6
\]
for the coefficients of $f$. Then $\GS(f)$ is the matrices in $\Sym_4^+$ of the form
\begin{equation}\label{eq:GramSextic}
\left(
\begin{array}[]{cccc}
a_6  & -3 a_5  & 3 a_4 + z & -a_3 -y \\
-3 a_5  & 9 a_4 -2 z & -9 a_3 +y & 3 a_2 +x \\
3 a_4 +z & -9 a_3 +y & 9 a_2 -2 x & -3 a_1  \\
-a_3 -y & 3 a_2 +x & -3 a_1  & a_0 
\end{array}
\right).
\end{equation}
We homogenize the linear forms in the entries of this matrix with
respect to a homogenizing variable $w$. Generically, the determinant
of this matrix defines a quartic surface in $\P^3$, a Kummer surface;
see \cite[Section 41]{CobleMR733252}. It has $16$ singular points,
which are nodes. Six of these nodes have homogeneous coordinates of
the form $(u_i^2 : u_i : 1 : 0)$, where $u_1,\dots,u_6$ are the distinct complex roots of $f(s,1)$. These nodes correspond to matrices of rank $3$. The remaining ten nodes give rank-$2$ matrices. These correspond to the $10$ distinct representations of $f$ as a sum of two squares over $\mathbb{C}$, see Proposition~\ref{prop:binaryforms} and \cite[Section 5]{QuarticSpectrahedra}. 

Kummer surfaces have several different descriptions and nice geometric properties, see \cite[Chapter 6]{GH}. For example, the dual projective variety of a Kummer surface is again a Kummer surface. With our choice of coordinates, its equation is \\[-2em]
\par\nobreak
{\small
\begin{align*}
 &F = W^2\bigl(-Y^2+4XZ\bigl) + 2W\bigl(a_0X^3+3a_1X^2Y+3a_2XY^2+a_3Y^3+3a_2X^2Z+6a_3XYZ\\
 &+3a_4Y^2Z+3a_4XZ^2+3a_5YZ^2+a_6Z^3\bigl)+ \bigl(9(a_0a_2-a_1^2)X^4 + 18(a_0a_3-a_1a_2)X^3Y\\
 &+3(5a_0a_4-2a_1a_3-3a_2^2)X^2Y^2+6(a_0a_5-a_2a_3)XY^3 + (a_0a_6-a_3^2)Y^4\\ 
 &+6(-a_0a_4+10a_1a_3-9a_2^2)X^3Z+ 6(-a_0a_5+12a_1a_4-11a_2a_3)X^2YZ  \\
&+  2(-a_0a_6+18a_1a_5-9a_2a_4-8a_3^2)XY^2Z + 6(a_1a_6-a_3a_4)Y^3Z + 9(a_4a_6-a_5^2)Z^4 \\
&+ (a_0a_6-18a_1a_5+117a_2a_4-100a_3^2)X^2Z^2 + 6(-a_1a_6+12a_2a_5-11a_3a_4)XYZ^2 \\
&+  3(5a_2a_6-2a_3a_5-3a_4^2)Y^2Z^2+ 6(-a_2a_6+10a_3a_5-9a_4^2)XZ^3 + 18(a_3a_6-a_4a_5)YZ^3\bigl),
\end{align*}}
\noindent which has the form $F = W^2F_2  + 2W  F_1 + F_0$, where $F_2,F_1, F_0$ are polynomials in $X,Y,Z$. 
This equation was computed symbolically by exploiting the $16_6$ configuration of the Kummer surfaces as follows. Each node of the dual Kummer surface defines a plane in the primal projective space, and each of these $16$ planes contains six nodes of the primal Kummer surface. The plane $\{w=0\}$ contains the six nodes $(u_i^2 : u_i : 1 : 0)$ given by the zeros of $f$. We find the remaining $15$ nodes of the dual Kummer surface by taking every combination of $6$ nodes that lie on a plane and computing its defining equation. Given the sixteen nodes of the dual Kummer surface, we find its equation by solving the linear system of equations in the coefficients imposing the condition that the surface is singular at the $16$ nodes. We find this equation first in terms of the roots $u_1,\ldots,u_6$ of $f$ and then express it in terms of $a_0,\ldots,a_6$, which is possible because the expression is symmetric in the $u_i$. 

Both the primal and dual surfaces are real Kummer surfaces with four real nodes and one can write an explicit 
real linear transformation taking one to the other.  
The dual Kummer surface has a node at the point $[0:0:0:1]$, corresponding to the hyperplane at 
infinity in the primal projective space.  Any real linear transformation $\check{\P}^3 \rightarrow \P^3$ taking the dual Kummer surface 
to the primal must map this node to one of the four real rank-2 Gram matrices of $f$. 
As discussed in Section~\ref{subsec:Binary}, the four real rank-2 Gram matrices of $f$ 
correspond to factorizations of $f$ as a Hermitian square $f = g\overline{g}$ and thus to 
partitions of the roots $u_1,u_2,u_3,u_4,u_5,u_6$ into two conjugate sets of three. 
The linear transformation from the dual to primal Kummer surface can be defined over the field 
extension of this split. For example, a transformation can be written symbolically in terms of the elementary symmetric polynomials of $u_1, u_2, u_3$ and $u_4, u_5, u_6$.
If the roots $\{u_1, u_2, u_3\}$ are conjugate to $\{u_4, u_5, u_6\}$, then this transformation is real and takes the node $[0:0:0:1]$ 
to the rank-2 Gram matrix of $f$ corresponding to the representation of $f$ as $g\overline{g}$ where $g = (t-u_1)(t-u_2)(t - u_3)$. 
A symbolic expression for such a linear transformation in these coordinates is available online
 (\url{http://www4.ncsu.edu/~clvinzan/misc/KummerTransformation.m2}). 

One intriguing consequence of the existence of this real linear
transformation is that the dual Kummer surface $V(F)$ also bounds a spectrahedron
(namely the image of $\GS(f)$ under the inverse of this map).  
However, this spectrahedron is \emph{not} the convex body dual to $\GS(f)$. 
The real points of the dual Kummer surface $V_{\R}(F)\subset \check{\P}^3(\R)$ can be written as the union of two semialgebraic subsets, 
each homeomorphic to a sphere and intersecting in the four real nodes. The inner sphere bounds 
a spectrahedron.  The outer sphere overlaps with the Euclidean boundary of the convex dual of $\GS(f)$. 
Specifically, this overlap consists of points in $\check{\P}^3$ whose corresponding hyperplane in $\P^3$ is tangent to $\GS(f)$ at a matrix of rank 3.

With the equation for $F$  in hand, we can determine the algebraic degree of semidefinite programming introduced in \cite{algDegNRS} 
over the Gram spectrahedron of a binary sextic and symbolically find the optimal value function, see \cite[Section~5.3]{RoStMR3050244}.
\begin{Prop}
The algebraic degree of semidefinite programming over the Gram spectrahedron of a general binary sextic is $(10,2)$ in ranks $(2,3)$.
\end{Prop}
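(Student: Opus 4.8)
The plan is to handle the two ranks in the Pataki interval separately. Since a binary sextic gives $N=4$ lattice points and an affine Gram space $\G(f)$ of dimension $3$, Proposition~\ref{Prop:Patakirange} shows that the ranks of extreme points of $\GS(f)$ lie in $\{2,3\}$, so the algebraic degree is a pair with one entry for each rank. Following \cite{algDegNRS}, the rank-$r$ algebraic degree is the number of complex matrices $X\in\G(f)$ of rank $r$ for which there exists $Y$ in the dual affine space $C+\G(0)^\perp$ with $XY=0$, for a generic cost $C$; equivalently, it is the degree of the algebraic function sending $C$ to the rank-$r$ optimal solution.

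For rank $2$, which is the minimum rank, Proposition~\ref{Prop:smallestrank}(a) shows these $X$ are isolated extreme points independent of $C$, namely the rank-$2$ matrices in $\G(f)$; by Proposition~\ref{prop:binaryforms} with $d=3$ there are $\tfrac12\binom{6}{3}=10$ of them over $\C$. For each such $X$, the equation $XY=0$ forces $Y$ into the $3$-dimensional space of symmetric matrices with image in $\ker X$, and intersecting this with the $7$-dimensional affine space $C+\G(0)^\perp\subset\Sym_4$ gives a unique $Y$ for generic $C$. Hence there are exactly $10$ such $X$, and the rank-$2$ degree is $10$. For a general binary sextic these $10$ matrices form a single Galois orbit, since they are indexed by the partitions of the six roots of $f$ into two triples and the Galois group of $f$ over the coefficient field is the symmetric group $S_6$, acting transitively on such partitions.

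For rank $3$ the matrices $X$ run over the corank-$1$ locus of $\G(f)$, which upon homogenizing the entries of \eqref{eq:GramSextic} with $w$ is the Kummer quartic $\mathcal K=V(\det)\subset\P^3$. For a generic cost $\ell=c_xx+c_yy+c_zz$, the rank-$3$ optimum occurs at a smooth point of $\mathcal K$ where the supporting hyperplane $\{\ell=tw\}$, with $t$ the optimal value, is tangent to $\mathcal K$; since $V(F)$ is the projective dual of $\mathcal K$, this means the point $(c_x:c_y:c_z:-t)$ of the dual $\P^3$ lies on $V(F)$. Writing $F=W^2F_2+2WF_1+F_0$ with $F_2=-Y^2+4XZ$ and $F_1,F_0\in\R[X,Y,Z]$ as computed above, this tangency reads $t^2F_2(c)-2tF_1(c)+F_0(c)=0$, a quadratic in $t$ whose leading coefficient $F_2(c)=-c_y^2+4c_xc_z$ is nonzero for generic $c$; since the Gauss map of a general Kummer surface is birational onto its dual, the contact point $X$ is a rational function of $(c,t)$. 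It therefore remains to show that the quadratic is irreducible over $\Q(c)$, i.e.\ that its discriminant $F_1^2-F_0F_2\in\R[c_x,c_y,c_z]$ is not a square; specializing the $a_i$ to generic rational values confirms this, and the rank-$3$ degree is exactly $2$.

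Geometrically, the drop from the a priori value $\deg V(F)=4$ to $2$ at rank $3$ occurs because the pencil $\{\ell=tw\}_t$ corresponds, in the dual $\P^3$, to a line through the node of $V(F)$ at $(0:0:0:1)$ --- the point representing the hyperplane at infinity of the primal $\P^3$ --- so that this node absorbs two of the four intersections with the quartic $V(F)$. The main obstacle is the rank-$3$ step: one must (i) justify that the generic optimum of $\GS(f)$ lies on the smooth part of its boundary, which holds because the $10$ rank-$2$ extreme points form a fixed finite set, so only finitely many cost directions can have a rank-$2$ optimum; (ii) make precise the duality identification, using that $\det$ of \eqref{eq:GramSextic} cuts out $\mathcal K$ and that $F$ cuts out $\mathcal K^\vee$ for a general binary sextic, both established above; and (iii) carry out the squarefreeness check for $F_1^2-F_0F_2$, which is the only genuinely computational point. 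The rank-$2$ step and the determination of the Pataki interval are routine.
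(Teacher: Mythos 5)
Your proposal is correct and follows essentially the same route as the paper: the rank-$2$ degree is the count of the ten complex rank-two Gram matrices from Proposition~\ref{prop:binaryforms} (all of which are KKT-critical for generic cost), and the rank-$3$ degree is read off from the explicit dual Kummer surface equation $F = W^2F_2 + 2WF_1 + F_0$, which restricts to a quadratic in $W$ along the line of hyperplanes determined by the cost vector. Your added remarks (the node at $(0:0:0:1)$ absorbing two of the four intersections, the nonvanishing of $F_2(c)$, and the irreducibility check) are consistent refinements of the paper's argument rather than a different method.
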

This means that the optimality conditions for a general linear form over the Kummer surface will give two critical points of rank $3$. 
A general linear form in the affine-linear space $\G(f)$ gives the last three coordinates $X,Y,Z$ on the dual projective space. 
The critical points are given by the two solutions in $W$ of the equation of the dual Kummer surface for these fixed values of $X,Y,Z$. 
Specifically, given a cost vector $c= (c_1,c_2,c_3)\in \Q^3$, the rank-3 critical points $(x,y,z)$ satisfy  
\[ c_1x+c_2y+c_3z  \ \ = \ \ \frac{- 2F_1(c)  \pm \sqrt{F_1(c)^2 - 4\cdot F_0(c)\cdot F_2(c) }} {2 F_2(c)}.\]
One can use the KKT equations to solve for the two corresponding rank-$3$ matrices, using linear algebra,
and thus write down these matrices over a quadratic field extension of $\Q$. 
The optimal point on the Gram spectrahedron is either one of these two rank-$3$ matrices or one of the ten 
matrices of rank $2$, which are all critical points of this optimization problem.

\begin{figure}
\begin{center} 
\includegraphics[height=1.2in]{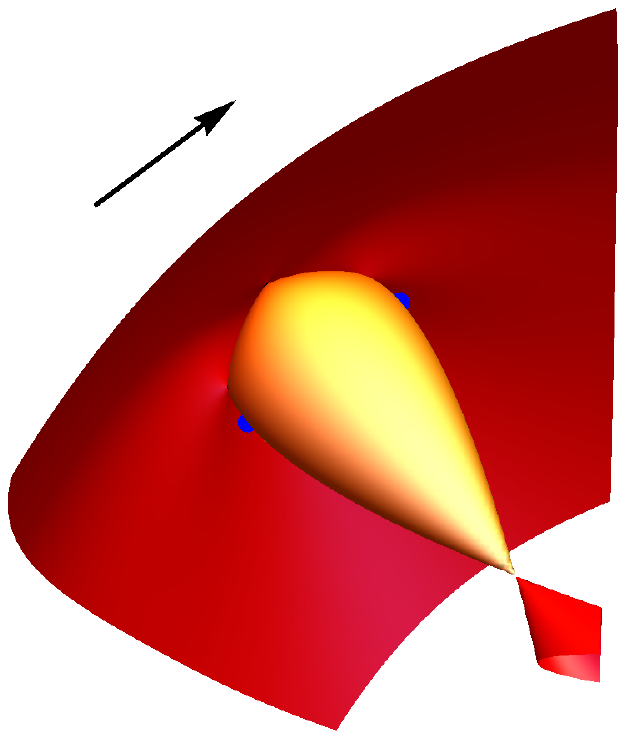} \ \ \ \ \
\includegraphics[height=1.2in]{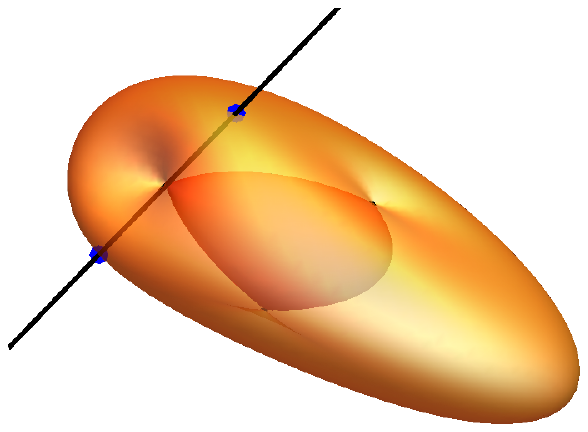} \ \ \ \ \
\includegraphics[height=1.2in]{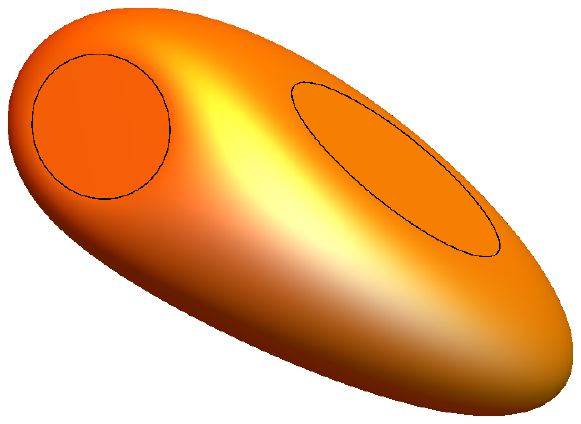}
\end{center}
\caption{\label{fig:Gram} 
The Kummer surface bounding $\GS(f)$, dual Kummer surface, and dual convex body of $\GS(f)$ from Example~\ref{ex:BinarySextic}. 
}
\end{figure}

\begin{Exm}\label{ex:BinarySextic}
Consider the binary sextic 
\[
f = s^6 -2s^5t +5s^4t^2 -4s^3t^3 +5s^2t^4 -2st^5 + t^6
\]
and the linear form $x-z$ on the spectrahedron $\GS(f)$, shown in Figure~\ref{fig:Gram}. This gives the coordinates $c =(X,Y,Z) = (1,0,-1)$ on the dual projective space.
The critical points are given by the two solutions in $W$ of the equation of the dual Kummer surface, which are $W= \pm \sqrt{5}$. 
We can solve for the corresponding points in the Kummer surface over $\Q[\sqrt{5}]$ 
and find $(x,y,z) = -\bigl((1\pm\sqrt{5} )/2, 1/5, (1 \mp \sqrt{5} )/2\bigr)$, corresponding to the rank-$3$ Gram matrices
\[
{\begin{pmatrix}
1& -1& \frac{1+\sqrt{5}}{2}& 0\\
 -1& 4-\sqrt{5}& -2& \frac{1-\sqrt{5}}{2}\\
\frac{1+\sqrt{5}}{2}& -2& 4+\sqrt{5}& -1\\
  0& \frac{1-\sqrt{5}}{2}& -1& 1
\end{pmatrix}}  
\ \ \ \text{ and } \ \ \ 
{\begin{pmatrix}
1& -1& \frac{1-\sqrt{5}}{2}& 0\\
 -1& 4+\sqrt{5}& -2& \frac{1+\sqrt{5}}{2}\\
\frac{1-\sqrt{5}}{2}& -2& 4-\sqrt{5}& -1\\
 0& \frac{1+\sqrt{5}}{2}& -1& 1
\end{pmatrix}.}
\]
In this case, the rank-$3$ matrices corresponding to these critical points 
are both positive semidefinite and actually maximize and minimize
$x-z$ over $\GS(f)$. Each of the four real rank-2 matrices in $\GS(f)$ gives a hyperplane forming part of the 
algebraic boundary of the dual convex body. 

The dual Kummer surface $V(F)\subset \check{\P}^3$ again bounds a spectrahedron that 
lies strictly inside the convex body dual to $\GS(f)$.  We can see this 
by performing a real change of coordinates that takes the original Kummer surface to $V(F)$. 
In this example, $F$ equals the determinant of the matrix \eqref{eq:GramSextic}, up to a scalar multiple,
on the affine chart $X - Y/5 - Z +W= 1$ taking 
\[
x = X+Y+Z+1, \ \  y = X+Y-Z-\frac{1}{5}, \ \ z = X-Y-3 Z-1.
\]
\end{Exm}

\subsection{Ternary Quartics}
The case of ternary quartics ($n=3$, $2d=4$) has an interesting
history, starting from Hilbert's famous theorem
\cite{HilbertMR1510517} that every nonnegative ternary quartic is a sum of three squares. In
\cite{PRMR1803369}, Powers and Reznick determined, via computational experiments, that 
a generic positive ternary quartic has $63$ complex, $15$ real, and $8$ real positive semidefinite 
Gram matrices of rank $3$. They later proved these counts, together
with Scheiderer and Sottile, in \cite{MR2103198}.

The Gram spectrahedron $\GS(f)$ of a positive ternary quartic
$f\in\R[x,y,z]_4$ is a six-dimensional spectrahedron in the
$21$-dimensional space $\Sym_6$. Thus any boundary
point is a Gram matrix of rank at most $5$, so that $f$ is a sum of at most $5$
squares. In fact, an intriguing result of Barvinok in \cite{Bar01} about
compact spectrahedra of a particular codimension can be applied here
to show that $\GS(f)$ contains a Gram matrix of rank at most
$4$. However, the number $3$ coming from Hilbert's theorem has not
been obtained using methods from general convexity alone.

Gram spectrahedra of ternary quartics were also studied by Plaumann,
Sturmfels, and Vinzant in \cite[\S 6]{PSV}. It is shown there that, for
generic $f$, the eight \textit{vertices} of $\GS(f)$, i.e.~the
positive semidefinite Gram matrices of rank $3$, can be divided into
two groups of four such that the line segment between two vertices is
contained in the boundary of $\GS(f)$ if and only if the two vertices
belong to the same group. These edges consist of Gram matrices of
rank $5$. This involves a detailed analysis of the combinatorial
structure of the bitangent lines of the plane quartic curve defined by
$f$, which can also be used to identify the vertices of $\GS(f)$.

The algebraic degree of semidefinite programming over the Gram
spectrahedron of a general ternary quartic was also studied
experimentally in \cite{PSV}. The authors observed that the
algebraic degree of the rank-5-locus seems to be $1$ (while it is $32$ for
general spectrahedra of dimension $6$ in $\Sym_6(\R)$). In particular,
this implies that a general rational ternary quartic possesses a
rational Gram matrix
of rank $5$. We believe that the geometric explanation for this fact
should be similar to what we observed in the case of Kummer surfaces
above, but it seems harder to derive an explicit formula analogous to
the equation of the dual Kummer surface that we found in terms of the coefficients of the binary sextic.

\begin{Ques} What is a formula for the hypersurface dual to the boundary of the Gram spectrahedron of a general 
ternary quartic? 
\end{Ques}

\section{Hermitian Gram spectrahedra}\label{sec:Hermitian}

Another way to certify nonnegativity of a polynomial
$f\in \R[\x]_{2P}$ is to write it as a \emph{Hermitian sum of
  squares}, $f=p_1\overline{p_1}+\cdots+p_r\overline{p_r}$,
where $p_1, \hdots, p_r \in \C[\x]$.  Overline denotes the action of 
complex conjugation on the coefficients of $p$,
i.e.~$\ol{\sum_\alpha c_{\alpha} x^{\alpha} } = \sum_\alpha
\ol{c_{\alpha}} x^{\alpha}$.
A Hermitian square $p\cdot \ol{p}$ is a sum of two squares
$\Re(p)^2 + \Im(p)^2$ of real polynomials $\Re(p) = (p+\ol{p})/2$ and
$\Im(p) = (p-\ol{p})/2i$. Thus a real polynomial is a real sum of
squares if and only if it is a Hermitian sum of squares. However,
Hermitian sums of squares enjoy some technical advantages for certain questions.  
As in the real case, the existence of
a Hermitian sum-of-squares representation is equivalent to the
feasibility of a (Hermitian) semidefinite program. 
Let $\Herm_N$ denote the space of complex Hermitian $N\times N$ matrices and 
$\Herm_N^+$ the cone of positive semidefinite matrices in $\Herm_N$. 

\begin{Prop}\label{Prop:HermGramMethod}
A polynomial $f\in\R[\x]_{2P}$ is a sum of $r$ Hermitian squares if
and only if there is a matrix $A\in\Herm_N^+$ of rank at most $r$ such that
\[
f = \m_P^t A \m_P.
\]
\end{Prop}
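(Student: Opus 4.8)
This is the Hermitian analogue of Proposition~\ref{Prop:GramMethod}, and the plan is to imitate its proof, with the principal axes theorem replaced by the spectral theorem for Hermitian matrices and a real square $(c^t\m_P)^2$ replaced by a Hermitian square $(c^t\m_P)\cdot\overline{(c^t\m_P)}$. I will use repeatedly that the entries of $\m_P$ are real monomials, so that $\overline{c^t\m_P}=\overline{c}^{\,t}\m_P$ for every $c\in\C^N$.

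For the ``only if'' direction, let $f=p_1\overline{p_1}+\cdots+p_r\overline{p_r}$ with $p_1,\dots,p_r\in\C[\x]$. First I would show that $\new(p_i)\subseteq P$ for every $i$. The cleanest way is to note that $p_i\overline{p_i}=\Re(p_i)^2+\Im(p_i)^2$, so $f$ is an ordinary sum of $2r$ real squares; applying Proposition~\ref{Prop:newton} gives $\new(\Re(p_i)),\new(\Im(p_i))\subseteq P$, and since the support of $p_i$ is the union of the supports of $\Re(p_i)$ and $\Im(p_i)$, we get $\new(p_i)\subseteq P$. (Alternatively one repeats the proof of Proposition~\ref{Prop:newton} with the coefficient of $x^{2\alpha}$ now equal to $\sum_i\lvert a_\alpha^i\rvert^2$.) Now write $p_i=c_i^t\m_P$ with $c_i\in\C^N$ and set $A=\sum_{i=1}^r c_i\,\overline{c_i}^{\,t}$. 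One then checks in turn: $A$ is Hermitian since $(c_i\overline{c_i}^{\,t})^*=c_i\overline{c_i}^{\,t}$; $\rank(A)\le r$; $A$ is positive semidefinite because $v^*Av=\sum_{i=1}^r\lvert v^*c_i\rvert^2\ge 0$ for all $v\in\C^N$; and $\m_P^tA\m_P=\sum_{i=1}^r(c_i^t\m_P)(\overline{c_i}^{\,t}\m_P)=\sum_{i=1}^r p_i\overline{p_i}=f$, using $\overline{c_i}^{\,t}\m_P=\overline{p_i}$.

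For the ``if'' direction, suppose $A\in\Herm_N^+$ has rank $s\le r$ and $f=\m_P^tA\m_P$. By the spectral theorem for Hermitian matrices, $A=\sum_{i=1}^s\lambda_i\,u_iu_i^*$ with $\lambda_i>0$ and $u_1,\dots,u_s\in\C^N$ orthonormal. Setting $c_i=\sqrt{\lambda_i}\,u_i$ and $p_i=c_i^t\m_P$, we have $A=\sum_{i=1}^s c_i\overline{c_i}^{\,t}$, and the computation from the previous paragraph yields $f=\sum_{i=1}^s p_i\overline{p_i}$, a representation of $f$ as a sum of $s\le r$ Hermitian squares; padding with $r-s$ zero summands gives one of length exactly $r$ if desired.

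I do not expect a genuine obstacle here: the content is entirely parallel to Proposition~\ref{Prop:GramMethod}, and the only places demanding care are purely notational --- keeping conjugates and transposes aligned (which is why I record $\overline{\m_P}=\m_P$ at the outset), checking that the rank-one Hermitian matrices $c_i\overline{c_i}^{\,t}$ are actually positive semidefinite, and, in the Newton polytope step, observing that passing to real and imaginary parts reduces the complex statement to the already-proved real Proposition~\ref{Prop:newton}.
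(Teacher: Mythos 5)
Your proof is correct and is exactly the argument the paper intends: the paper itself gives no details, saying only that ``the proof is analogous to that of Proposition~\ref{Prop:GramMethod},'' and your write-up carries out that analogy faithfully (Newton polytope control via real and imaginary parts, the rank-one decomposition $A=\sum_i c_i\overline{c_i}^{\,t}$, and the spectral theorem in place of the principal axes theorem). No gaps.
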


The proof is analogous to that of Proposition~\ref{Prop:GramMethod}.
%
We can use positive semidefinite Hermitian Gram matrices to represent equivalence classes of representations as Hermitian sums of squares, modulo the action of the group of unitary matrices. 

\begin{Def}\label{Def:HermGramSpec} If $\dim(\R[\x]_P) = N$, then for $f\in\R[\x]_{2P}$, we define
\[ \h(f) \ = \  \{A \in \Herm_N\ \colon \  \m_P^t A \m_P = f \} \ \ \text{ and } \ \ \HS(f) \ = \ \h(f) \cap \Herm_N^+  \]
to be the space of \emph{Hermitian Gram matrices} and the \emph{Hermitian Gram spectrahedron} of $f$, 
respectively.  The \emph{Hermitian (sum-of-squares) length} of $f\in\R[\x]_{2P}$ is the minimum rank in $\HS(f)$, 
i.e.~the smallest $r$ for which $f = p_1\overline{p_1}+\cdots+p_r\overline{p_r}$.
\end{Def}
%

\begin{Prop}\label{prop:HermMinRank}
If the real length of $f\in \R[\x]_{2P}$ is $r$, then its Hermitian length is $\lceil r/2 \rceil$. 
If the Hermitian length of $f$ is $k$, its
real length  is $2k-1$ or $2k$.
\end{Prop}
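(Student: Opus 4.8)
The plan is to pass back and forth between real and Hermitian sum-of-squares representations by explicit algebraic identities, and then to pin down the exact numbers by a short parity argument. Write $r$ for the real length and $k$ for the Hermitian length of $f$; if $f$ is not a sum of squares then both are infinite and there is nothing to prove, so assume $f\in\Sigma_{2P}$.

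First I would show $k\le\lceil r/2\rceil$. Starting from a shortest real representation $f=g_1^2+\cdots+g_r^2$ (with each $\new(g_i)\subseteq P$ by Proposition~\ref{Prop:newton}), I pair consecutive summands and apply $g^2+h^2=(g+ih)\,\overline{(g+ih)}$ to turn each pair into a single Hermitian square. When $r$ is even this produces $r/2$ Hermitian squares; when $r$ is odd, the leftover real term $g_r^2=g_r\overline{g_r}$ is already a Hermitian square, giving $(r+1)/2$ in total. Either way $f$ is a sum of $\lceil r/2\rceil$ Hermitian squares.

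Next I would show $r\le 2k$. Starting from a shortest Hermitian representation $f=p_1\overline{p_1}+\cdots+p_k\overline{p_k}$, I expand each summand via $p\overline{p}=\Re(p)^2+\Im(p)^2$ (the identity recorded just before the statement), which exhibits $f$ as a sum of $2k$ real squares. The Newton-polytope constraints $\new(\Re(p_j)),\new(\Im(p_j))\subseteq P$ follow from Proposition~\ref{Prop:newton} applied to this real representation, whose Newton polytope is $\new(f)\subseteq 2P$, so these are genuine elements of $\R[\x]_P$.

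Finally I would combine the two bounds: from $r\le 2k$ and the integrality of $k$ one gets $\lceil r/2\rceil\le k$, and together with $k\le\lceil r/2\rceil$ this forces $k=\lceil r/2\rceil$, which is the first assertion. The second assertion is then immediate arithmetic: applying the first assertion to $f$ gives $\lceil r/2\rceil=k$, and this is equivalent to $r\in\{2k-1,2k\}$. I do not expect a genuine obstacle here; the only points requiring care are the even/odd case split in the first step and the bookkeeping of Newton polytopes, both handled uniformly by Proposition~\ref{Prop:newton}.
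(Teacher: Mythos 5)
Your proposal is correct and takes essentially the same route as the paper: one direction by pairing real squares via $g^2+h^2=(g+ih)\overline{(g+ih)}$, the other by splitting each Hermitian square into $\Re(p)^2+\Im(p)^2$ (the paper phrases this second step equivalently as replacing a minimal-rank Hermitian Gram matrix $Q$ by $(Q+\overline{Q})/2$, whose rank is at most $2\,\mathrm{rank}(Q)$). The concluding arithmetic deriving both assertions from the two inequalities is also the same.
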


\begin{proof}
First, let $r$ be the real length of $f$. Fix a positive semidefinite Hermitian Gram matrix $Q \in \HS(f)$ of $f$ 
with minimum rank. Then $(Q+\ol{Q})/2$ belongs to $\GS(f)$ and has rank at most $2\rank(Q)$. 
Therefore $r \leq 2\rank(Q)$, which gives that $\lceil r/2 \rceil \leq \rank(Q)= \min\{\rank( \HS(f))\}$. 
For the reverse inequality, take a representation $f = p_1^2 + \hdots + p_r^2$ as a sum of $r$ real squares. We can write $f$ as
\[f \  = \   \sum_{k=1}^{\lfloor r/2\rfloor }(p_{2k-1} + i p_{2k})(p_{2k-1} - i p_{2k}) \ + \ \delta_{\text{odd}} \cdot p_r^2,\]
where $\delta_{\text{odd}}$ is 1 if $r$ is odd and $0$ otherwise. This gives a representation of $f$ as a sum of $\lceil r/2 \rceil$ 
Hermitian squares. Hence $\lceil r/2 \rceil \geq  \min\{\rank( \HS(f))\}$. 

If $k$ is the Hermitian length of $f$, then
$f$ is a sum of $2k$ real squares but not of $2(k-1)$ real
squares. So the real length of $f$ is either $2k$ or
$2k-1$.
\end{proof}

Combined with Theorem~\ref{thm:BPSV}, this gives the following: 

\begin{Cor} \label{cor:HermLength}
Let $P\subset\R^n$ be an $n$-dimensional lattice polytope with
vertices in $\Z_{\ge 0}^n$ and suppose that every 
nonnegative polynomial with Newton polytope $2P$ is a sum of squares. The lowest rank of a positive semidefinite 
Hermitian Gram matrix of a general nonnegative polynomial $f\in\R[\x]_{2P}$ is $\lceil (n+1)/2\rceil$.\qed
\end{Cor}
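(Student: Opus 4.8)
The plan is to read this off from Theorem~\ref{thm:BPSV} together with Proposition~\ref{prop:HermMinRank}, so the argument is essentially a one-line deduction. By hypothesis every nonnegative polynomial in $\R[\x]_{2P}$ is a sum of squares, so Theorem~\ref{thm:BPSV} applies: the minimum rank of a positive semidefinite Gram matrix of a general nonnegative $f\in\R[\x]_{2P}$ is $n+1$, that is, the real sum-of-squares length of such an $f$ equals $n+1$. I would then invoke the first assertion of Proposition~\ref{prop:HermMinRank}, which states that whenever the real length of $f$ is $r$, its Hermitian length --- by Definition~\ref{Def:HermGramSpec}, the minimum rank in $\HS(f)$ --- is $\lceil r/2\rceil$. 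Substituting $r = n+1$ gives that the lowest rank of a positive semidefinite Hermitian Gram matrix of a general nonnegative $f$ is $\lceil (n+1)/2\rceil$.

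If one prefers to see the two bounds separately rather than quote the exact formula of Proposition~\ref{prop:HermMinRank}, the upper bound follows by grouping a representation $f = p_1^2 + \dots + p_{n+1}^2$ into $\lceil (n+1)/2\rceil$ Hermitian squares $(p_{2k-1} + i p_{2k})\overline{(p_{2k-1}+i p_{2k})}$ (with a leftover real square when $n+1$ is odd), exactly as in the proof of Proposition~\ref{prop:HermMinRank}; and the lower bound follows from the second assertion there, since a Hermitian length $k$ forces real length $2k$ or $2k-1$, hence $2k \ge n+1$ and $k \ge \lceil (n+1)/2\rceil$, using that the real length is $n+1$ for general $f$.

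The only subtlety worth flagging --- and it is not really an obstacle --- is the word \emph{general}: Theorem~\ref{thm:BPSV} supplies a generic family of nonnegative polynomials with real length exactly $n+1$, and since the identity of Proposition~\ref{prop:HermMinRank} holds pointwise for every polynomial, it transfers verbatim to this family. Thus nothing further is needed; the main content has already been established in Theorem~\ref{thm:BPSV} and Proposition~\ref{prop:HermMinRank}, and the corollary is their immediate combination.
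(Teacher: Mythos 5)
Your proposal is correct and is exactly the paper's argument: the paper states the corollary as an immediate combination of Theorem~\ref{thm:BPSV} (real length $n+1$ for general $f$) with Proposition~\ref{prop:HermMinRank} (Hermitian length equals $\lceil r/2\rceil$ when the real length is $r$), giving it no further proof. Your additional remarks on the two separate bounds and on genericity are consistent with the proofs of those two results and introduce nothing new.
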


Interestingly, this is not always the minimum rank in the \emph{Hermitian Pataki interval}.
Arguments analogous to the proof of Proposition~\ref{Prop:Patakirange} show that  
for a generic affine-linear space $L \subset \Herm_N$ 
of codimension $c$, the rank $r$ of an extreme point of $L\cap \Herm^+_N$ satisfies
\[
(N-r)^2 + c \ \leq \ N^2  \ \ \text{ and } \ \ r^2 \ \leq \ c.
\]
If $P\subset\R^n$ is one of the polytopes in
Theorem~\ref{thm:NewtonPolytopes} with $N$ lattice points, then the
dimension of $\R[\x]_{2P}$ is $(n+1)N - \binom{n+1}{2}$. For
$f\in \R[\x]_{2P}$, this is the codimension of the affine-linear space
$\h(f)$ in $\Herm_N$.  After simplifying, we then
find that the minimum rank in the Hermitian Pataki interval is the
minimum $r$ such that
\[(1+n-2r)N + r^2 - 2r +n(n+1)/2 \ \leq \  0.\]
The rank $\lceil (n+1)/2\rceil$ satisfies this inequality. Furthermore,  
for fixed $n$ and sufficiently large $N$,  the term $(1+n-2r)N$ dominates the expression above, 
in which case $\lceil (n+1)/2\rceil$ is the minimum rank in the Hermitian Pataki interval.

\begin{Exm}Let $P$ be the polytope $\Delta_5 \times [0,1]$, i.e. 
the Cayley polytope of Theorem~\ref{thm:NewtonPolytopes}(2) with $d_1=\hdots =d_6=1$. 
This is a $6$-dimensional polytope with $N = 12$ lattice points. 
For  general $f\in \R[\x]_{2P}$, the space of Gram matrices $\h(f)$ 
has codimension $63$ in  $\Herm_{12}$. The Pataki interval is $\{3,4,5,6,7\}$. 
However, by Corollary~\ref{cor:HermLength}, 
the minimum rank of a matrix in $\HS(f)$ is $\lceil (n+1)/2 \rceil = 4$. 

After increasing $N$ to $13$, the minimum rank in the Hermitian Pataki
interval is indeed $4$.  For example, let $P$ be the Cayley polytope
with $d_1 = 2$ and $d_2=\hdots =d_6=1$.  Given $f \in \R[\x]_{2P}$,
the affine-linear space $\h(f)$ has codimension $70$ in $\Herm_{13}$.
The Hermitian Pataki interval for $N=13$, $c=70$ is $\{4,5,6,7,8\}$.
\end{Exm}

One benefit of the Hermitian perspective is that the kernels of
Hermitian matrices are complex linear spaces, rather than real.  For
the results below, it will be particularly useful to consider kernels
of Hermitian Gram matrices coming from complex points, i.e.~vectors of
the form $\m_P(x)$ for $x\in \C^n$.  We can use this to understand
some faces of Hermitian Gram spectrahedra. 

\begin{Prop}\label{prop:HermFactor}
If $f\in \R[\x]_{2P}$ factors as $g\overline{g} \cdot h$, where $g\in \C[\x]$ and $h\in \R[\x]$, 
then $\HS(h)$ is linearly isomorphic to a face of $\HS(f)$. 
If $g$ is square-free, this face consists of matrices whose kernel
contains the vectors $m_P(x)$ for $x\in\V_\C(g)$.
\end{Prop}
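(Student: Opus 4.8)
The plan is to realize the face as the image of multiplication by $\overline g$. Let $P'$ be the lattice polytope with $\R[\x]_{P'}=\{q\in\R[\x] : \new(gq)\subseteq P\}$ (equivalently, $P'$ is the intersection of the translates $P-v$ over the vertices $v$ of $\new(g)$; a short Minkowski‑sum computation using $\new(f)=2\new(g)+\new(h)\subseteq 2P$ gives $\new(h)\subseteq 2P'$, so that $\HS(h)$ makes sense with respect to $\m_{P'}$). Write $N'=\dim\R[\x]_{P'}$. Multiplication by $\overline g$ is an injective linear map $\C[\x]_{P'}\to\C[\x]_P$; let $M\in\Mat_{N\times N'}(\C)$ be its matrix on coefficient vectors, so that $M^t\m_P=\overline g\,\m_{P'}$, and hence -- conjugating, and using that $\m_P,\m_{P'}$ have real entries -- also $M^*\m_P=g\,\m_{P'}$. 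Define $\Phi\colon\Herm_{N'}\to\Herm_N$ by $\Phi(B)=MBM^*$. For $B\in\HS(h)$ one computes $\m_P^t\Phi(B)\m_P=(\m_P^tM)\,B\,(M^*\m_P)=\overline g\,g\,(\m_{P'}^tB\m_{P'})=g\overline g h=f$, while $MBM^*\succeq 0$ by congruence; thus $\Phi$ maps $\HS(h)$ into $\HS(f)$. Since $M$ has full column rank, $\Phi$ is injective, so it restricts to an affine‑linear isomorphism from $\HS(h)$ onto its image, and it remains to identify that image as a face of $\HS(f)$.

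The heart of the argument is the equality $\Phi(\HS(h))=\{A\in\HS(f):\ker A\supseteq\ker M^*\}$. One inclusion is clear, since $\ker(MBM^*)\supseteq\ker M^*$ for every $B$. For the other, take $A\in\HS(f)$ with $\ker A\supseteq\ker M^*$; then $\im A=(\ker A)^\perp\subseteq(\ker M^*)^\perp=\im M$ (orthogonal complements for the standard Hermitian form). Taking a spectral decomposition $A=\sum_i\lambda_i u_iu_i^*$ with $\lambda_i>0$ and $u_i\in\im A$, write $u_i=Mv_i$ (unique, as $M$ is injective) and set $C=\sum_i\lambda_i v_iv_i^*\succeq 0$, so that $A=MCM^*$. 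Then $g\overline g\,(\m_{P'}^tC\m_{P'})=\m_P^tA\m_P=f=g\overline g h$, and cancelling $g\overline g$ in the integral domain $\C[\x]$ gives $\m_{P'}^tC\m_{P'}=h$, i.e.\ $C\in\HS(h)$ and $A=\Phi(C)$. Finally, $F_V:=\{A\in\Herm_N^+:\ker A\supseteq V\}$ is a face of $\Herm_N^+$ for every subspace $V\subseteq\C^N$ (if $v^*Av=0$ and $A$ is an average of $A_1,A_2\succeq 0$, then $v^*A_iv=0$, so $A_iv=0$), and $\HS(f)\cap F_V$ is then a face of $\HS(f)$; taking $V=\ker M^*$ proves that $\HS(h)$ is (linearly isomorphic to) a face of $\HS(f)$.

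The square‑free hypothesis enters only in identifying $\ker M^*$ with $\linspan_\C\{\m_P(x):x\in\V_\C(g)\}$. Evaluating $M^*\m_P=g\,\m_{P'}$ at a point $x$ gives $M^*\m_P(x)=g(x)\,\m_{P'}(x)$, so $\m_P(x)\in\ker M^*$ whenever $g(x)=0$; this is one containment. For the reverse it suffices to match dimensions: $\dim\ker M^*=N-\rank M=N-N'$, whereas $\dim\linspan\{\m_P(x):x\in\V_\C(g)\}=N-\dim\bigl(\C[\x]_P\cap I(\V_\C(g))\bigr)$, and when $g$ is square‑free the Nullstellensatz gives $I(\V_\C(g))=(g)$, so $\C[\x]_P\cap I(\V_\C(g))=g\cdot\C[\x]_{P'}$ by the definition of $P'$, a space of dimension $N'$. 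Hence the face equals $\{A\in\HS(f):A\,\m_P(x)=0\ \text{for all}\ x\in\V_\C(g)\}$, as claimed.

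The main obstacle is the reverse inclusion in the second paragraph: one must upgrade the vector‑space containment $\im A\subseteq\im M$ to an honest factorization $A=MCM^*$ through a \emph{Hermitian positive semidefinite} $C$ -- the spectral decomposition accomplishes this, the key point being that it is $\im A$ (not merely $\ker A$) that is controlled -- and then invoke cancellation in $\C[\x]$ to recognize $C$ as a Gram matrix of $h$. A secondary but easily overlooked technical point is that one must multiply by $\overline g$ rather than by $g$, so that it is $M^*$ (and not $M^t$) which kills the evaluation vectors $\m_P(x)$ at points of $\V_\C(g)$, together with the precise choice of the polytope $P'$, which is exactly what makes $\C[\x]_P\cap(g)=g\cdot\C[\x]_{P'}$ hold on the nose.
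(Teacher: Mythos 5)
Your proof is correct and follows essentially the same route as the paper's: realize the face as the congruence image of $\HS(h)$ under the matrix of multiplication by $\overline{g}$, characterize it by the kernel condition at points of $\V_\C(g)$, and use that the polynomials in $\C[\x]_P$ vanishing on $\V_\C(g)$ are exactly $g$ times polynomials supported in the smaller polytope. If anything, you are more explicit than the paper on two points it glosses over — the spectral-decomposition argument producing the positive semidefinite factor $C$ with $A=MCM^*$, and the care with conjugation ensuring that it is $M^*$ that annihilates the evaluation vectors $\m_P(x)$.
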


\begin{proof}
By induction, it suffices to consider a square-free factor $g$. 
Without loss of generality, we can also assume that $2P$ equals the Newton polytope of $f$. 
Then $2P$ is the Minkowski sum of the polytopes $2\new(g)$ and $\new(h)$. 
Therefore, we can write $\new(h)$ as $2Q$ for some $Q\subset \R^n$ with integer vertices.
We see that $P$ is the Minkowski sum $\new(g)+Q$. 

The polynomials $\{x^\alpha \cdot g : \alpha \in Q\}$ are contained in $\C[\x]_P$.  
Thus there is a complex matrix $U$ such that $\m_P^t U = g \cdot \m_Q^t$. Let $L$
be the linear space of matrices in $\Herm_N$ whose kernels
contain the vectors $\{\m_P(x) : x\in \C^n, \ g(x)=0\}$.  Then 
\[\HS(f) \cap L \ \ = \ \ U \HS(h) U^*.  \]
For any matrix $A\in \HS(h)$, the matrix $B = U A U^*$ is positive
semidefinite.  Since
$\m_P^t B \m_P=g \cdot \m_Q^t A \m_Q \cdot \overline{g}=f$, we
conclude that $B$ lies
in $\HS(f)\cap L$.
 
Conversely, suppose that $B$ belongs to $ \HS(f)\cap L$.  Each entry
of the vector $m_P^t B$ vanishes on $\V_{\C}(g)$.  The polynomials
$\{g \cdot x^{\alpha}: \alpha\in Q \}$ form a basis for the space of
polynomials in $\C[\x]_{P}$ vanishing on the variety of $g$.  Indeed,
if $q\in\C[\x]_P$ vanishes on $\V_{\C}(g)$, then $g$ divides $q$,
since $g$ is square-free.  As $\new(q)$ is contained in
$P = \new(g) + Q$, we conclude that $\new(q/g)$ is contained in $Q$.
Therefore, the linear space spanned by the entries of $m_P^t B$ is
contained in ${\rm span}\{g \cdot x^{\alpha}: \alpha\in Q \}$.  This
implies that the column span of $B$ is contained in the column span of
$U$.  We can then write $B$ as $UAU^*$ for some positive semidefinite
matrix $A$.  Since $B$ is a Gram matrix of $f$, $A$ must be a Gram
matrix of $h$.
\end{proof}

This is particularly interesting for Gram spectrahedra of nonnegative binary forms, 
which factor completely into Hermitian squares. 

\begin{Cor}\label{cor:HermGramRankSum}
Let $f \in \R[x_1, x_2]_{2d}$ be a positive binary form with
distinct roots. Then $\HS(f)$ contains $2^d$ matrices of 
rank one. The sum of rank-one matrices $v_1v_1^*, \hdots, v_sv_s^*$ in $\HS(f)$ satisfies
\[
{\rm rank}\left(\sum\nolimits_{k=1}^s v_kv_k^*\right) \ \ \leq \ \ d+1 -
\deg(\gcd(p_1, \hdots, p_s))
\]
where $p_k = m_d^t v_k \in \C[x_1, x_2]_d$ for each $k=1, \hdots, s$. 
For each $2\leq s\leq 2^d$, there are $s$ rank-one
matrices in $\HS(f)$ whose sum has rank at most $\lceil \log_2(s) \rceil +1$.
\end{Cor}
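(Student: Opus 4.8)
The plan is to reduce everything to elementary facts about binary forms and spans of polynomials. First I would identify the rank-one matrices of $\HS(f)$ explicitly: a rank-one matrix $vv^*\in\HS(f)$ is the same data as a Hermitian square decomposition $f=p\ol p$ with $p=m_d^tv$, and $f(1,0)>0$ forces $\deg p=d$. As $f$ is positive it has no real roots, so its $2d$ distinct roots form $d$ complex-conjugate pairs $\{z_j,\ol{z_j}\}$, and up to a unimodular scalar $p=\prod_{j=1}^d(x_1-w_jx_2)$, where exactly one root $w_j\in\{z_j,\ol{z_j}\}$ is chosen from each pair. The $2^d$ choices give $2^d$ pairwise distinct polynomials $p$, hence $2^d$ distinct rank-one matrices $vv^*$ (since $vv^*$ determines $p$ up to a unimodular scalar); this proves the first assertion.

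For the rank inequality, set $B=[v_1\mid\dots\mid v_s]$, so $\sum_k v_kv_k^*=BB^*$, and $\rank(BB^*)=\rank(B)$ over $\C$, whence $\rank\bigl(\sum_k v_kv_k^*\bigr)=\dim_\C\linspan\{v_1,\dots,v_s\}$. Since the coordinate map $p\mapsto v$ is a linear isomorphism $\C[x_1,x_2]_d\to\C^{d+1}$, this equals $\dim_\C\linspan\{p_1,\dots,p_s\}$. If $g=\gcd(p_1,\dots,p_s)$ has degree $e$, every $p_k$ lies in the subspace $g\cdot\C[x_1,x_2]_{d-e}$, which has dimension $d-e+1$; therefore $\rank\bigl(\sum_k v_kv_k^*\bigr)\le d+1-e$, as claimed. (Alternatively one may invoke Proposition~\ref{prop:HermFactor} with $h=f/(g\ol g)$, observing that the $v_kv_k^*$ all lie in the copy of $\HS(h)$ inside $\HS(f)$, a spectrahedron of $(d-e+1)\times(d-e+1)$ matrices; the span computation is more direct.)

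For the logarithmic estimate I would use the combinatorial picture from the first paragraph: a rank-one matrix corresponds to a vertex $\varepsilon\in\{0,1\}^d$ recording which root of each conjugate pair is assigned to $p$, and for a family of such matrices the gcd of the corresponding polynomials is the product of the factors $(x_1-w_jx_2)$ over exactly the coordinates $j$ on which all the vertices agree. Given $s$ with $2\le s\le 2^d$, put $m=\lceil\log_2 s\rceil\le d$, fix arbitrary values on some $d-m$ of the coordinates, and let the remaining $m$ coordinates vary freely; this yields $2^m\ge s$ distinct rank-one matrices of $\HS(f)$ whose polynomials share a common factor of degree $d-m$. Picking any $s$ of them, the gcd of their polynomials has degree at least $d-m$, so by the rank inequality their sum has rank at most $d+1-(d-m)=m+1=\lceil\log_2 s\rceil+1$.

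I do not expect a serious obstacle. The only points needing care are the explicit identification of rank-one Hermitian Gram matrices with selections of one root per conjugate pair — where positivity of $f$ is used to exclude real roots and to force $\deg p=d$ — and the bookkeeping that the degree of the gcd equals the number of coordinates on which the chosen cube vertices all agree, which is precisely what makes the subcube construction yield the stated bound.
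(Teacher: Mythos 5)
Your argument is correct and follows the paper's overall strategy: the same identification of rank-one Hermitian Gram matrices with choices of one root from each conjugate pair, and the same subcube construction for the logarithmic bound. The one step where you genuinely diverge is the rank inequality. The paper bounds the \emph{kernel} of $\sum_k v_kv_k^*$ from below: each root $x$ of $g=\gcd(p_1,\dots,p_s)$ gives a vector $\m_d(x)$ in the kernel, and these are linearly independent by the Vandermonde formula (which is where the hypothesis that $f$, hence $g$, has distinct roots is used). You instead bound the \emph{column span} from above, noting that $\rank\bigl(\sum_k v_kv_k^*\bigr)=\dim_\C\linspan\{p_1,\dots,p_s\}$ and that all $p_k$ lie in $g\cdot\C[x_1,x_2]_{d-e}$, of dimension $d-e+1$. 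Your version is slightly more elementary and does not need $g$ to be squarefree, so it would survive dropping the distinct-roots hypothesis for this particular inequality; the paper's kernel description has the advantage of exhibiting explicit kernel vectors $\m_d(x)$, which is the viewpoint exploited in Proposition~\ref{prop:HermFactor} and in the discussion of faces. For the last claim the paper routes through Proposition~\ref{prop:HermFactor} (factoring $f=g\ol{g}\cdot h$ and pulling back $s\le 2^e$ rank-one matrices from $\HS(h)$), whereas you apply your rank inequality directly to the subcube family; these are equivalent, and your bookkeeping that the gcd degree is at least the number of frozen coordinates is right.
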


\begin{proof} As in the proof of Proposition~\ref{prop:binaryforms}, rank-one matrices $vv^*\in \HS(f)$ correspond to 
factorizations of $f$ as a Hermitian square $p\overline{p}$, of which there are $2^d$. 

Let $v_1v_1^*, \hdots, v_sv_s^*\in \HS(f)$, write
$p_k = m_d^t v_k \in \C[x_1, x_2]_d$, and let $g$ be their greatest
common divisor. Every root $x$ of $g$ gives a vector $\m_d(x)$ in the
kernel of $\sum_k v_kv_k^*$. Since $f$ has distinct roots, so does
$g$. By the Vandermonde formula, the vectors $m_d(x)$ are linearly
independent, giving the desired rank bound.
For any $e \leq d$, we can factor $f = g\overline{g} \cdot h$, where
$h$ has degree $2e$.  The Hermitian Gram spectrahedron $\HS(h)$ then
contains $2^e$ matrices of rank one, which sum to a matrix of rank $e+1$.  
For $s \leq 2^e$, choose $s$ rank-one matrices in $\HS(h)$. Their images
$v_1v_1^*, \hdots, v_sv_s^*\in \HS(f)$ (under the linear isomorphism
of Proposition~\ref{prop:HermFactor}) have the desired property.
\end{proof}

This allows us to bound the rank of sums of rank-$2$ matrices in the \emph{real
symmetric} Gram spectrahedron of a binary form.  These bounds seem difficult to
understand purely in the language of symmetric Gram matrices.

\begin{Cor}\label{cor:GramRankSum}
Let $f \in \R[x_1, x_2]_{2d}$ be a positive bivariate polynomial with
distinct roots.
Suppose $A_1, \hdots, A_s \in \GS(f)$ have rank 2. 
We can write $A_k = \frac{1}{2}(v_kv_k^* +\overline{v_k v_k^*})$ for each $k=1, \hdots, s$, where $v_kv_k^* \in \HS(f)$. 
Then 
\[
{\rm rank}\left(\sum\nolimits_{k=1}^s A_k \right) \ \leq \ 2d + 2 - 2
\deg(\gcd(p_1, \hdots, p_s)) 
\]
where $p_k = m_d^t v_k \in \C[x_1, x_2]_d$ for each $k=1, \hdots, s$. 
For each $2\leq s\leq 2^{d-1}$, there are $s$ rank-two
matrices in $\HS(f)$ whose sum has rank at most $2\lceil \log_2(s) \rceil +2$.
\end{Cor}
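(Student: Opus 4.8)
The plan is to derive both parts from the Hermitian rank bound of Corollary~\ref{cor:HermGramRankSum} together with the elementary fact that passing to the real part at most doubles the rank. For the inequality, set $B=\sum_{k=1}^s v_kv_k^*\in\Herm_N$; since each $v_kv_k^*$ is a rank-one Hermitian Gram matrix of $f$ with $p_k=\m_d^tv_k$, Corollary~\ref{cor:HermGramRankSum} gives $\rank(B)\le d+1-\deg(\gcd(p_1,\dots,p_s))$. Because $A_k=\tfrac12(v_kv_k^*+\overline{v_kv_k^*})=\Re(v_kv_k^*)$, we have $\sum_kA_k=\Re(B)=\tfrac12(B+\overline B)$, and as $\rank(\overline B)=\rank(B)$ while rank is subadditive, $\rank(\sum_kA_k)\le 2\rank(B)$. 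Combining the two estimates gives $\rank(\sum_kA_k)\le 2d+2-2\deg(\gcd(p_1,\dots,p_s))$, the first claim.

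For the second claim, fix $s$ with $2\le s\le 2^{d-1}$ and put $m=\lceil\log_2 s\rceil$, so that $m\le d-1$ and $2^m\ge s$. Since $f$ is positive with distinct roots, all $2d$ of its roots are non-real and come in $d$ conjugate pairs; single out $m$ of these pairs, $\{w_j,\overline{w_j}\}$ for $j=1,\dots,m$, choose one root $u_l$ from each of the remaining $d-m$ pairs, and set $g_0=\sqrt{a}\,\prod_{l=1}^{d-m}(x_1-u_lx_2)$, where $a>0$ is the leading coefficient of $f$ in $x_1$; note $\deg g_0=d-m\ge 1$. For each $c=(c_1,\dots,c_m)\in\{0,1\}^m$ let $r_c=\prod_{j=1}^m(x_1-w_jx_2)^{1-c_j}(x_1-\overline{w_j}x_2)^{c_j}$ and $p_c=g_0r_c$, a binary form of degree $d$. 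A direct computation gives $p_c\overline{p_c}=g_0\overline{g_0}\cdot r_c\overline{r_c}=f$, so the coefficient vector $v_c$ of $p_c$ defines a rank-two matrix $A_c=\Re(v_cv_c^*)\in\GS(f)$ (cf.\ Proposition~\ref{prop:binaryforms}), and $g_0$ divides the representative $p_c$.

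The crux is that the $2^m$ matrices $A_c$ are pairwise distinct: $A_c=A_{c'}$ forces $p_c=p_{c'}$ or $p_c=\overline{p_{c'}}$, the first giving $c=c'$ and the second being impossible, since it would imply $g_0\mid\overline{p_{c'}}=\overline{g_0}\,\overline{r_{c'}}$, hence $g_0\mid\overline{r_{c'}}$ (because $g_0$ and $\overline{g_0}$ are coprime, their root sets $\{u_l\}$ and $\{\overline{u_l}\}$ being disjoint as $f$ has distinct roots), which is absurd since the roots of $\overline{r_{c'}}$ lie among the $w_j,\overline{w_j}$. Thus we may select any $s$ of the $A_c$; their representatives all share the factor $g_0$, so $\deg(\gcd)\ge d-m$, and the inequality proved above gives $\rank(\sum A_c)\le 2d+2-2(d-m)=2m+2=2\lceil\log_2 s\rceil+2$. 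I expect this distinctness claim to be the only delicate step, and it is exactly where the distinct-roots hypothesis enters: without it the $2^m$ forms $p_c$ need not yield $2^m$ distinct Gram matrices, one would be forced to enlarge $m$, and the sharp constant would be lost.
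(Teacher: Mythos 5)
Your proposal is correct and follows the route the paper intends: the corollary is stated there without its own proof as a direct consequence of Corollary~\ref{cor:HermGramRankSum}, and your first paragraph is exactly the expected argument ($\sum_k A_k = \tfrac12(B+\overline{B})$ with $\rank \le 2\rank(B)$ applied to the Hermitian bound). For the second claim, your explicit family $p_c = g_0 r_c$ is a hands-on instantiation of the paper's argument via Proposition~\ref{prop:HermFactor} (factor out a common Hermitian square $g_0\overline{g_0}$ and use the $2^m$ representations of the cofactor), and your careful check that the $2^m$ real matrices $A_c$ are pairwise distinct — which is precisely why the real statement requires $s\le 2^{d-1}$ rather than $2^d$ — is a detail the paper leaves implicit.
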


There are some choices inherent in this bound, as we can write $A_k$ both as $\frac{1}{2}(v_kv_k^* + \overline{v_k v_k^*})$ and $\frac{1}{2}(\overline{v_k v_k^*}+v_kv_k^*)$.  
To obtain the best bound, one should
maximize $\deg(\gcd(q_1, \hdots, q_s))$ over all choices of $q_k \in \{p_k, \overline{p_k}\}$ for $k=1, \hdots, s$.

\begin{Exm}
Consider a positive univariate polynomial $f\in \R[t]_{\leq 12}$ with
distinct roots. Its Gram spectrahedron $\GS(f)$ in  $\Sym_7$
has dimension $15$ and $32$ vertices of rank two. 
We can write $f = \prod_{j=1}^6((t-a_j)^2 + b_j^2)$  where $a_j, b_j\in \R$ for each $j=1, \hdots, 6$. 
One expects that a sum of four rank-2 matrices in $\Sym_7$ has full rank 7. However, by Corollary~\ref{cor:GramRankSum}
there are four rank-2 matrices in $\GS(f)$ whose sum has at most rank 6. To construct them, 
take the four representations of $f$ as a Hermitian square, $f = p_1\overline{p_1}=p_2\overline{p_2}=p_3\overline{p_3}= p_4\overline{p_4}$, given by 
\begin{align*}
p_1 = g \cdot  ( t-(a_5 + i b_5)) \cdot ( t-(a_6 + i b_6)) \ \ \ &p_2 = g \cdot  ( t-(a_5 + i b_5)) \cdot ( t-(a_6 - i b_6)) \\
p_3 = g \cdot  ( t-(a_5 - i b_5)) \cdot ( t-(a_6 + i b_6))  \ \ \  &p_4= g \cdot  ( t-(a_5 - i b_5))\cdot ( t-(a_6 - i b_6)),
\end{align*}
where $g = \prod_{j=1}^4 ( t-(a_j + i b_j))$. For $k = 1, \hdots, 4$,
let $v_k\in \C^7$ be the vector of coefficients of $p_k$. 
Each $v_kv_k^*$ is a rank-one Hermitian Gram matrix of $f$, and the sum $\sum_{i=1}^4 v_kv_k^*$  has rank at most 3. 
Also, $A_k = \frac{1}{2}(v_kv_k^* + \overline{v_kv_k^*})$ is a rank-two matrix in $\GS(f)$. The sum $\frac{1}{4}(A_1+A_2+A_3+A_4)$
is the sum of $\frac{1}{8}\sum_{i=1}^4 v_kv_k^*$ and $\frac{1}{8}\sum_{i=1}^4 \overline{v_kv_k^*}$, and thus has 
rank at most $6$. This means that the convex hull of $A_1,
A_2, A_3, A_4$ is contained in the boundary of $\GS(f)$.
\end{Exm}

\section{Sums of squares on products of simplices}\label{sec:Biquadratic}

We end with a short discussion of bi-quadratic forms, which provide fascinating connections between 
sums of squares and questions in topology and complexity. Bi-quadratic forms correspond to 
polynomials with Newton polytope $2P$ where $P$ is the product of two simplices $\Delta_{r-1}\times \Delta_{s-1}$. 
A particularly interesting instance is
\[f_{r,s} \  = \ (x_1^2+ \hdots + x_r^2) (y_1^2+ \hdots + y_s^2).\]
Expansion writes $f_{r,s}$ as a sum of $r\cdot s$ squares, but the sum-of-squares length of $f_{r,s}$ is often smaller 
than $r\cdot s$ and is not known in general. 
For example, $f_{2,2} = (x_1y_1 - x_2y_2)^2 + (x_2y_1+ x_1y_2)^2$. 
This comes from the multiplicativity of the complex norm: $|x| \cdot |y| = |x\cdot y|$ 
where $x=x_1 + ix_2$ and $y = y_1 + iy_2$. 

In general, the length of $f_{r,s}$ 
depends on the existence of \emph{normed bilinear maps}. 
Specifically, if $f_{r,s}$ is a sum of $m$ squares $p_1^2 + \hdots + p_m^2$, then $p = (p_1, \hdots,p_m)$ defines 
a bilinear map $p: \R^r\times \R^s \rightarrow \R^m$ satisfying $|p(x,y)| = |x|\cdot |y|$.
Conversely, a normed bilinear map $p: \R^r\times \R^s \rightarrow \R^m$ gives a representation of $f_{r,s}$ 
as a sum of $m$ squares. 
The existence of these maps is a classical problem in topology dating
back to Hurwitz \cite{Hurwitz}; 
see \cite{MR783137} for a survey of the long, rich history of these problems. 
Radon and Hurwitz  \cite{Hurwitz, Radon} characterized the existence of normed bilinear maps in the case $r=m$. 
The existence is understood for some other values of $(r,s,m)$ (see e.g. \cite{MR1760503}), but is open in general. 

A special case of interest is $r=s$. Let $\mathcal{S}(n)$ denote the length of $f_{n,n}$.
As for $n=2$, multiplicativity of norms on the quaternions and octonions can be used to show that 
$\mathcal{S}(4) = 4$ and $\mathcal{S}(8) = 8$. Hurwitz proved that $n< \mathcal{S}(n)$ for $n\not\in \{1,2,4,8\}$.  
One motivation for Hurwitz's study was to understand the value of $\mathcal{S}(16)$, which is still unknown.  
In \cite{MR1001712}, it is shown that $29 \leq \mathcal{S}(16) \leq 32$. 
More generally, it is known that $n \leq \mathcal{S}(n)\leq(2 -
o(1))n$ holds; see \cite{MR0144355, MR782229}.  

The polynomial $f_{n,n}$ is far from generic in the space of bi-quadratic forms. 
In \cite{HardSOS}, the authors construct an explicit family of bi-quadratic polynomials  with 
Newton polytope $2\Delta_{n-1}\times2\Delta_{n-1}$ whose length is at
least $n^2/16$. 
It would be interesting to understand the lengths of general bi-quadratic forms.

\begin{Ques}
What is the range of typical lengths for sums of squares with Newton polytope $\Delta_{n-1}\times\Delta_{n-1}$? 
What is the generic length over $\C$?
\end{Ques}

The lengths of bi-quadratic forms over $\C$ have applications in computational complexity. 
Hrube\v{s}, Wigderson, and Yehudayoff describe how complex sum-of-squares lengths provide lower bounds on the ``non-commutative circuit size'' of the $n\times n$ permanent \cite{HardSOS}.
Results like these demonstrate the wide ranging connections of sums of squares to other fields in mathematics 
and deserve further investigation.

\end{document}